\documentclass[10pt]{scrartcl}
\usepackage[OT1]{fontenc}
\usepackage{ulem}
\usepackage{amsmath}
\usepackage{amssymb}
\usepackage{amsthm}
\usepackage{todonotes}
\usepackage{mathtools}
\usepackage{mathpazo}
\usepackage[mathpazo]{flexisym}
\usepackage{breqn}
\usepackage{nomencl}
\usepackage{tablefootnote}
\usepackage{ulem}
\usepackage{hyperref}
\usepackage{cleveref}
\usepackage[most]{tcolorbox}
\usepackage{tikz-cd}
\usepackage{booktabs}
\usepackage{lipsum}
\usepackage{amsfonts}
\usepackage{graphicx}
\usepackage{epstopdf}
\usepackage{algorithmic}
\ifpdf
  \DeclareGraphicsExtensions{.eps,.pdf,.png,.jpg}
\else
  \DeclareGraphicsExtensions{.eps}
\fi

\usepackage{orcidlink}

\setkomafont{title}{\normalfont\Large\bfseries}
\setkomafont{author}{\normalfont\normalsize}

\usepackage[headsepline]{scrlayer-scrpage}
\title{High order biorthogonal functions for the discrete de Rham complex on simplices\footnote{Submitted to the editors \today.}\footnote{The work of the second author is funded by the Deutsche Forschungsgemeinschaft (DFG) under Germany’s Excellence Strategy within the Cluster of Excellence PhoenixD (EXC 2122, Project ID 390833453).}}

\author{Tim Haubold\orcidlink{0000-0001-8380-0684}\thanks{Corresponding Author, CERMICS, CNRS, ENPC, Institut Polytechnique de Paris, F-77455 Marne-la-Vall\'ee cedex 2, and
Centre Inria de Paris, 48 rue Barrault, CS, 61534 F-75647 Paris, France 
  (\href{mailto:tim.haubold@enpc.fr}{tim.haubold@enpc.fr}).}
\and Sven Beuchler\orcidlink{0000-0001-9411-8701} \footnotemark[4]\thanks{Institute for Applied Mathematics and Cluster of Excellence PhoenixD, Leibniz University Hannover, Welfengarten 1, D-30167 Hannover, Germany}
\and Joachim Sch\"oberl\orcidlink{0000-0002-1250-5087}\thanks{Institute for Analysis and Scientific Computing, TU Wien, Wiedner Hauptstraße 8–10, AT-1040 Wien, Austria}}
\date{}
\newtheorem{theorem}{Theorem}[section]
\newtheorem{corollary}[theorem]{Corollary}
\newtheorem{lemma}[theorem]{Lemma}
\newtheorem{remark}[theorem]{Remark}

\usepackage{amsopn}

\usepackage[most]{tcolorbox}
\usepackage{tikz-cd}
\tikzset{%
  symbol/.style={
    draw=none,
    every to/.append style={
      edge node={node [sloped, allow upside down, auto=false]{$#1$}}
    },
  },
}
\usepackage{algorithm}
\usepackage{aligned-overset}
\usepackage{algorithmic}
\usepackage{epsfig}
\usepackage{makecell}
\pgfmathsetmacro{\a}{-2}%
\pgfmathsetmacro{\b}{2}
\pgfmathsetmacro{\h}{3}%

\usetikzlibrary{calc}%

\newcommand{\mathsym}[1]{{}}
\newcommand{\unicode}[1]{{}}

\newcommand{\lhat}{\widehat{L}}
\newcommand{\phat}{\widehat{P}}

\newcommand{\Grad}{\nabla}
\DeclareMathOperator{\Curl}{curl}
\DeclareMathOperator{\Div}{div}

\newcommand{\dx}{\, \mathrm{d}x}

\newcommand{\dz}{\, \mathrm{d}z}

\newcommand{\R}{\mathbb{R}}

\newcommand{\ned}{\mathcal{N}}
\newcommand{\RT}{\mathcal{RT}}

\newcommand{\AntGrad}{\nabla\!\!\!\!\nabla}

\newcommand{\nedelec}{N\'ed\'elec}

\newcommand{\Po}{\mathcal{P}}
\newcommand{\checka}[1]{\textcolor{blue}{#1}}  

\usepackage{setspace}
\usepackage{geometry,yfonts}
\ifpdf\hypersetup{
  pdftitle={High order biorthogonal functions for the discrete De-Rham complex},
  pdfauthor={T. Haubold and S. Beuchler and J. Sch\"oberl}
}
\fi

\begin{document}

\maketitle
\begin{abstract}
It is well known that the choice of basis functions in $hp$-FEM heavily influences the stability and the computational cost in order to obtain an approximate solution.
For simplicial elements in two and three space dimensions, tensor-product-like basis functions built from Jacobi polynomials with different weights yield optimal properties with respect to condition number and sparsity. In this paper we construct such a high order basis for the Nédélec spaces and for the Raviart–Thomas and Brezzi–Douglas–Marini spaces, modifying existing definitions of Zaglmayr so that the functions belong to the Nédélec spaces of first kind and the Raviart-Thomas spaces. The bases are designed so that the Nédélec space of second kind and the BDM space are extensions of the respective other spaces.
In the second part of the paper we introduce biorthogonal basis functions for $H(\Div,\Omega)$ continuing previous research for $H^1(\Omega)$ and $H(\Curl,\Omega)$. These functions can be expressed in closed form as sums of tensor products of Jacobi polynomials, which allows for fast computation of the $L^2$-projection.\\
\medskip\noindent
\textbf{Key words.} finite elements, $hp$-FEM, biorthogonal basis functions, de Rham complex, Jacobi polynomials

\smallskip\noindent
\textbf{MSC codes.} 65N30, 65N22, 33C45
\end{abstract}

\section{Introduction}

It is well known that $hp$ finite element methods (FEM) often exhibit exponential convergence rates depending on the data, see e.g. \cite{szabo1991,schwab1998,melenk2002} and also \cite{karniadakis2013} for the related spectral element method
under the condition that the exact solution of the
underlying partial differential equation is (locally) sufficiently smooth.
A very important algorithmic ingredient of a $hp$-FEM method is the choice of basis functions
which directly influences the condition number of the involved matrices, see \cite{BabGriPit89,maitre1996}.

An alternative to nodal basis functions are modal basis functions, e.g. based on Bernstein polynomials,
see \cite{ainsworth2011,ainsworth2018}, 
or classical orthogonal polynomials, which go back to Szab\'{o} and Babu\v{s}ka, see e.g. \cite{szabo1991}. 
The latter choice yields overall better condition numbers. Furthermore, in the case of a polygonal domain and piecewise constant material functions, they yield  sparse element matrices and can be assembled in optimal complexity as well, see \cite{beuchler2023}, see also \cite{EibMel06}.
Basis functions for simplices are introduced by \cite{Dubiner,karniadakis2013}, see also the new approach
\cite{brubeck2026fastsolvershighorderfem} based on local eigenvalue solves.
Due to different variational formulations, one naturally derives the different function spaces $H^1,H(\Curl)$ and $H(\Div).$ These spaces are connected by an exact sequence, the \textit{de Rham complex}.
It describes the relation of the Sobolev spaces $H^1(\Omega), H(\Curl,\Omega)$ and $H(\Div,\Omega)$. In the following we assume that $\Omega \subset \R^3$ is a simply connected Lipschitz domain and that $\partial \Omega$ is connected. We introduce  
\begin{align*}
H^1(\Omega) &\coloneqq \lbrace u \in L^2(\Omega)\vert \Grad u \in L^2(\Omega) \rbrace,\\
H(\Curl,\Omega) &\coloneqq \lbrace u \in L^2(\Omega)\vert \Curl u \in L^2(\Omega) \rbrace,\\
H(\Div, \Omega) &\coloneqq \lbrace u \in L^2(\Omega)\vert \Div u \in L^2(\Omega) \rbrace.
\end{align*}
If $u \in H^1(\Omega)$, then $\Grad u \in H(\Curl, \Omega)$, and if $u \in H(\Curl, \Omega)$, then $\Curl u \in H(\Div, \Omega)$. Thus, the image of the respective differential operator is in the kernel of the next operator in this sequence. This cochain complex is called the de Rham complex.\\ 
These results in $3D$ are summarized by the following diagram
\begin{equation*}
   \R \quad\overset{i}{\longrightarrow}H^1(\Omega)\quad\overset{\Grad}{\longrightarrow} \quad H(\Curl, \Omega) \quad\overset{\Curl}{\longrightarrow} \quad H(\Div, \Omega)\quad\overset{\Div}{\longrightarrow}\quad L^2(\Omega) \quad \overset{\circ}{\longrightarrow} \lbrace 0 \rbrace,
\end{equation*}
where $i$ maps a real number to a constant function and $\circ$ is the zero map. This cochain is exact, i.e. $\operatorname{im}d_k = \operatorname{ker} d_{k+1}$, for all operators $d_k \in \lbrace i, \Grad, \Curl,\Div,\circ\rbrace$, see \cite[Prop. 16.14]{ern2021}.\\
The construction of an exact polynomial sequence on a simplex $T$ has been extensively discussed in the literature, and it is well known that two different exact sequences exist, see e.g. \cite{demkowicz2006,demkowicz2008,monk2003,ern2021}. One cochain contains the full polynomial spaces, and the second one uses special polynomial spaces, which are introduced in the following.\\
For $k\in\mathbb{N}$, let $\Po_k$ be the space of polynomials of maximal degree $k$. The vector valued polynomials
are denoted as ${[\Po_k]}^d$ for a space dimension $d=2,3$.
Following \cite{monk2003}, let
\begin{equation*}
    {[\tilde{\Po}_k]}^d = \lbrace\text{homogeneous polynomials of total degree exactly }k \text{ in } \R^d\rbrace
\end{equation*}
and 
$R_k = {[\Po_{k-1}]}^d \oplus S_k$, where $S_k = \lbrace p \in {[\tilde{\Po}_{k}]}^d \vert \vec{r} \cdot p = 0 \rbrace$, where $\vec{r} = {(x,y,(z))}^\top$. The space $R_k$ is called \nedelec~space of first kind.
In addition, we introduce the Raviart-Thomas space as
\[
\RT_{k}=\vec{r} \tilde{\Po}_{k-1} \oplus {[\Po_{k-1}]}^d
\]
or equivalently
\[
\RT_{k}=\vec{r} \Po_{k-1} + {[\Po_{k-1}]}^d.
\]
The cochain for the \nedelec~and Raviart-Thomas space and for the full polynomial spaces, i.e. the \nedelec~space of type two and BDM space are depicted in \Cref{tab:DeRham}. This cochain is exact, see e.g. \cite{monk2003,ern2021}.
\begin{table}[h]
    \centering
    \begin{tabular}{c c c c c c c c}
      \toprule
        & $H^1(T)$& &$H(\Curl,T)$& &$H(\Div,T)$& &$L^2(T)$\\
      \midrule
      First sequence &$\Po_{k}$ & $\xrightarrow{\Grad}$ & $R_k$ & $\xrightarrow{\Curl}$ & $\RT_{k}$ & $\xrightarrow{\Div}$ & $\Po_{k-1}$ \\
      Second sequence &$\Po_{k+2}$ & $\xrightarrow{\Grad}$ & ${[\Po_{k+1}]}^3$ & $\xrightarrow{\Curl}$ & ${[\Po_{k}]}^3$ & $\xrightarrow{\Div}$ & $\Po_{k-1}$ \\
      \bottomrule
    \end{tabular}
    \caption{Discrete de Rham Complex on a tetrahedron $T$}\label{tab:DeRham}
    \end{table}
Furthermore, the continuous and discrete de Rham complex fulfill a commuting property. This means there exist interpolation operators $I^g_T, I^c_T, I^{d}_T$ and $I^b_T$, such that 
\[
    \Grad I^g_T (u) = I^c_T \Grad (u), \quad \Curl I^c_T (u) = I^d_T \Curl (u),\quad \Div I^d_T (u) = I^b_T \Div (u), 
\]
where $I^b_T$ denotes the $L^2$ orthogonal projector onto $\mathcal{P}_k(T)$, see \cite{ern2021}.
An example of such interpolation operators is the projection-based interpolation operators introduced in \cite{demkowicz2005}, but see also \cite{bespalov2009,melenk2019}. For interpolation operators that are optimal in mesh-size $h$ and polynomial degree $k$, see \cite{ern2022,chaumont-frelet2024}. In most of these references an orthogonal projection is used to define the interpolation operator. \\
Different constructions of bases for the polynomial spaces of the discrete de Rham complex have been given. Basis functions based on orthogonal polynomials for the first sequence can be found, e.g. in~\cite{fuentes2015}, and for the second 
sequence in~\cite{ainsworth2001,zaglmayr2006, beuchler2013a, beuchler2012a, beuchler2012}. 

In this paper, we will modify the basis functions based on the work by~\cite{zaglmayr2006, beuchler2012}
such that the basis of the second sequence contains the first sequence. 

From an implementation point of view, this is a very desirable property, since we now only need to enable or disable certain degrees of freedom in our code to switch between spaces in the first or second sequence.
The basis functions \cite{zaglmayr2006} are divided into two groups. 
For $H(\Curl,\Omega)$ in $3D$, the first one is built by the application of the gradient to the basis
functions $u_{ijk}$ of the previous space, in this case $H^1$, where the indices $i,j,k$ stand for the polynomial degrees of the involved basis functions in the different directions.
The second one is built by a different linear combination of  the contributions 
$\frac{\partial}{\partial_{x_s}} u_{ijk}$, $s=1,2,3$.
In \cite{zaglmayr2006}, the factors in the linear combination are chosen independently of $i,j$ and $k$.
In this paper, we will show that a different choice of the linear combinations depending on $i,j,$ and $k$
will ensure that the basis of the second sequence contains the first sequence.
The same can also be done for the other spaces of the de Rham complex in two and three space dimensions.
The proof uses just a simple property of polynomials of degree $i$, namely \eqref{eq:DiffFormel}.
Therefore, this modification is applicable not only to basis functions as in \cite{beuchler2013} with certain 
Jacobi polynomials  for the involved one-dimensional auxiliary functions,
but also to very general polynomials. 
Therefore, the notation in \Cref{sec:newned} is kept quite general.

The second novelty of this paper is an extension of the construction of biorthogonal function for the space
$H(\Curl,\Omega)$ in \cite{haubold2024} to $H(\Div,\Omega)$.
For a short motivation we refer to the introduction of \cite{haubold2024} or to \cite{BanzSchroed15,BAMMER2025} 
with application to contact problems and elastoplasticity.
In this paper, the primal basis consists of the functions \cite{beuchler2012} which are orthogonal in the $H(\Div,\Omega)$
inner product. The construction principle is similar to \cite{haubold2024}.
However, the proof of all orthogonality relations is quite technical and involves also some
more special relations about Jacobi polynomials, which we will summarize in the following section.
We also refer to \cite{LamWohl07} for biorthogonal functions for Lagrange polynomials.

The paper is organized as follows. 
Section \ref{ch:Jac+Basis} starts with collection of properties of Jacobi and integrated Jacobi polynomials.
Some integral relations which are needed in the following are also proved.
In the last subsection, we summarize the construction of the element based basis functions 
for all spaces of the de Rham complex in two and three space dimensions.
Then in \Cref{sec:newned} and \Cref{ch:Div+Basis} we introduce new basis functions for the \nedelec-space of second kind and the BDM space, where the basis functions have a clear separation in functions of the \nedelec-space of first kind and gradients of $H^1$-functions, and $\RT$ functions and solenoidal fields, respectively.
In \Cref{sec:Biorthogonal}, we introduce functions, which are biorthogonal to the BDM-space defined at the end of \Cref{ch:Div+Basis}. 
\section{Preliminaries}\label{ch:Jac+Basis}
\subsection{Jacobi polynomials}
We start with the definition of the required orthogonal polynomials.
For $n\in\mathbb{N}$, $\alpha,\beta>-1$, let 
\begin{equation*}
    P_{n}^{(\alpha,\beta)}(x)=\frac{1}{2^n n!{(1-x)}^\alpha{(1+x)}^\beta} 
    \frac{\mathrm{d}^n}{\mathrm{d}x^n} {(1-x)}^\alpha{(1+x)}^\beta {(x^2-1)}^n
\end{equation*}
be the $n$-Jacobi polynomial with respect to the weight $\omega(x)={(1-x)}^\alpha{(1+x)}^\beta$.
Moreover, the integrated Jacobi polynomials are given by
\begin{equation*}
    \hat{P}_{n}^{\alpha}(x)=\int_{-1}^x  P_{n-1}^{(\alpha,0)} (t) \;\mathrm{d}t, \quad 
    \hat{P}_0^{\alpha}(x) = 1
\end{equation*}
for $n\geq 1$ and $\beta=0$.
In the special case $\alpha=\beta=0$, one obtains 
\begin{equation*}
    L_n(x)=P^{(0,0)}_n(x) \quad\textrm{and}\quad \hat{L}_n(x)=\hat{P}_n^0(x)
\end{equation*}
the Legendre and integrated Legendre polynomials, respectively.
The Jacobi polynomials form an orthogonal system in the weighted $L_{2,\omega}$ scalar product
\begin{equation}
\label{Orthogonality2}
I_{n,m}^{(\alpha,\beta)}=\int_{-1}^1 \omega(x) P_{n}^{(\alpha,\beta)}(x)P_{m}^{(\alpha,\beta)}(x) \;\mathrm{d}x
= \delta_{nm} \frac{{2}^{\alpha+\beta+1}}{(2n+\alpha+\beta+1)} 
\frac{\Gamma(n+\alpha+1)\Gamma(n+\beta+1)}{n! \Gamma(n+\alpha+\beta+1)},
\end{equation}
where $\Gamma(\cdot)$ denotes the Gamma function,
see e.g.~\cite{andrews1999}.
In this publication, the cases $\beta\in\{0,1\}$ are of special interest.
Then, the relation~\eqref{Orthogonality2} simplifies to
\begin{equation}
    \label{Orthogonality}
    I_{n,m}^{(\alpha,0)}=\delta_{nm} \frac{2^{\alpha+1}}{(2n+\alpha+1)} \quad\textrm{and}\quad
    I_{n,m}^{(\alpha,1)}=\delta_{nm} \frac{2^{\alpha+2}}{(2n+\alpha+2)} \frac{(n+1)}{(n+\alpha+1)},
\end{equation}
respectively.
Note that the integrated Legendre and Jacobi polynomials can be written as Jacobi polynomials by
using the relations, 
\begin{equation}\label{dual:IntLeg}
    \lhat_i(x) = \frac{(x^2-1)}{2(i-1)} P_{i-2}^{(1,1)}(x)
    \quad\textrm{and}\quad  \phat^{\alpha}_i(x) = \frac{(1+x)}{i} P^{(\alpha - 1,1)}_{i-1}(x), \alpha \geq 1
\end{equation}
see e.g.~\cite{szego1967}.
The identity
\begin{equation}\label{eq:JacDiff}
P_k^{(\alpha,1)}(x) = \frac{2}{(k+\alpha+1)} \frac{\mathrm{d}}{\mathrm{d}x} \, P_{k+1}^{(\alpha-1,0)}(x)
\end{equation}
is also very important, \cite[Eq. (4.21.7)]{szego1967}.
In the following, we will need the relation
\begin{equation}\label{eq:NedJac}
        \frac12(2+\alpha+\beta+2j)(x-1)P_j^{(\alpha+1,\beta)}(x)=(j+1)P_{j+1}^{(\alpha,\beta)}(x)
        -(j+1+\alpha)P_j^{(\alpha,\beta)}(x),\\
\end{equation}
see e.g.~\cite[Ch. 16, \S 138.16]{rainville1960}.
From~\cite[Eq.(19)]{beuchler2007} the relation
\begin{equation}\label{eq:Rec_BP}
    \eta P_{j-1}^{(\alpha,0)}(\eta) - j \phat_j^\alpha (\eta) = \frac{1}{2j+\alpha-2} {\left(-\alpha P_{j-1}^{(\alpha,0)}(\eta) + (2j-2) P_{j-2}^{(\alpha,0)}(\eta)\right)}
\end{equation}
is known. In the special case $\alpha=0$ one obtains
\begin{equation}\label{eq:Rec_BS}
    i \, \lhat_i(x) = x L_{i-1}(x) - L_{i-2}(x),
\end{equation}
see~\cite{beuchler2006}.
Other relations of interest are 
\begin{eqnarray}
        -(1-y)\left(y P^{(3,0)}_{j-1} (y)- j \phat^3_j(y)\right) + 2 \phat_j^3 (y)&=& \frac{4(j+1)}{2j+1} P_{j-1}^{(1,0)}(y) + \frac{2}{2j+1} P_j^{(1,0)}(y) \quad\textrm{and}\quad \label{eq:BeuPill17a}\\
        \phat^3_j(y)-\frac{1-y}{2} P^{(3,0)}_{j-1} (y)&=&  P^{(1,0)}_{j} (y), \label{eq:BeuPill17}
        \end{eqnarray}
see  \eqref{eq:Rec_BP} together with \cite[Eq. (18)]{beuchler2007}, and
\cite[Eq. (17)]{beuchler2007} for $\alpha=3$, respectively.

Finally, we mention Lemma 4.7 from~\cite{haubold2024}:
\begin{lemma}{}{}\label{lemma:IntJacLeg}
For $1 \leq i,k$, the relation
\begin{equation*}
\int_{-1}^{1} L_i(x) P_k^{(1,1)}(x)\dx = \begin{cases} \frac{4}{2+k} & \text{ if } k\geq i \text{ and } (k-i) \mod 2 = 0 \\0 & \text{else} \end{cases}
\end{equation*}
holds.
\end{lemma}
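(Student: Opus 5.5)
The plan is to turn the integral into a Legendre–Legendre integral using \eqref{eq:JacDiff}, and then to apply the expansion of $L_{k+1}'$ in Legendre polynomials together with the orthogonality relation \eqref{Orthogonality}.

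\emph{Step 1 (rewrite the Jacobi polynomial).} Apply \eqref{eq:JacDiff} with $\alpha=1$. Since $P^{(0,0)}_{k+1}=L_{k+1}$, this gives
\[
P_k^{(1,1)}(x)=\frac{2}{k+2}\,L_{k+1}'(x),
\qquad\text{hence}\qquad
\int_{-1}^1 L_i P_k^{(1,1)}\dx=\frac{2}{k+2}\int_{-1}^1 L_i\,L_{k+1}'\dx .
\]

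\emph{Step 2 (expand $L_{k+1}'$).} I will use the classical identity $L_{n+1}'-L_{n-1}'=(2n+1)L_n$. This is standard; if a justification is wanted, it follows from \eqref{eq:Rec_BS} differentiated, together with the three-term recurrence. Telescoping it gives
\[
L_{k+1}'=\sum_{\substack{0\le m\le k\\ m\equiv k \ (\mathrm{mod}\ 2)}}(2m+1)\,L_m ,
\]
where the sum ends at $L_0'=0$ or at $L_1'=L_0$, depending on the parity of $k$.

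\emph{Step 3 (use orthogonality).} Insert the expansion and use \eqref{Orthogonality} with $\alpha=0$, i.e. $\int_{-1}^1 L_iL_m\dx=\delta_{im}\frac{2}{2i+1}$. Only the term $m=i$ can survive. It appears exactly when $i\le k$ and $k-i$ is even, and then
\[
\int_{-1}^1 L_i P_k^{(1,1)}\dx=\frac{2}{k+2}\,(2i+1)\,\frac{2}{2i+1}=\frac{4}{k+2}.
\]
In every other case the integral vanishes. If $i>k$, this is simply because $L_i$ is orthogonal to the degree-$k$ polynomial $P_k^{(1,1)}$. If $i\le k$ with $k-i$ odd, the index $i$ does not occur in the sum, so the integral is again $0$.

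The main (mild) obstacle is justifying the derivative expansion in Step 2. If one prefers to avoid it, I would instead do a parity-and-integration-by-parts argument:
\[
\int_{-1}^1 L_iL_{k+1}'\dx=\bigl[L_iL_{k+1}\bigr]_{-1}^{1}-\int_{-1}^1 L_i'L_{k+1}\dx .
\]
For $i\le k$ the last integral is zero by orthogonality, since $\deg L_i'<k+1$. Using $L_n(\pm1)=(\pm1)^n$, the boundary term equals $1-(-1)^{i+k+1}$, which is $2$ for $k-i$ even and $0$ otherwise. This again gives $\frac{2}{k+2}\cdot 2=\frac{4}{k+2}$.
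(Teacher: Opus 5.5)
Your proof is correct in both versions. Three checks confirm it. First, \eqref{eq:JacDiff} with $\alpha=1$ gives $P_k^{(1,1)}=\frac{2}{k+2}L_{k+1}'$. Second, the telescoped expansion $L_{k+1}'=\sum_{m\le k,\ m\equiv k \bmod 2}(2m+1)L_m$ is right, and your sketched derivation from \eqref{eq:Rec_BS} does go through: differentiating it and combining with the differentiated three-term recurrence yields $L_{n+1}'-L_{n-1}'=(2n+1)L_n$. Third, in the integration-by-parts variant the boundary term $1-(-1)^{i+k+1}$ is computed correctly, and for $i\le k$ the interior integral vanishes by orthogonality.

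Note that the paper gives no proof of this lemma. It quotes the statement as Lemma 4.7 of \cite{haubold2024}, so there is no in-paper argument to match.

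The closest in-paper analogue is the proof of the lemma that follows Corollary \ref{lemma:ProdIntJacLeg}, on the integral of $(1-x)^\alpha P_j^{(\alpha,0)}P_k^{(\alpha,1)}$. It uses exactly your second route: write the $\beta=1$ Jacobi polynomial as a derivative via \eqref{eq:JacDiff}, integrate by parts, kill the remaining integral by orthogonality, and read the result off the endpoint values.

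Of your two routes, that one is the more economical, since it needs only \eqref{eq:JacDiff}, orthogonality and $L_n(\pm1)=(\pm1)^n$. Your first route costs the extra identity for $L_{k+1}'$, but in return it gives the full Legendre expansion $P_k^{(1,1)}=\frac{2}{k+2}\sum_{m\equiv k \bmod 2}(2m+1)L_m$. This makes transparent both the parity pattern and why the value $\frac{4}{k+2}$ is independent of $i$: the coefficient $2m+1$ cancels the norm $\frac{2}{2m+1}$. That $i$-independence is precisely what drives the cancellations in Corollary \ref{lemma:ProdIntJacLeg}.
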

This result has several consequences, which we will later use in this paper.
\begin{corollary}{}{}\label{lemma:ProdIntJacLeg}
    The relations
    \begin{equation*}
    \begin{aligned}
    \int_{-1}^1 -k P_{k-2}^{(1,1)}(x) L_{i-2}(x)+(k+1) P_{k-1}^{(1,1)}(x)L_{i-1}(x) \;\mathrm{d}x&=0, \quad \forall i,k\geq 2\\
    \int_{-1}^1 -k P_{k-2}^{(1,1)}(x) L_{i}(x)+(k+1) P_{k-1}^{(1,1)}(x)L_{i-1}(x) \;\mathrm{d}x&=0, \quad \forall k\geq 2,i \geq 1, i\neq k
    \end{aligned}
    \end{equation*}
    hold.
\end{corollary}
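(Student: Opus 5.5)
The plan is to evaluate each of the four integrals directly with \Cref{lemma:IntJacLeg}. Each one has the form $\int_{-1}^1 L_a(x)P_b^{(1,1)}(x)\dx$, multiplied by a weight that cancels the factor $\frac{4}{2+b}$ from the lemma. Concretely, $-k\cdot\frac{4}{2+(k-2)}=-4$ and $(k+1)\cdot\frac{4}{2+(k-1)}=4$. Both identities therefore reduce to checking that the two index conditions ``$b\geq a$ and $b-a$ even'' hold for both terms at once or fail for both at once. When they hold, the contributions are $-4+4=0$; when they fail, both terms vanish.

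One gap has to be closed first. \Cref{lemma:IntJacLeg} is stated for $i\geq 1$, but the Legendre index can be $0$: this happens for $L_{i-2}$ with $i=2$, and for $L_{i-1}$ with $i=1$. I would extend the lemma to $i=0$ using \eqref{eq:JacDiff} with $\alpha=1$, which gives $P_b^{(1,1)}=\frac{2}{b+2}L_{b+1}'$. Hence
\[
\int_{-1}^1 P_b^{(1,1)}\dx=\frac{2}{b+2}\bigl(L_{b+1}(1)-L_{b+1}(-1)\bigr)=\frac{2}{b+2}\bigl(1-(-1)^{b+1}\bigr).
\]
This equals $\frac{4}{b+2}$ for even $b$ and $0$ otherwise, which is exactly the formula of the lemma with $i=0$. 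All Jacobi indices that occur, namely $k-2$ and $k-1$, are nonnegative because $k\geq 2$.

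For the first identity, the term $-kP_{k-2}^{(1,1)}L_{i-2}$ is nonzero exactly when $k-2\geq i-2$ and $k-i$ is even. The term $(k+1)P_{k-1}^{(1,1)}L_{i-1}$ is nonzero exactly when $k-1\geq i-1$ and $k-i$ is even. These are the same condition, namely $k\geq i$ with $k\equiv i \pmod 2$, so the sum is $-4+4=0$ or $0+0=0$.

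For the second identity, the first term requires $k-2\geq i$ and $k-i$ even, while the second term requires $k\geq i$ and $k-i$ even. Under the parity condition these differ only in the single case $k=i$, which is excluded by hypothesis. So again both terms are present (sum $0$) or both are absent.

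The only real obstacle is the boundary case where the Legendre index is $0$. The extension via \eqref{eq:JacDiff} handles it, and the rest is index bookkeeping.
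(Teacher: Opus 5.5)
Your proof is correct and follows essentially the paper's route: both rest on \Cref{lemma:IntJacLeg} and the cancellation $-k\cdot\frac{4}{k}+(k+1)\cdot\frac{4}{k+1}=0$. The only difference is that the paper obtains the vanishing cases separately, from Legendre orthogonality ($i>k$) and from parity, while you read them off the lemma's index conditions. Your extension to Legendre index $0$ via \eqref{eq:JacDiff}, and your explicit check that the two index conditions in the second identity differ only at $k=i$, fill in details the paper leaves implicit. One further boundary case, Jacobi index $0$ with Legendre index $\geq 1$, is also formally excluded by the lemma's hypothesis $k\geq 1$, but it is trivial since $P_0^{(1,1)}=1$.
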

\begin{proof}
Let us start with the first relation.
    We distinguish between three cases. 
    \begin{enumerate}
        \item     If $i> k$, the orthogonality of the Legendre polynomials proves the result directly.
        \item Due to symmetry with respect to $x=0$ the integrals of both summands are zero for $(k-i) \mod 2 =1$.
        \item In the remaining case, Lemma \ref{lemma:IntJacLeg} gives
        \[
        \int_{-1}^1 -k P_{k-2}^{(1,1)}(x) L_{i-2}(x)+(k+1) P_{k-1}^{(1,1)}(x)L_{i-1}(x) \;\mathrm{d}x
        =-k \frac{4}{k}+(k+1)\frac{4}{k+1}=0.
        \]
        \end{enumerate}
This proves the assertion.
The second relation follows by the same arguments.
\end{proof}
\begin{remark}
    This is a simplification of a recursion formula in \cite[Corollary 7]{beuchler2023} by using the parity of the symmetric Jacobi polynomials. 
\end{remark}
\begin{lemma}
Let $\alpha>0$. Then,
    \[
        \int_{-1}^{1} (1-x)^\alpha P_j^{(\alpha,0)} (x)P_k^{(\alpha,1)}(x)\dx = \begin{cases}
            0 & \text{if}\quad j > k\\
            \frac{2^{\alpha+1}(-1)^{j+k}}{k+\alpha+1} & \text{if}\quad j \leq k
        \end{cases}  
    \]
\end{lemma}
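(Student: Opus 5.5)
The plan is to expand $P_k^{(\alpha,1)}$ in the orthogonal basis $\{P_m^{(\alpha,0)}\}_{m=0}^k$ and then use the weighted orthogonality \eqref{Orthogonality}. The case $j>k$ is immediate: $P_k^{(\alpha,1)}$ is a polynomial of degree $k<j$, and $P_j^{(\alpha,0)}$ is orthogonal with respect to the weight $(1-x)^\alpha$ to all polynomials of lower degree. So the integral vanishes.

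For $j\le k$, write $P_k^{(\alpha,1)}=\sum_{m=0}^k c_{k,m}P_m^{(\alpha,0)}$. By \eqref{Orthogonality} the integral equals
\[
c_{k,j}\,\frac{2^{\alpha+1}}{2j+\alpha+1}.
\]
The claim is therefore equivalent to the explicit connection formula
\begin{equation*}
P_k^{(\alpha,1)}(x)=\frac{1}{k+\alpha+1}\sum_{m=0}^{k}(-1)^{k-m}(2m+\alpha+1)\,P_m^{(\alpha,0)}(x),
\end{equation*}
which I will prove by induction on $k$. Set $S_k:=\sum_{m=0}^k(-1)^{k-m}(2m+\alpha+1)P_m^{(\alpha,0)}$. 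These partial sums telescope:
\[
S_k+S_{k-1}=(2k+\alpha+1)P_k^{(\alpha,0)}.
\]
The base case is $k=0$, where $P_0^{(\alpha,1)}=1=S_0/(\alpha+1)$. The induction step then reduces to the two-term identity
\begin{equation*}
(2k+\alpha+1)P_k^{(\alpha,0)}=(k+\alpha+1)P_k^{(\alpha,1)}+(k+\alpha)P_{k-1}^{(\alpha,1)},
\end{equation*}
which is the classical relation raising the parameter $\beta$ by one (the case $\beta=0$ of $(2n+\alpha+\beta+1)P_n^{(\alpha,\beta)}=(n+\alpha+\beta+1)P_n^{(\alpha,\beta+1)}+(n+\alpha)P_{n-1}^{(\alpha,\beta+1)}$, cf.\ \cite{szego1967}).

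Establishing this two-term identity is the main step; it can be cited, but I would also verify it directly. Both sides have degree $k$, so it suffices to show three things.
\begin{enumerate}
\item The right-hand side is orthogonal to every polynomial of degree $<k$ with respect to $(1-x)^\alpha$. Take a test polynomial of the form $(1+x)q$ with $\deg q\le k-2$. Then the two $P^{(\alpha,1)}$ terms are individually orthogonal, since their natural weight is $(1-x)^\alpha(1+x)$.
\item It remains to test against one polynomial not of that form; take the constant $1$. For this I would use \eqref{eq:JacDiff} with $\alpha$ replaced by $\alpha+1$, together with integration by parts, or alternatively the symmetric counterpart of \eqref{eq:NedJac} obtained by substituting $x\mapsto -x$.
\item The leading coefficients of the two sides agree. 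This is a routine check from the standard formula $\frac{\Gamma(2n+\alpha+\beta+1)}{2^n n!\,\Gamma(n+\alpha+\beta+1)}$.
\end{enumerate}
I expect item 2, the orthogonality against constants, to be the only mildly delicate computation.

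With the connection formula in hand, $c_{k,j}=(-1)^{k-j}(2j+\alpha+1)/(k+\alpha+1)$. Multiplying by $2^{\alpha+1}/(2j+\alpha+1)$ gives $\frac{2^{\alpha+1}(-1)^{j+k}}{k+\alpha+1}$, as stated.
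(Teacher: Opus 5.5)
Your proof is correct, but it takes a different route from the paper's.

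\emph{The paper's route.} The paper never expands $P_k^{(\alpha,1)}$. For $j\le k$ it substitutes $P_k^{(\alpha,1)}=\frac{2}{k+\alpha+1}\frac{\mathrm{d}}{\mathrm{d}x}P_{k+1}^{(\alpha-1,0)}$ from \eqref{eq:JacDiff} and integrates by parts once. Since $\frac{\mathrm{d}}{\mathrm{d}x}\bigl((1-x)^\alpha P_j^{(\alpha,0)}\bigr)=(1-x)^{\alpha-1}q_j$ with $q_j\in\Po_j$ and $j<k+1$, the remaining integral vanishes by orthogonality of $P_{k+1}^{(\alpha-1,0)}$. Only the boundary term at $x=-1$ survives, and $P_j^{(\alpha,0)}(-1)=(-1)^j$, $P_{k+1}^{(\alpha-1,0)}(-1)=(-1)^{k+1}$ give the value immediately. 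This is exactly where $\alpha>0$ is needed: the boundary term at $x=1$ must vanish and the weight $(1-x)^{\alpha-1}$ must be integrable.

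\emph{What your route buys.} You compute the full connection formula via the $\beta$-raising identity and a telescoping induction. That gives more information: all coefficients, and an explanation of why $j$ enters only through a sign. If you simply cite the two-term identity, your argument also works verbatim for every $\alpha>-1$, which the paper's integration by parts cannot reach. On the other hand, if you verify the identity yourself with \eqref{eq:JacDiff}, your item 2 is exactly the paper's computation in the special case $j=0$. The induction then becomes a detour around an argument that already handles every $j$ directly.

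\emph{Two small slips in item 2.}
First, the instance of \eqref{eq:JacDiff} you need is the one with the same $\alpha$, which writes $P_k^{(\alpha,1)}$ as a derivative of $P_{k+1}^{(\alpha-1,0)}$. Replacing $\alpha$ by $\alpha+1$ gives a statement about $P_k^{(\alpha+1,1)}$, which plays no role here.
Second, the mirrored form of \eqref{eq:NedJac} expresses $(1+x)P_n^{(\alpha,1)}$, not $P_n^{(\alpha,1)}$, through $P_n^{(\alpha,0)}$ and $P_{n+1}^{(\alpha,0)}$. So on its own it does not evaluate $\int_{-1}^1(1-x)^\alpha P_n^{(\alpha,1)}\dx$.

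A cleaner way to fix the last constant, valid for all $\alpha>-1$, uses your items 1 and 3. They show that the difference of the two sides has degree at most $k-1$ and is orthogonal to $\Po_{k-2}$ with respect to $(1-x)^\alpha(1+x)$, hence is a multiple of $P_{k-1}^{(\alpha,1)}$. That polynomial does not vanish at $x=1$. Both sides agree at $x=1$, by $P_n^{(a,b)}(1)=\frac{\Gamma(n+a+1)}{n!\,\Gamma(a+1)}$, so the multiple is zero.
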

\begin{proof}
The case $j>k$ follows from the orthogonality \eqref{Orthogonality} of the Jacobi polynomials.
Therefore, it suffices to consider the case $k \geq j$ only.
Then integration by parts together with \eqref{eq:JacDiff} is used to obtain
    \begin{align*}
                \int_{-1}^{1} (1-x)^\alpha P_j^{(\alpha,0)}(x) P_k^{(\alpha,1)}(x) \dx =& \frac{2}{(k+\alpha+1)}(1-x)^\alpha P_{j}^{(\alpha,0)}(x)P_{k+1}^{(\alpha-1,0)}(x){\bigg|}_{-1}^{1}\\ &- \frac{2}{(k+\alpha+1)}\int_{-1}^{1} \frac{\mathrm{d}}{\dx}\left((1-x)^\alpha P^{(\alpha,0)}_{j}(x)\right) P_{k+1}^{(\alpha-1,0)}(x).
    \end{align*}
    Note that 
    $\frac{\mathrm{d}}{\dx}\left((1-x)^\alpha P^{(\alpha,0)}_{j}(x)\right)=(1-x)^{\alpha-1} q_j(x)$
    with some polynomial $q_j\in\Po_j$. By the orthogonality \eqref{Orthogonality} 
    of the Jacobi polynomials for $k\geq j$, we get
    \[
     \int_{-1}^{1} (1-x)^\alpha P_j^{(\alpha,0)}(x) P_k^{(\alpha,1)}(x) \dx = \frac{2}{(k+\alpha+1)}(1-x)^\alpha P_{j}^{(\alpha,0)}(x)P_{k+1}^{(\alpha-1,0)}(x){\bigg|}_{-1}^{1}=\frac{2^{\alpha+1} (-1)^{j+k}}{k+\alpha+1}
    \]
    by observing that $P^{(\alpha,0)}_{j}(-1) =(-1)^{j}$.
    \end{proof}
    Using the result for $\alpha=1$ and $\alpha=2$ gives
    \begin{corollary} \label{folg}
    There holds the identity
    \[
    \int_{-1}^1 (1-x) P_{j}^{(1,0)}(x) P_{k}^{(1,1)}(x) \dx-\frac12\int_{-1}^1 (1-x)^2 P_{j-1}^{(2,0)}(x) P_{k-1}^{(2,1)}(x)\dx=0 \quad \textrm{for all } j,k\geq 1.
    \]
    \end{corollary}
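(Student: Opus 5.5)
Both integrals in the statement are instances of the preceding lemma, so the plan is simply to evaluate each one with that lemma, taking $\alpha=1$ for the first and $\alpha=2$ for the second, and then compare the two values. For the first integral we put $\alpha=1$ with indices $(j,k)$. For the second we put $\alpha=2$ with indices $(j-1,k-1)$; these are admissible because $j,k\geq 1$ gives $j-1,k-1\geq 0$. The index comparison is the same in both cases, since $j\le k$ holds exactly when $j-1\le k-1$. So we split into the two cases $j>k$ and $j\leq k$.

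If $j>k$, the lemma says both integrals vanish, and the difference is $0$.

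If $j\leq k$, the lemma with $\alpha=1$ gives
\[
\int_{-1}^1 (1-x) P_{j}^{(1,0)}(x) P_{k}^{(1,1)}(x)\dx=\frac{2^{2}(-1)^{j+k}}{k+2}.
\]
With $\alpha=2$ it gives
\[
\int_{-1}^1 (1-x)^2 P_{j-1}^{(2,0)}(x) P_{k-1}^{(2,1)}(x)\dx=\frac{2^{3}(-1)^{j+k-2}}{(k-1)+2+1}=\frac{8(-1)^{j+k}}{k+2}.
\]
Multiplying the second value by $\tfrac12$ gives $\tfrac{4(-1)^{j+k}}{k+2}$, which equals the first value, so the difference is $0$ here as well.

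The only points that need care are bookkeeping. First, the denominator of the shifted integral is $k+\alpha+1$ evaluated at the shifted index $k-1$ with $\alpha=2$, which gives $k+2$. Second, the sign is unchanged under the shift because $(-1)^{j+k-2}=(-1)^{j+k}$. Third, the lemma's hypothesis $\alpha>0$ holds for both $\alpha=1$ and $\alpha=2$. No step beyond these direct evaluations is needed.
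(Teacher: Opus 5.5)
Your proposal is correct and follows the paper's own proof: the paper obtains the corollary by applying the preceding lemma with $\alpha=1$ (indices $j,k$) and $\alpha=2$ (indices $j-1,k-1$). Your case split $j>k$ versus $j\le k$, together with the checks $(-1)^{j+k-2}=(-1)^{j+k}$ and $(k-1)+2+1=k+2$, just spells out the bookkeeping the paper leaves implicit.
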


\subsection{Basis functions}
Using some polynomials, the interior basis functions can be introduced.

We start with the triangular case with the reference triangle with the vertices $(-1,-1)$, $(1,-1)$ and $(0,1)$.
Let $q_i^x\in\Po_{i}\setminus\Po_{i-1}$, $i\geq 2$ and $q_j^{y,i}\in\Po_j\setminus\Po_{j-1}$, $j\geq 1$ be two families of polynomials of degrees $i$ and $j$, respectively, satisfying $q_i^x(\pm 1)=0$ and $q_j^{y,i}(-1)=0$.
The notation $q_j^{y,i}$ induces that the family of polynomials of degree $j$ may depend on $i$.

\paragraph{Basis function of $H(\Curl)$:}Following~\cite{zaglmayr2006}, the element based basis functions of $H(\Curl)$ 
on the reference triangle $\triangle$ are developed from the $H^1$ basis functions 
$u_{ij}^\triangle = f_i(x,y) g_{i,j}(x,y)$ where $f_i(x,y) = q_i^x\left(\frac{2x}{1-y}\right) {\left(\frac{1-y}{2}\right)}^i$ and $g_{i,j}(y) = q^y_{i,j}(y)$ by taking partial derivatives. More precisely, 
\begin{equation}\label{Dual:BasisTrig}
\begin{aligned}
    v^{\triangle, I}_{ij}(x,y) &= \Grad(u^{\triangle}_{ij}(x,y))= \Grad(f_i(x,y)) g_{i,j}(y) + f_i(x,y) \Grad(g_{i,j}(y)),\\
    v^{\triangle, II}_{ij}(x,y) &= \AntGrad_2 (u_{ij}^\triangle(x,y)) := \Grad(f_i(x,y)) g_{i,j}(y) - f_i(x,y) \Grad(g_{i,j}(y)),
     \textrm{ and}\\
    v^{\triangle,III}_{1j}(x,y) &= \Grad(f_1(x,y)) \phat^{3}_j(y).
\end{aligned}
\end{equation}
In~\cite{beuchler2012}, the choices $q_i^x=\lhat_i$ and $q_j^{y,i}=\phat_j^{\alpha_i}$ with the weight $\alpha_i=2i$ are used. 
This case leads to an optimal sparsity pattern of the mass and stiffness matrix
with respect to the number of nonzero entries.
Moreover, it is possible to introduce $L^2$ dual polynomial functions in closed formulas, \cite{haubold2024}.
The original definition of~\cite{zaglmayr2006} allows more general weights for the Jacobi polynomials in $y$-direction. With regards to sparsity the optimal choice is $\alpha_i= 2i-1$, but in this case the biorthogonal functions become rational.\\ 
In the following \Cref{sec:newned}, we will redefine the basis functions of type $II$. In doing so, we allow the possible disabling of the highest polynomial degree of the basis functions of type $I$, 
which yields the~\nedelec~basis of first kind.

For the tetrahedral case with the reference tetrahedron $\blacktriangle$ with vertices $(-1,-1,-1)$, $(1,-1,-1)$, $(0,1,-1)$ and $(0,0,1)$, the construction principle is similar.
In addition to $q_i^x$ and $q_j^{y,i}$, let $q_k^{z,i,j}\in\Po_k\setminus\Po_{k-1}$, $k\geq 1$ be a family
of polynomials satisfying $q_k^{z,i,j}(-1)=0$. Then, the element based basis functions for $H^1$ on the reference element
$\blacktriangle$ are given by $u_{ijk}^\blacktriangle=f_i g_{i,j} h_{i,j,k}$, where
 \begin{equation}\label{eq:defaux3d}
    f_i(x,y,z) = q_i^x{\left(\frac{4x}{1-2y-z}\right)} {\left(\frac{1-2y-z}{4}\right)}^i, \quad
    g_{i,j}(y,z) = q_j^{y,i}{\left(\frac{2y}{1-z}\right)} {\left(\frac{1-z}{2}\right)}^j, \textrm{ and }
    h_{i,j,k} (z)= q_k^{z,i,j}(z).
    \end{equation}
Similar to the two-dimensional case, 
the element based basis functions of $H(\Curl)$ on the tetrahedron $\blacktriangle$ are given by
\begin{equation}\label{Dual:TetBasis}
    \begin{aligned}
    v_{ijk}^{\blacktriangle,I} &= \Grad(f_i g_{i,j} h_{i,j,k}) = \Grad (f_i) g_{i,j} h_{i,j,k} + f_i \Grad(g_{i,j}) h_{i,j,k} + f_i g_{i,j} \Grad(h_{i,j,k}),\\
    v_{ijk}^{\blacktriangle,II} &= \Grad (f_i) g_{i,j} h_{i,j,k} - f_i \Grad(g_{i,j}) h_{i,j,k} + f_i g_{i,j} \Grad(h_{i,j,k}),\\
    v_{ijk}^{\blacktriangle,III} &= \Grad (f_i) g_{i,j} h_{i,j,k} + f_i \Grad(g_{i,j}) h_{i,j,k} - f_i g_{i,j} \Grad(h_{i,j,k}),\\
    v_{1jk}^{\blacktriangle,IV} &= v^{\ned_0}_{[1,2]} g_{1,j} h_{1,j,k},
    \end{aligned}
\end{equation}
for $2 \leq i, 1\leq j,k$, and $i+j+k\leq p$.
Here $v_{[1,2]}^{\ned_0}$ is the lowest order \nedelec~function of the first kind, based on the edge from vertex $1$ to $2$.
In~\cite{beuchler2012}, the choices $q_i^x=\lhat_i$, $q_j^{y,i}=\phat_j^{\alpha_i}$ and $q_{k}^{z,i,j}=\phat_{k}^{\beta_{i,j}}$
with the weights $\alpha_i=2i$ and $\beta_{i,j}=2i+2j$ for the Jacobi polynomials
are mentioned for sparsity optimality and condition number of the involved matrices. 
However also other weights in the Jacobi polynomials are also possible in the construction in~\cite{zaglmayr2006}.

\paragraph{Basis functions of $H(\operatorname{div})$}With~\eqref{Dual:TetBasis}, the divergence free element based basis functions on the tetrahedron $\blacktriangle$
are obtained by taking the curl of the basis functions of types $II$, $III$ and $IV$:
\begin{equation}\label{Div:TetBasis}
    \begin{aligned}
    w_{ijk}^{\blacktriangle,II}&:= \Curl v_{ijk}^{II} &=2 h_{i,j,k} \nabla g_{i,j} \times \nabla f_i- 2 f_i \nabla h_{i,j,k} \times \nabla g_{i,j}\\
    w_{ijk}^{\blacktriangle,III}&:=\Curl v_{ijk}^{III} &= 2 g_{i,j} \nabla h_{i,j,k} \times \nabla f_i+2 f_i \nabla h_{i,j,k} \times \nabla g_{i,j}\\
    w_{1jk}^{\blacktriangle,IV}&:= \Curl v_{1jk}^{IV} &= \Curl {\left(v^{\ned_0}_{[1,2]} g_{1,j} h_{1,j,k}\right)},
    \end{aligned}
\end{equation}
\sloppy
In~\cite{zaglmayr2006}, they are completed by the functions
\begin{equation}\label{Div:TetBasis2}
    \begin{aligned}
     w_{ijk}^{\blacktriangle,I}&:=2 f_i \nabla h_{i,j,k} \times \nabla g_{i,j},   &\quad &
    w_{1jk}^{\blacktriangle,I}&:= g_{1,j} \left(v^{\ned_0}_{[1,2]} \times \nabla  h_{1,j,k}\right)
    &\quad\textrm{and}\quad
        w_{10k}^{\blacktriangle,I}&:= 4 w_1^{\blacktriangle,F_1}(x,y,z) \widehat{P}^3_k(z),
    \end{aligned}
\end{equation}
where $w_1$ denotes the Raviart-Thomas functions associated to the bottom face $F_1$ of $\blacktriangle$. 
Removing the highest polynomial degree of the basis functions of type $II,III$ and $IV$ reduces the BDM-space to the RT-space.


\section{New \texorpdfstring{$H(\Curl)$}{} basis functions on a triangle and a tetrahedron}\label{sec:newned}
By deriving the \nedelec-functions of the first kind, we will see that the basis functions $v_{ij}^{\triangle,II}(x,y)$ on the triangle are not \nedelec-conforming. In this section, we will modify them in order to obtain a \nedelec-conforming one.
This principle uses the following observation for polynomials of exactly degree $i$, namely
\begin{equation}\label{eq:DiffFormel}
    \chi q'_i(\chi)-i q_i(\chi)=\tilde{Q}_{i-1}(\chi), \quad \forall q_i\in\Po_{i},
\end{equation}
where $\tilde{Q}_{i-1} $ is some polynomial of maximal degree $i-1$.
In the case of integrated Jacobi or Legendre polynomials, the remainder $\tilde{Q}_{i-1}$ can be expressed in
terms of Jacobi polynomials, see~\eqref{eq:Rec_BP},~\eqref{eq:Rec_BS}.
In the case of the integrated Legendre polynomials it is a polynomial of degree $i-2$.

Our constructions for the $H(\Curl)$ conforming functions follow \nedelec's construction. 
By using a modified gradient  operator in \eqref{Dual:TetBasis}, we define non-curl-free
functions and enrich this space by the gradients. By using this construction on all polynomial degrees, our basis stays hierarchical. But the interior functions, defined by this modified gradient operator, do not split into a homogeneous and a non-homogeneous part, as needed by the \nedelec~space of first kind. 
\subsection{Triangle}
For the triangle we redefine our type $v_{ij}^{\triangle,II}$ functions in~\eqref{Dual:BasisTrig} 
with general polynomials $q_i^x$ and $q_j^{y,i}$ in the auxiliary functions $f_i$ and $g_{ij}$ by
\begin{equation}\label{eq:dualnedtrig}
v_{ij}^{\triangle,\ned}(x,y) = j(\Grad f_{i}(x,y))g_{i,j}(x,y) - i f_i(x,y) \Grad g_{i,j}(x,y).\end{equation}
Those are in the first \nedelec~space $R_k$, as can be seen in the following theorem:
\begin{theorem}
Let us assume that $q_i^x$ and $q_j^{y,i}$ satisfy \eqref{eq:DiffFormel} and let 
$v_{ij}^{\triangle,\ned}(x,y)$ be defined by \eqref{eq:dualnedtrig}.
If $i+j \leq p$, with $i\geq 2, j\geq1$,
then     $v_{ij}^{\triangle,\ned}(x,y) \in R_{p-1}$.
\end{theorem}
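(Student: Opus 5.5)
The plan is to use the standard characterization of the \nedelec~space of first kind:
\[
v \in R_{p-1} \iff v \in {[\Po_{p-1}]}^2 \text{ and the homogeneous part } v^{(p-1)} \text{ of degree } p-1 \text{ satisfies } \vec r\cdot v^{(p-1)}=0 .
\]
This holds because every $v\in{[\Po_{p-1}]}^2$ splits uniquely as $v=(v-v^{(p-1)})+v^{(p-1)}$ with $v-v^{(p-1)}\in{[\Po_{p-2}]}^2$, and $S_{p-1}$ consists exactly of the homogeneous fields of degree $p-1$ orthogonal to $\vec r$. So I only need two facts: a degree bound, and a statement about the top-degree part of $v_{ij}^{\triangle,\ned}$.

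\textbf{Degree bound.} Since $q_i^x\in\Po_i$, the collapsed function $f_i(x,y)=q_i^x\bigl(\tfrac{2x}{1-y}\bigr)\bigl(\tfrac{1-y}{2}\bigr)^i$ is a genuine polynomial of total degree at most $i$. Indeed, writing $q_i^x(\chi)=\sum_{l\le i} c_l\chi^l$ gives
\[
f_i=\sum_{l\le i} c_l\, x^l\Bigl(\tfrac{1-y}{2}\Bigr)^{i-l}.
\]
Similarly $g_{i,j}=q_j^{y,i}(y)$ has degree at most $j$. Hence both terms of \eqref{eq:dualnedtrig} lie in ${[\Po_{i+j-1}]}^2\subset{[\Po_{p-1}]}^2$. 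If $i+j-1<p-1$, then $v_{ij}^{\triangle,\ned}\in{[\Po_{p-2}]}^2\subset R_{p-1}$ and nothing more is needed. It therefore remains to treat $i+j=p$ and to show that the degree-$(i+j-1)$ part of $v_{ij}^{\triangle,\ned}$ is orthogonal to $\vec r$.

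\textbf{Top-degree part.} Let $F$ and $G$ be the homogeneous top-degree parts of $f_i$ and $g_{i,j}$, of degrees $i$ and $j$; here $F=\sum_l c_l x^l(-y/2)^{i-l}$ and $G=d_j y^j$. The degree-$(i+j-1)$ part of $v_{ij}^{\triangle,\ned}$ is
\[
j\,(\Grad F)\,G-i\,F\,\Grad G,
\]
because gradients of lower-order parts only contribute to lower degrees. Euler's identity for homogeneous polynomials, $\vec r\cdot\Grad F=iF$ and $\vec r\cdot\Grad G=jG$, then gives
\[
\vec r\cdot\bigl(j\,G\,\Grad F-i\,F\,\Grad G\bigr)=j\,i\,FG-i\,j\,FG=0 .
\]
This is precisely why the weights $j$ and $i$ appear in \eqref{eq:dualnedtrig}, in place of the $\pm1$ of type $II$ in \eqref{Dual:BasisTrig}.

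To stay closer to the paper's framing, the same cancellation can be written directly with \eqref{eq:DiffFormel}. Compute $\vec r\cdot\Grad f_i=i f_i-\tilde Q_{i-1}\bigl(\tfrac{2x}{1-y}\bigr)\bigl(\tfrac{1-y}{2}\bigr)^{i-1}$ using the chain rule in the collapsed coordinate, and $\vec r\cdot\Grad g_{i,j}=y\,(q_j^{y,i})'(y)=j g_{i,j}+\tilde Q_{j-1}(y)$. Then $\vec r\cdot v_{ij}^{\triangle,\ned}$ equals $ij f_ig_{i,j}-ij f_ig_{i,j}$ plus a polynomial of degree at most $i+j-1$. So the degree-$(i+j)$ part of $\vec r\cdot v$ vanishes, which is again the required condition.

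\textbf{Main obstacle.} I expect the delicate step to be this chain-rule computation of $\vec r\cdot\Grad f_i$ in the collapsed coordinate $\chi=2x/(1-y)$. There, $x\partial_x+y\partial_y$ acting on $(1-y)^i$ produces terms with $(1-y)^{i-1}$ that must be verified to have degree at most $i-1$ after combining with $q_i^{x\prime}$. The Euler-identity route avoids this bookkeeping, so I would present that argument as the main proof and mention \eqref{eq:DiffFormel} as the underlying one-dimensional reason.
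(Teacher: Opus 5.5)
Your proof is correct, but for the triangle it takes a different route from the paper.

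The paper works in the collapsed coordinate $\eta=\frac{2x}{1-y}$. It computes $\Grad f_i$ and $\Grad g_{i,j}$ explicitly, rewrites $j\,q_j^{y,i}(y)$ and $i f_i$ via \eqref{eq:DiffFormel}, and subtracts to obtain the explicit splitting
\[
i f_i\Grad g_{i,j}-j g_{i,j}\Grad f_i=(q_i^x)'(\eta)\left(\tfrac{1-y}{2}\right)^{i-1}(q_j^{y,i})'(y)\begin{pmatrix} y\\ -x\end{pmatrix}+r,\qquad r\in[\Po_{i+j-2}]^2 .
\]
This places $v_{ij}^{\triangle,\ned}$ directly in the two-dimensional form $[\Po_{p-2}]^2+\Po_{p-2}\,(y,-x)^\top$ of $R_{p-1}$.

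You instead characterize $R_{p-1}$ through the top homogeneous part and annihilate $\vec r\cdot(\cdot)$ with Euler's identity. This is essentially the strategy the paper uses later for the tetrahedron, in the lemmas computing $\vec r\cdot(\Grad f_i)\,g_{i,j}h_{i,j,k}$ and its companions. There, Euler's identity replaces the coordinate computations.

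Your version has three advantages:
\begin{itemize}
\item it is shorter;
\item it is dimension-independent, giving the condition $c_1 i+c_2 j+c_3 k=0$ immediately;
\item it shows that only $f_i\in\Po_i$ and $g_{i,j}\in\Po_j$ are needed, since \eqref{eq:DiffFormel} holds automatically for every $q_i\in\Po_i$.
\end{itemize}
The paper's computation, in exchange, gives an explicit formula for the coefficient of $(y,-x)^\top$, analogous to the explicit $p_1$ in Theorem~\ref{main:Hdiv}.

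One harmless slip in your optional paragraph: the correct identity is $\vec r\cdot\Grad f_i=i f_i+\tfrac12\,\tilde Q_{i-1}(\eta)\left(\tfrac{1-y}{2}\right)^{i-1}$, with coefficient $+\tfrac12$ rather than $-1$. Only the degree of the remainder matters, so your conclusion is unaffected.
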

\begin{proof}
We compute the gradients of $f_i$ and $g_{ij}$.
This gives 
\[
j\Grad f_i=\begin{pmatrix} j (q_i^x)'{\left(\frac{2x}{1-y}\right)} {\left(\frac{1-y}{2}\right)}^{i-1} \\
-\frac{j}{2}  (\tilde{Q}_{i-1}^x){\left(\frac{2x}{1-y}\right)} {\left(\frac{1-y}{2}\right)}^{i-1}
\end{pmatrix}
\quad
\textrm{and}
\quad
i \Grad g_{ij}=\begin{pmatrix}
    0 \\ i (q_j^{y,i})'(y)
\end{pmatrix}
\]
by using the chain rule and~\eqref{eq:DiffFormel} for $q_i^x$.
Hence, 
\begin{equation}\label{eq:2dT1}
    \begin{aligned}
j g_{i,j}\Grad f_i &=\begin{pmatrix}  (q_i^x)'{\left(\frac{2x}{1-y}\right)} {\left(\frac{1-y}{2}\right)}^{i-1} \\
\frac{1}{2}  (\tilde{Q}_{i-1}^x){\left(\frac{2x}{1-y}\right)} {\left(\frac{1-y}{2}\right)}^{i-1}
\end{pmatrix} j q_j^{y,i}(y) \overset{\eqref{eq:DiffFormel}}{=} 
\begin{pmatrix}  (q_i^x)'{\left(\frac{2x}{1-y}\right)} {\left(\frac{1-y}{2}\right)}^{i-1} \\
\frac{1}{2}  (\tilde{Q}_{i-1}^x){\left(\frac{2x}{1-y}\right)} {{\left(\frac{1-y}{2}\right)}}^{i-1}
\end{pmatrix} (y (q_j^{y,i})'(y)-\tilde{Q}_{j-1}^{y,i}(y))\\
&= \begin{pmatrix}  (q_i^x)'{\left(\frac{2x}{1-y}\right)} \\
\frac{1}{2}  (\tilde{Q}_{i-1}^x){\left(\frac{2x}{1-y}\right)}
\end{pmatrix} {\left(\frac{1-y}{2}\right)}^{i-1} y (q_j^{y,i})'(y)  + r_{i+j-2}
\end{aligned}
\end{equation}
with $r_{i+j-2}\in{[\Po_{i+j-2}]}^2$.
Using~\eqref{eq:DiffFormel} again
\begin{equation*}
    \begin{aligned}
        i f_i&=i q_{i}^x{\left(\frac{2x}{1-y} \right)} {\left( \frac{1-y}{2}\right)}^i
        \overset{\eqref{eq:DiffFormel}}{=} {\left( \frac{1-y}{2}\right)}^i
        {\left(\frac{2x}{1-y}  (q_{i}^x)'{\left(\frac{2x}{1-y} \right)}-\tilde{Q}_{i-1}^x {\left(\frac{2x}{1-y} \right)}  \right)}\\
        &= {\left( \frac{1-y}{2}\right)}^{i-1} 
        {\left(  x (q_{i}^x)'{\left(\frac{2x}{1-y} \right)}-\frac{1-y}{2}\tilde{Q}_{i-1}^x {\left(\frac{2x}{1-y}   \right)}\right)}.
    \end{aligned}
\end{equation*}
This gives
\begin{equation}\label{eq:2dT2}
    \begin{aligned}
        i f_i \Grad g_{ij}=  {\left( \frac{1-y}{2}\right)}^{i-1} 
        {\left(  x (q_{i}^x)'{\left(\frac{2x}{1-y} \right)}-\frac{1-y}{2}\tilde{Q}_{i-1}^x {\left(\frac{2x}{1-y}   \right)}\right)} (q_j^{y,i})'(y) \begin{pmatrix}0 \\ 1 \end{pmatrix}
    \end{aligned}
\end{equation}
Using~\eqref{eq:2dT1} and~\eqref{eq:2dT2}
\begin{equation*}
    \begin{aligned}
        i f_i \Grad g_{ij}-j g_{i,j}\Grad f_i&=\underbrace{(q_{i}^x)'{\left(\frac{2x}{1-y} \right)}
        {\left(\frac{1-y}{2}\right)}^{i-1}(q_j^{y,i})'(y)}_{\in\Po_{i+j-2}} \begin{pmatrix} y \\ -x\end{pmatrix} \\
        &\quad +\underbrace{r_{i+j-2}-
        \frac12 \tilde{Q}_{i-1}^x{\left( \frac{2x}{1-y}\right)} {\left(\frac{1-y}{2}\right)}^{i-1} (q_j^{y,i})'(y) 
        \begin{pmatrix}0 \\ 1 \end{pmatrix}}_{\in{[\Po_{i+j-2}]}^2}.
       \end{aligned}
\end{equation*}
This proves the theorem.
\end{proof}
\subsection{Tetrahedron}
The same property holds for the three-dimensional case on the tetrahedron by using the basis functions~\eqref{Dual:TetBasis}.
Instead of proving everything in one go, as for the $2D$ case, we apply a more constructive approach. The idea is very simple: We first investigate the influence of $\vec{r} \cdot$ on the gradient of our auxiliary functions $f,g,h$, and then recombine those, such that the highest polynomial order vanishes under multiplication with $\vec{r}$. 
In the following, let us assume that the functions $q_i^x$, $q_j^{y,i}$ and $q_k^{z,i,j}$ satisfy \eqref{eq:DiffFormel}. Moreover, let $\vec{r}=(x\;y\;z)^\top$.
Therefore, we first prove the following auxiliary lemmas:
\begin{lemma}\label{cor:parta}
For $i+j+k\leq p$, we have the relation
\begin{equation*}
    \vec{r} \cdot (\Grad f_i(x,y,z)) g_{i,j}(x,y,z) h_{i,j,k}(x,y,z) = i \, u_{ijk}^{\blacktriangle}(x,y,z) + \underbrace{p(x,y,z)}_{\in \Po_{p-1}}.
\end{equation*}
\end{lemma}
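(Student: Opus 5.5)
We compute $\Grad f_i$ explicitly with the chain rule and then use \eqref{eq:DiffFormel} for $q_i^x$ to isolate the top-degree part of $\vec{r}\cdot\Grad f_i$. Write $s:=\frac{1-2y-z}{4}$ and $\chi:=\frac{4x}{1-2y-z}=x/s$, so that $f_i=q_i^x(\chi)s^i$.

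\textbf{Step 1: the gradient of $f_i$.} The derivatives of $s$ are $\partial_x s=0$, $\partial_y s=-\frac12$ and $\partial_z s=-\frac14$. Those of $\chi$ are $\partial_x\chi=1/s$, $\partial_y\chi=\frac{\chi}{2s}$ and $\partial_z\chi=\frac{\chi}{4s}$. Hence
\[
\Grad f_i = s^{i-1}\begin{pmatrix} (q_i^x)'(\chi)\\ \tfrac12\big(\chi (q_i^x)'(\chi)-i q_i^x(\chi)\big)\\ \tfrac14\big(\chi (q_i^x)'(\chi)-i q_i^x(\chi)\big)\end{pmatrix}
= s^{i-1}\begin{pmatrix} (q_i^x)'(\chi)\\ \tfrac12\tilde{Q}_{i-1}^x(\chi)\\ \tfrac14\tilde{Q}_{i-1}^x(\chi)\end{pmatrix}.
\]
This is the three-dimensional analogue of the gradient computed in the triangle proof.

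\textbf{Step 2: contract with $\vec{r}$.} Taking the dot product with $\vec{r}=(x,y,z)^\top$ gives
\[
\vec{r}\cdot\Grad f_i = s^{i-1}\Big(x (q_i^x)'(\chi)+\tfrac{2y+z}{4}\tilde{Q}_{i-1}^x(\chi)\Big).
\]
Since $x=\chi s$ and $\frac{2y+z}{4}=\frac14-s$, this equals
\[
s^{i}\big(\chi (q_i^x)'(\chi)-\tilde{Q}_{i-1}^x(\chi)\big)+\tfrac14 s^{i-1}\tilde{Q}_{i-1}^x(\chi).
\]
Applying \eqref{eq:DiffFormel} once more, $\chi (q_i^x)'(\chi)-\tilde{Q}_{i-1}^x(\chi)=i\,q_i^x(\chi)$. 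Therefore
\[
\vec{r}\cdot\Grad f_i = i f_i + \tfrac14 s^{i-1}\tilde{Q}_{i-1}^x(\chi).
\]

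\textbf{Step 3: the remainder is a polynomial of lower degree.} Because $\tilde{Q}_{i-1}^x$ has degree at most $i-1$, the function $s^{i-1}\tilde{Q}_{i-1}^x(x/s)$ is a polynomial in $(x,y,z)$ of total degree at most $i-1$. This is the same Duffy-type homogenization argument that shows $f_i$ itself is a polynomial of degree $i$. Likewise, $g_{i,j}$ is a polynomial of degree $j$ by the analogous argument in $\frac{2y}{1-z}$, and $h_{i,j,k}$ is a polynomial of degree $k$.

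\textbf{Step 4: conclude.} Multiplying the identity of Step 2 by $g_{i,j}h_{i,j,k}$ gives
\[
\vec{r}\cdot(\Grad f_i)\,g_{i,j}h_{i,j,k} = i\,u_{ijk}^\blacktriangle + p,
\]
where the remainder $p$ has degree at most $i-1+j+k\le p-1$. This is the claim.

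The only delicate point is the bookkeeping in Step 2. The $y$- and $z$-components of $\Grad f_i$ must combine with $2y+z$ into exactly $\frac14-s$, so that the top-degree part reassembles into $i f_i$ through \eqref{eq:DiffFormel}. Everything else is routine.
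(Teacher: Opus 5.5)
Your proof is correct and is essentially the paper's argument. You compute $\Grad f_i = s^{i-1}\bigl((q_i^x)'(\eta),\tfrac12\tilde{Q}_{i-1}^x(\eta),\tfrac14\tilde{Q}_{i-1}^x(\eta)\bigr)^\top$ via \eqref{eq:DiffFormel}, contract with $\vec{r}$, and use $x=\eta s$, $\frac{2y+z}{4}=\frac14-s$ and \eqref{eq:DiffFormel} again to get $\vec{r}\cdot\Grad f_i = i f_i + \frac14 s^{i-1}\tilde{Q}_{i-1}^x(\eta)$, as the paper does. Your explicit degree count after multiplying by $g_{i,j}h_{i,j,k}$ just spells out the ``it suffices'' step the paper leaves implicit. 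One cosmetic point: you write $\chi$ for $\frac{4x}{1-2y-z}$, whereas the paper calls this $\eta$ and reserves $\chi$ for $\frac{2y}{1-z}$.
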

\begin{proof}
It suffices to show that 
$\vec{r} \cdot \Grad f_i = i \, q_{i}^x{\left(\frac{4x}{1-2y-z}\right)} {\left(\frac{1-2y-z}{4}\right)}^i + \mathcal{R}_{i-1}$, where $\mathcal{R}_{i-1} \in \Po_{i-1}$.
Using  the abbreviation $\eta = \frac{4x}{1-2y-z}$, we introduce
$\tilde{f}_i(x,y,z) = \begin{pmatrix} (q_{i}^x)'(\eta) \\ \frac12 \tilde{Q}_{i-1}^x(\eta)\\ \frac14 \tilde{Q}_{i-1}^x(\eta) \end{pmatrix}$.
Then an easy computation shows that $\Grad f_i(x,y,z) = \tilde{f}_i(x,y,z) {\left(\frac{1-2y-z}{4}\right)}^{i-1}$ by applying~\eqref{eq:DiffFormel}.
Then, 
\begin{equation*}
    \begin{aligned}
    \vec{r} \cdot \tilde{f}_i(x,y,z) &= {\left(x (q_{i}^x)'{\left(\eta\right)} + \frac{y}{2} \tilde{Q}_{i-1}^x{\left(\eta\right)} + \frac{z}{4} \tilde{Q}_{i-1}^x {\left(\eta\right)}\right)}\\
    &=\frac{1-2y-z}{4}{\left(\frac{4x}{1-2y-z} (q_{i}^x)'{\left(\eta\right)} - \tilde{Q}_{i-1}^x{\left(\eta\right)} + \frac{1}{1-2y-z}\tilde{Q}_{i-1}^x {\left(\eta\right)}\right)}.
    \end{aligned}
\end{equation*}
The last term simplifies to 
    $\vec{r} \cdot \tilde{f}_i(x,y,z) = i {\left(\frac{1-2y-z}{4}\right)} q_{i}^x{\left(\eta\right)} + \frac{1}{4}\tilde{Q}_{i-1}^x{\left(\eta\right)}$
by using~\eqref{eq:DiffFormel} again.
This gives
\begin{equation*}
    \vec{r} \cdot \Grad f_i(x,y,z) = i \underbrace{{\left(\frac{1-2y-z}{4}\right)}^i q_{i}^x {\left(\frac{4x}{1-2y-z}\right)}}_{=f_i(x,y,z)} + \frac{1}{4} \underbrace{{\left(\frac{1-2y-z}{4}\right)}^{i-1} \tilde{Q}_{i-1}^x{\left(\frac{4x}{1-2y-z}\right)}}_{\in \Po_{i-1}}
\end{equation*}
and finishes the proof.
\end{proof}
\begin{remark}
Using $\eta = \frac{4x}{1-2y-z}$,
    the proof shows that
    \begin{equation}\label{eq:Gradf}
    \Grad f_i(x,y,z) ={\left(\frac{1-2y-z}{4}\right)}^{i-1} 
    \begin{pmatrix} (q_{i}^x)'(\eta) \\ \frac12 \tilde{Q}_{i-1}^x(\eta)\\ \frac14 \tilde{Q}_{i-1}^x(\eta)\end{pmatrix}.
    \end{equation}
\end{remark}

\begin{lemma}\label{cor:partb} 
If $i+j+k\leq p$, then
\begin{equation*}
    \vec{r} \cdot f_i(x,y,z) (\Grad g_{i,j}(x,y,z)) h_{i,j,k}(x,y,z) = j \, u_{ijk}^{\blacktriangle}(x,y,z) + \underbrace{p(x,y,z)}_{\in \Po_{p-1}}.
\end{equation*}
\end{lemma}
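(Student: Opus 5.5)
The plan mirrors the proof of Lemma~\ref{cor:parta}. Since $f_i$ does not depend on the gradient and $h_{i,j,k}$ is a polynomial of degree $k$ in $z$ alone, it suffices to show
\[
\vec{r}\cdot \Grad g_{i,j}(y,z) = j\, g_{i,j}(y,z) + \mathcal{R}_{j-1}(y,z), \qquad \mathcal{R}_{j-1}\in\Po_{j-1}.
\]
Multiplying by $f_i h_{i,j,k}\in\Po_{i+k}$ then gives $j\,u_{ijk}^{\blacktriangle}$ plus a polynomial of degree at most $i+j+k-1\le p-1$.

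To compute $\Grad g_{i,j}$ with $\xi=\frac{2y}{1-z}$, I apply the chain rule. The $x$-component vanishes. The $y$-component is $(q_j^{y,i})'(\xi)\left(\frac{1-z}{2}\right)^{j-1}$. For the $z$-component, differentiating $\xi$ gives $\frac{2y}{(1-z)^2}=\frac{\xi}{1-z}$, and differentiating the power gives $-\frac{j}{2}\left(\frac{1-z}{2}\right)^{j-1}$. Combining these,
\[
\partial_z g_{i,j}=\tfrac12\left(\tfrac{1-z}{2}\right)^{j-1}\bigl(\xi (q_j^{y,i})'(\xi)-j q_j^{y,i}(\xi)\bigr)=\tfrac12\left(\tfrac{1-z}{2}\right)^{j-1}\tilde{Q}_{j-1}^{y,i}(\xi),
\]
where the last step uses \eqref{eq:DiffFormel}. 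This is the analogue of \eqref{eq:Gradf}.

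Next I take the scalar product with $\vec{r}$:
\[
\vec{r}\cdot\Grad g_{i,j}=\left(\tfrac{1-z}{2}\right)^{j-1}\Bigl(y (q_j^{y,i})'(\xi)+\tfrac z2\tilde{Q}_{j-1}^{y,i}(\xi)\Bigr).
\]
I rewrite $y=\frac{1-z}{2}\xi$ and use \eqref{eq:DiffFormel} again, $\xi (q_j^{y,i})'(\xi)=j q_j^{y,i}(\xi)+\tilde{Q}_{j-1}^{y,i}(\xi)$. The $\tilde Q$ terms then combine with coefficient $\frac{1-z}{2}+\frac z2=\frac12$, so
\[
\vec{r}\cdot\Grad g_{i,j}= j\,g_{i,j}+\tfrac12\left(\tfrac{1-z}{2}\right)^{j-1}\tilde{Q}_{j-1}^{y,i}\left(\tfrac{2y}{1-z}\right).
\]

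The only point needing care is that this remainder is a genuine polynomial in $(y,z)$ of degree at most $j-1$. This holds because $\tilde{Q}_{j-1}^{y,i}$ has degree at most $j-1$, so the prefactor $\left(\frac{1-z}{2}\right)^{j-1}$ cancels every denominator, the standard collapsed-coordinate homogenization argument. The cancellation of the $z$-dependent terms in the previous step is the crux, so I will verify it carefully. The rest is routine bookkeeping of degrees.
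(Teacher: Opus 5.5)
Your proof is correct and follows the paper's route: compute $\Grad g_{i,j}$ by the chain rule and \eqref{eq:DiffFormel}, take the dot product with $\vec{r}$, substitute $y=\frac{1-z}{2}\chi$ and apply \eqref{eq:DiffFormel} again to get $j\,g_{i,j}+\frac12\left(\frac{1-z}{2}\right)^{j-1}\tilde{Q}_{j-1}^{y,i}(\chi)$ with remainder in $\Po_{j-1}$. You also write out the derivation of the $z$-component of the gradient and the final multiplication by $f_i h_{i,j,k}\in\Po_{i+k}$, both of which the paper leaves implicit.
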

\begin{proof}
With $\chi=\frac{2y}{1-z}$, we set
\begin{equation}\label{eq:gradg}
    \tilde{g}_{i,j}(x,y,z) = \begin{pmatrix}0\\ (q_j^{y,i})'(\chi)\\ \frac12   \tilde{Q}_{j-1}^{y,i}(\chi) \end{pmatrix}.
\end{equation}
Then, $\Grad g_{i,j}(x,y,z) = \tilde{g}_{i,j}(x,y,z) {\left(\frac{1-z}{2}\right)}^{j-1}$ by using~\eqref{eq:DiffFormel}
for the family of polynomials $q_j^{y,i}$ of degree $j$ which may depend on a parameter $i$.
We write the scalar product as follows
\begin{equation*}
\begin{aligned}
    \vec{r} \cdot \tilde{g}_{i,j}(x,y,z) &= y(q_j^{y,i})'(\chi)+ \frac{z}{2}   \tilde{Q}_{j-1}^{y,i}(\chi)\\
    &= \frac{1-z}{2} {\left(  \frac{2y}{1-z} (q_j^{y,i})'(\chi)-  \tilde{Q}_{j-1}^{y,i}(\chi)+\frac{1}{1-z} \tilde{Q}_{j-1}^{y,i}(\chi) \right)}
    &\overset{\eqref{eq:DiffFormel}}{=} j \frac{1-z}{2} (q_j^{y,i})(\chi)+\frac12 \tilde{Q}_{j-1}^{y,i}(\chi).
    \end{aligned}
\end{equation*}
A multiplication with ${\left(\frac{1-z}{2}\right)}^{j-1}$ gives
$\vec{r} \cdot \Grad g_{i,j}(x,y,z) = j \underbrace{{\left(\frac{1-z}{2}\right)}^{j} q_j^{y,i}{\left(\frac{2y}{1-z}\right)}}_{=g_{i, j}(x,y,z)}  + \underbrace{\mathcal{R}_{j-1}(x,y,z)}_{\in \Po_{j-1}}$.
\end{proof}


\begin{lemma}\label{cor:partc}
There holds
$    \vec{r} \cdot f_i(x,y,z) g_{i,j}(x,y,z) (\Grad h_{i,j,k}(x,y,z)) = k \, u_{ijk}^{\blacktriangle}(x,y,z) + \underbrace{p(x,y,z)}_{\in \Po_{p-1}}$,
for all $i+j+k\leq p$.
\end{lemma}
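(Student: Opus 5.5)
Following the pattern of Lemmas \ref{cor:parta} and \ref{cor:partb}, the plan is to show that
\[
\vec{r}\cdot\Grad h_{i,j,k} = k\, h_{i,j,k} + \mathcal{R}_{k-1},
\qquad \mathcal{R}_{k-1}\in\Po_{k-1}.
\]
Multiplying this identity by $f_i g_{i,j}$ then gives the statement. Indeed, $f_i\in\Po_i$ and $g_{i,j}\in\Po_j$ are genuine polynomials: the factors $\left(\frac{1-2y-z}{4}\right)^i$ and $\left(\frac{1-z}{2}\right)^j$ clear the denominators. Hence $f_i g_{i,j}\mathcal{R}_{k-1}\in\Po_{i+j+k-1}\subset\Po_{p-1}$ because $i+j+k\le p$, while $f_i g_{i,j}\, k\, h_{i,j,k}=k\,u^{\blacktriangle}_{ijk}$.

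For the identity itself, note that $h_{i,j,k}(z)=q_k^{z,i,j}(z)$ depends only on $z$. Therefore
\[
\Grad h_{i,j,k}=\begin{pmatrix}0\\0\\(q_k^{z,i,j})'(z)\end{pmatrix},
\qquad
\vec{r}\cdot\Grad h_{i,j,k}=z\,(q_k^{z,i,j})'(z).
\]
Applying \eqref{eq:DiffFormel} to the degree-$k$ polynomial $q_k^{z,i,j}$ with $\chi=z$ gives
\[
z\,(q_k^{z,i,j})'(z)=k\,q_k^{z,i,j}(z)+\tilde{Q}^{z,i,j}_{k-1}(z),
\qquad \tilde{Q}^{z,i,j}_{k-1}\in\Po_{k-1}.
\]
This is exactly the claimed identity with $\mathcal{R}_{k-1}=\tilde{Q}^{z,i,j}_{k-1}$.

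No substantial obstacle is expected. Unlike the $f_i$ and $g_{i,j}$ cases, no collapsed (Duffy-type) coordinate appears here, so there is no rescaling factor to track. The only point needing care is the degree bookkeeping: one must confirm that $f_i$ and $g_{i,j}$ are polynomials of total degree $i$ and $j$ respectively. This follows from the homogeneity of the scaling, since each monomial $\eta^m\left(\frac{1-2y-z}{4}\right)^i$ with $m\le i$ is a polynomial of degree at most $i$, and likewise for $g_{i,j}$.
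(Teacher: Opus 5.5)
Your proposal is correct and follows the paper's proof: it too uses $\Grad h_{i,j,k}=(0,0,(q_k^{z,i,j})'(z))^\top$ and applies \eqref{eq:DiffFormel} to obtain $\vec{r}\cdot\Grad h_{i,j,k}=k\,h_{i,j,k}+\mathcal{R}_{k-1}$. Your multiplication by $f_i g_{i,j}$ and the degree count $i+j+k-1\le p-1$ just make explicit what the paper leaves implicit.
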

\begin{proof}
Note that
$\Grad h_{i,j,k}(x,y,z) = \begin{pmatrix} 0 \\ 0 \\ (q_k^{z,i,j})'(z) \end{pmatrix}$. This implies
$\vec{r} \cdot \Grad h_{i,j,k}(x,y,z) 
    \stackrel{\mathmakebox[\widthof{=}]{\eqref{eq:DiffFormel}}}{=}\quad k \underbrace{q^{z,i,j}_{k}}_{=h_{i,j,k}(x,y,z)} + \underbrace{\mathcal{R}_{k-1}}_{\in \Po_{k-1}}$.
\end{proof}
We apply the results of \cref{cor:parta,cor:partb,cor:partc} to a general polynomial function of the type
\[
    \phi_{ijk}(x,y,z) = c_1 \Grad f g h + c_2 f \Grad g h + c_3 f g \Grad h, \quad  \in {\left[\Po_{i+j+k-1}\right]}^3.
\]
Thus, the relation
$\vec{r} \cdot \phi_{ijk}(x,y,z) = (c_1\, i + c_2\, j + c_3\, k) u_{ijk}^\blacktriangle + \mathcal{R}_{i+j+k-1}$
holds. If $(c_1\, i + c_2\, j + c_3\, k) = 0$, the polynomial $\phi_{ijk}(x,y,z)$ is a \nedelec~function of first kind. One possible (non-unique) set of linear independent functions is given by
\begin{equation}\label{eq:NewTet}
    \begin{aligned}
    v_{ijk}^{\blacktriangle,II,\ned} &\coloneqq j\, \Grad (f_i) g_{i,j} h_{i,j,k} - i\, f_i \Grad(g_{i,j}) h_{i,j,k},\\
    v_{ijk}^{\blacktriangle,III, \ned} &\coloneqq k\, \Grad (f_i) g_{i,j} h_{i,j,k}- i\, f_i g_{i,j} \Grad(h_{i,j,k}),
    \end{aligned}
\end{equation}
with the auxiliary functions~\eqref{eq:defaux3d}.
We summarize our results in the following theorem.
\begin{theorem}
Let us assume that $q_i^x$, $q_j^{y,i}$ and $q_k^{z,i,j}$ satisfy \eqref{eq:DiffFormel}.
Let $i+j+k \leq p$ with $i\geq 2, j,k \geq 1$.
Then, the functions $v^{\blacktriangle,I}_{ijk}(x,y,z),v^{\blacktriangle,IV}_{1jk}(x,y,z)$ as in~\eqref{Dual:TetBasis} and $v^{\blacktriangle,II,\ned}_{ijk}(x,y,z), v^{\blacktriangle,III,\ned}_{ijk}(x,y,z)$ as in~\eqref{eq:NewTet} are a basis of the interior functions of the \nedelec space of second kind. 
Furthermore, using $v^{\blacktriangle,I}_{ijk}(x,y,z)$ only up to polynomial degree $p-1$ yields the \nedelec~functions of first kind.
\end{theorem}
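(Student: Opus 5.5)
The proof splits into two claims, handled in order. First, the listed functions form a basis of the interior functions of the second-kind space ${[\Po_{p-1}]}^3$. Second, the first-kind subspace $R_{p-1}$ is obtained by restricting the gradient functions $v^{\blacktriangle,I}_{ijk}$ to total degree $i+j+k\le p-1$.

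For the first claim we do not reprove spanning from scratch. We use the known result of \cite{zaglmayr2006} that $\{v^{\blacktriangle,I},v^{\blacktriangle,II},v^{\blacktriangle,III},v^{\blacktriangle,IV}\}$ is a basis of the interior space. For fixed $(i,j,k)$, set $A=\Grad(f_i)g_{i,j}h_{i,j,k}$, $B=f_i\Grad(g_{i,j})h_{i,j,k}$ and $C=f_ig_{i,j}\Grad(h_{i,j,k})$. Then $v^I=A+B+C$, $v^{II}=A-B+C$ and $v^{III}=A+B-C$, and this triple spans the same space as $\{A,B,C\}$. The new triple is $v^I=A+B+C$, $v^{II,\ned}=jA-iB$ and $v^{III,\ned}=kA-iC$. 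Its coefficient matrix
\[
\begin{pmatrix}1&1&1\\ j&-i&0\\ k&0&-i\end{pmatrix}
\]
has determinant $i(i+j+k)\neq0$, since $i\ge2$. So, index by index, the new triple spans the same space as the old one, and $v^{IV}$ is unchanged. The new family is therefore again a basis, with the same cardinality.

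For the second claim, let $n=p-1$. The dimension of the interior part of $R_n$ equals the interior dimension of $[\Po_n]^3$ minus the interior dimension of $\Po_{n+1}/\Po_n$-type gradients, i.e.\ minus the number of $H^1$ interior functions $u_{ijk}$ with $i+j+k=p$. First, every proposed function lies in $R_{p-1}$:
\begin{itemize}
\item $v^{II,\ned}_{ijk}$ and $v^{III,\ned}_{ijk}$ lie in $R_{p-1}$ by Lemmas~\ref{cor:parta}--\ref{cor:partc}. Their coefficients satisfy $c_1i+c_2j+c_3k=ji-ij=0$ and $ki-ik=0$, so $\vec r\cdot\phi$ has degree at most $i+j+k-1$. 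This is the characterization of $R_n$ as those $\phi\in[\Po_n]^3$ whose degree-$n$ homogeneous part is annihilated by $\vec r\cdot$. For $i+j+k<p$ the function lies in $[\Po_{p-2}]^3$ anyway.
\item The gradients $v^I_{ijk}$ with $i+j+k\le p-1$ lie in $[\Po_{p-2}]^3\subset R_{p-1}$.
\item $v^{IV}_{1jk}$ is $v^{\ned_0}$ times a polynomial of degree $j+k$, with $1+j+k\le p$. I would verify directly that $\vec r\cdot v^{\ned_0}_{[1,2]}$ is constant. Since $v^{\ned_0}=\lambda_1\Grad\lambda_2-\lambda_2\Grad\lambda_1$ and $\vec r\cdot\Grad\lambda=\lambda-\lambda(0)$, this gives the required degree drop.
\end{itemize}

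Second, the excluded functions are exactly $v^I_{ijk}$ with $i+j+k=p$. These are gradients of the $u_{ijk}$ of degree exactly $p$, and $\vec r\cdot v^I=(i+j+k)u_{ijk}+\text{lower}$ has full degree $p$. Hence no nontrivial combination of them, even after adding elements of $R_{p-1}$, lies in $R_{p-1}$, because $\vec r\cdot$ kills the top part of any $R$ element. This shows $R_{p-1}\cap\operatorname{span}\{v^I_{ijk}:i+j+k=p\}=\{0\}$. Combined with the dimension count (codimension of the interior of $R_{p-1}$ in the interior of $[\Po_{p-1}]^3$ equals the number of interior $H^1$ bubbles of exact degree $p$), this gives equality.

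I expect the main obstacle to be this dimension/complement bookkeeping for interior (bubble) spaces, in particular matching the count of excluded gradients with the known codimension. I would cite it via the exact sequences in \Cref{tab:DeRham}, applied to the spaces with vanishing traces. A second point to check is the treatment of $v^{IV}$.
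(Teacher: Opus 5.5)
Your basis argument is the paper's proof. The paper also just invokes \cite{zaglmayr2006} and notes that, index by index, the new triple comes from the old one by a regular $3\times 3$ transformation; your determinant $i(i+j+k)$ makes this explicit.

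The paper stops there. It treats the first-kind claim as settled by Lemmas~\ref{cor:parta}--\ref{cor:partc} together with the condition $c_1 i+c_2 j+c_3 k=0$, and it never checks $v^{\blacktriangle,IV}_{1jk}$ or compares dimensions. So your second part supplies what the paper leaves implicit. Your count is right: both the codimension of $\mathring R_{p-1}$ in the interior of $[\Po_{p-1}]^3$ and the number of $u_{ijk}$ with $i+j+k=p$ equal $\binom{p-2}{2}$. Once you have membership, linear independence (a subset of a basis) and this count, the trivial-intersection step is superfluous. As written, that step also silently assumes that no nontrivial combination of the $u_{ijk}$ with $i+j+k=p$ lies in $\Po_{p-1}$. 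This holds, but it needs the fact that interior bubbles of degree $p-1$ are spanned by the lower $u_{ijk}$.

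Two slips should be corrected.

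\textbf{The inner product $\vec r\cdot v^{\ned_0}_{[1,2]}$ is not constant.} It equals $-x/8$; your own formula gives $\lambda_1(0)\lambda_2-\lambda_2(0)\lambda_1$, which is affine. All you need is that $\vec r$ annihilates the linear part $-\frac18(-2y-z,\,2x,\,x)^\top$, i.e.\ $v^{\ned_0}_{[1,2]}\in R_1$. That gives $v^{\blacktriangle,IV}_{1jk}\in R_{1+j+k}$.

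\textbf{The index range for $v^{\blacktriangle,IV}_{1jk}$ is off by one.} This function has degree $1+j+k$. With your range $1+j+k\le p$, the extreme functions would not even lie in $[\Po_{p-1}]^3$, and neither claim could hold for them. The correct range is $j+k\le p-2$. It is also exactly what the interior dimension forces: $\frac{p(p-2)(p-3)}{2}=3\binom{p-1}{3}+\binom{p-2}{2}$. The paper states its own index range loosely here. With $j+k\le p-2$, you get $R_{1+j+k}\subset R_{p-1}$, and your argument goes through.
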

\begin{proof}
It remains to show that the functions form a basis of $H(\Curl,\blacktriangle)$. This follows from the results of \cite{zaglmayr2006}, since the basis functions are just a basis transformation with a block diagonal 
matrix consisting of regular $3\times 3$ blocks.
\end{proof}

Although this is satisfied for any polynomial basis function in the auxiliary functions \eqref{eq:defaux3d}, 
which follows the construction principle in \eqref{Dual:TetBasis}, there are basis functions 
which have advantages with regard to condition number and sparsity of the element matrices.
Here, we mention the choices of  $q^x_i(t)=\hat{L}_i(t)$, $q_{j}^{y,i}(t)=\phat_j^{2i}(t)$ and $q_{k}^{z,i,j}(t)=\phat_k^{2i+2j}(t)$ in \eqref{eq:defaux3d}, giving
\begin{equation}\label{def:aux2Jac}
f_i(x,y,z)=L_i\left(\frac{4x}{1-2y-z}\right)\left(\frac{1-2y-z}{4}\right)^i,\; 
g_{i,j}(y,z)=\phat_j^{2i}\left(\frac{2y}{1-z}\right)\left( \frac{1-z}{2}\right)^j,
\textrm{and }
h_{i,j,k}(z)=\phat_k^{2i+2j}(z).
\end{equation}  
Then, the interior functions are given by
\begin{equation}\label{Dual:TetBasisJac}
    \begin{aligned}
    v_{ijk}^{\blacktriangle,I} &= \Grad (f_i) g_{i,j} h_{i,j,k} + f_i \Grad(g_{i,j}) h_{i,j,k} + f_i g_{i,j} \Grad(h_{i,j,k}),
    &\quad & v_{ijk}^{\blacktriangle,II,\ned} &= j\Grad (f_i) g_{i,j} h_{i,j,k} - i f_i \Grad(g_{i,j}) h_{i,j,k} \\
    v_{ijk}^{\blacktriangle,III,\ned} &= k \Grad (f_i) g_{i,j} h_{i,j,k}  - i f_i g_{i,j} \Grad(h_{i,j,k}),&\quad &
    v_{1jk}^{\blacktriangle,IV} &= v^{\ned_0}_{[1,2]} g_{1,j} h_{1,j,k},
    \end{aligned}
\end{equation}
for $2 \leq i, 1\leq j,k$, and $i+j+k\leq p$.
\begin{remark}
    The biorthogonal functions to the basis \eqref{Dual:TetBasisJac} can be developed similarly to the functions
    in \cite{haubold2024}. Only the matrix $A$ of Lemma 4.16 in \cite{haubold2024} has to be modified
    from $A=\begin{pmatrix} 1 & 1 & 1\\ 1 & -1 &1 \\ 1 & 1 & -1 \end{pmatrix}$ to the regular matrix
    $A=\begin{pmatrix} 1 & 1 & 1\\ j & -i &0 \\ k & 0 & -i \end{pmatrix}$.
\end{remark}

\begin{remark}
The main application of these results is basis functions which are constructed with the aid of nested 
polynomials as in \eqref{def:aux2Jac}. However, the result can also be applied to functions 
of the form \eqref{Dual:TetBasisJac}, where the one dimensional functions $f_i$, $g_{i,j}$ and
$h_{i,j,k}$ are any polynomials if  $i$, $j$ and $k$ are understood as the polynomial degrees.
In particular if $h_{ijk}$ is chosen as Lagrange polynomials of degree $p$, then $k$ has to be replaced by $p$
in the factor before $ (\nabla f_i) g_{ij} h_{ijk} $ in the definition of $v_{ijk}^{\blacktriangle,III,\ned}$ for all $k$ and so on.
\end{remark}

\section{New \texorpdfstring{$H(\Div)$}{} basis functions on a tetrahedron}\label{ch:Div+Basis}
For the case of $H(\Div)$ conforming basis functions, we must show that our construction lies in the Raviart-Thomas space. The triangular case is trivial, since the $H(\Div)$ basis functions are identical to the $H(\Curl)$ case up to a rotation by $90^\circ$.
In three space dimensions, the relation
\begin{equation}\label{eq:RT_Cond}
    \tilde{w}_{ijk}^{\blacktriangle,I}(\vec{r}) = \vec{r} \tilde{\Po}_{k-1}(\vec{x}) + {(\Po_{k-1})}^3.
\end{equation}
has to be shown.
First, note that the functions $w_{ijk}^{\blacktriangle,s}(x,y,z)$ $s\in \{I,II,III\}$ can be expressed as
$w_{ijk}^{\blacktriangle,s}(x,y,z)= a_0\, f \Grad g \times \Grad h - a_1\, g \Grad f \times \Grad h + a_2\, h \Grad f \times \Grad g$
with coefficients $a_0,a_1,a_2$. In the definition~\eqref{Div:TetBasis2}, the parameter $a_0=a_1=a_2=2$ is chosen for $s=I$,
see also~\eqref{Div:TetBasis}, \eqref{Div:TetBasis2}.
We will modify these coefficients such that the functions fit into~\eqref{eq:RT_Cond}.
Note that each summand contains the gradients of two of the auxiliary functions $f,g,h$, where the third one
does not involve derivatives. In order to obtain a structure as in~\eqref{eq:RT_Cond}, 
the polynomial (of degree $i$, $j$, or $k$) without derivative has to be transformed into the derivatives using~\eqref{eq:DiffFormel}.
Then, a corresponding factor ($i$, $j$ or $k$) appears. This motivates the choices $a_0=i$, $a_1=j$ and $a_2=k$.

Similar to the case of \nedelec~basis functions, we show that
\begin{equation}\label{eq:Defansatz}
   w_{ijk}^{\blacktriangle,I}(\vec{r}) = i \, f_i \Grad g_{i,j} \times \Grad h_{i,j,k} -
   j\, g_{i,j} \Grad f_i \times \Grad h_{i,j,k} + k \, h_{i,j,k} \Grad f_{i} \times \Grad g_{i,j}
\end{equation}
will be our Raviart-Thomas functions for $i\geq 2$.
In the case $i=1$, we introduce
\begin{equation}\label{eq:Defansatz1}
     w_{1jk}^{\blacktriangle,I}(\vec{r}) =(j+2)g_{1,j} \left(\nabla h_{1,j,k}\times v^{\ned_0}_{[1,2]}\right)
    -k h_{1,j,k} \left(\nabla  g_{1,j}\times v^{\ned_0}_{[1,2]}\right)-k g_{1,j} h_{1,j,k} \Curl v^{\ned_0}_{[1,2]}
    \end{equation}

We introduce the
abbreviations $\chi=\frac{2y}{1-z}$ and $\eta=\frac{4x}{1-2y-z}$ again.
\begin{theorem}\label{main:Hdiv}
Let us assume that $q_i^x$, $q_j^{y,i}$ and $q_k^{z,i,j}$ satisfy \eqref{eq:DiffFormel}.
    Let $w_{ijk}^{\blacktriangle,I}(\vec{r})$ be defined by~\eqref{eq:Defansatz} and \eqref{eq:Defansatz1}, respectively. Then,
    $w_{ijk}^{\blacktriangle,I}(\vec{r})\in \RT_{i+j+k-2}$. More precisely,
    $w_{ijk}^{\blacktriangle,I}(\vec{r})= p_1(x,y,z) \vec{r} +\vec{p}_2(x,y,z)$,
    where
    \[
    p_1(x,y,z)=\left\{ \begin{array}{cc}
    (q_{i}^x)'(\eta) {\left( \frac{1-2y-z}{4}\right)}^{i-1} (q_j^{y,i})'(\chi) {\left( 
    \frac{1-z}{2}\right)}^{j-1} (q_k^{z,i,j})'(z) \in \Po_{i+j+k-3} & i\geq 2 \\
    \left( \frac{1-z}{2}\right)^{j-1} \frac18 (q_k^{z,1,j})'(z) \left( (2y+z)(q_j^{y,1})'(\chi) +2(1-z) q_j^{y,1}(\chi)  \right)\in \Po_{j+k-1}& i=1.
    \end{array}\right.
    \]
    and $\vec{p}_2(x,y,z)\in {[\Po_{i+j+k-3}]}^3$ or $\vec{p}_2(x,y,z)\in {[\Po_{j+k-1}]}^3$ for $i\geq2$ and 
    $i=1$, respectively.
    \end{theorem}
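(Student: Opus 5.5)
The plan is a direct computation that mirrors the proofs of \cref{cor:parta,cor:partb,cor:partc}. Write $F=\frac{1-2y-z}{4}$ and $G=\frac{1-z}{2}$. By \eqref{eq:Gradf} and \eqref{eq:gradg},
\[
\Grad f_i=F^{i-1}\bigl((q_i^x)'(\eta),\tfrac12\tilde Q^x_{i-1}(\eta),\tfrac14\tilde Q^x_{i-1}(\eta)\bigr)^\top,\qquad
\Grad g_{i,j}=G^{j-1}\bigl(0,(q_j^{y,i})'(\chi),\tfrac12\tilde Q^{y,i}_{j-1}(\chi)\bigr)^\top,
\]
and $\Grad h_{i,j,k}=(0,0,(q_k^{z,i,j})'(z))^\top$. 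The three cross products in \eqref{eq:Defansatz} can then be written out explicitly:
\begin{itemize}
\item $\Grad g\times\Grad h$ points only in the $e_x$ direction;
\item $\Grad f\times\Grad h$ has components $F^{i-1}q_k'\,(\tfrac12\tilde Q^x_{i-1},-q_i',0)$;
\item $\Grad f\times\Grad g$ has components $F^{i-1}G^{j-1}\bigl(\tfrac14\tilde Q^x_{i-1}\tilde Q^{y,i}_{j-1}-\tfrac14\tilde Q^x_{i-1}q_j',\;-\tfrac12 q_i'\tilde Q^{y,i}_{j-1},\;q_i'q_j'\bigr)$.
\end{itemize}
Next, the undifferentiated factor in each summand is replaced using \eqref{eq:DiffFormel}, exactly as in the triangle proof:
\[
i f_i=F^{i-1}\bigl(x(q_i^x)'(\eta)-F\tilde Q^x_{i-1}(\eta)\bigr),\qquad
j g_{i,j}=G^{j-1}\bigl(y(q_j^{y,i})'(\chi)-G\tilde Q^{y,i}_{j-1}(\chi)\bigr),\qquad
k h_{i,j,k}=z(q_k^{z,i,j})'-\tilde Q^{z}_{k-1}.
\]
These are precisely where the weights $i,j,k$ in \eqref{eq:Defansatz} come from.

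For $i\ge2$, substitute these into \eqref{eq:Defansatz} and separate the contributions by their leading factor. The terms carrying $x$, $y$ and $z$ respectively produce $p_1\,(x,y,z)^\top$, where $p_1=F^{i-1}q_i'\,G^{j-1}q_j'\,q_k'$. This $p_1$ lies in $\Po_{i+j+k-3}$ because each $F^{i-1}q_i'(\eta)$-type factor is a genuine polynomial of the indicated degree. In the signs, the minus in front of $j g\,\Grad f\times\Grad h$ combines with the $-q_i'$ component to give $+y\,p_1$.

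All remaining terms have to be shown to lie in $[\Po_{i+j+k-3}]^3$. A naive degree count does not suffice here. For example, in the $e_y$ component, $-G^{j}\tilde Q^{y,i}_{j-1}F^{i-1}q_i'q_k'$ (from the $g$-term) and $-\tfrac12 z\,G^{j-1}\tilde Q^{y,i}_{j-1}F^{i-1}q_i'q_k'$ (from the $h$-term) are each of degree $i+j+k-2$. I will group such terms pairwise, or in triples, so that they combine into the affine identities
\[
G+\tfrac z2=\tfrac12,\qquad F+\tfrac y2+\tfrac z4=\tfrac14 .
\]
These identities lower the degree by one; in the example above the sum equals $-\tfrac12 G^{j-1}\tilde Q^{y,i}_{j-1}F^{i-1}q_i'q_k'\in\Po_{i+j+k-3}$. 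These are the same cancellations that made the proofs of \cref{cor:parta,cor:partb} work. I will go component by component ($e_x$, $e_y$, $e_z$) and list every term of degree $i+j+k-2$. The terms involving $\tilde Q^{z}_{k-1}$, or products of two $\tilde Q$'s, are already of degree at most $i+j+k-3$.

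For $i=1$ I will use an explicit formula for $v^{\ned_0}_{[1,2]}$ on $\blacktriangle$; it is affine with a constant curl. Combined with $g_{1,j}$ and $\Grad g_{1,j}$ as above, this lets me expand \eqref{eq:Defansatz1} the same way. Here only $k h$ and the gradient of $g$ get rewritten, and the weight $j+2$ accounts for the extra degree carried by the edge function. The expected $\vec r$-coefficient is the stated $p_1$ containing $(2y+z)q_j'+2(1-z)q_j$. The remainder should again collapse to degree $j+k-1$ by the same affine identities.

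I expect the main obstacle to be this bookkeeping. In the $e_x$ and $e_z$ components of the $i\ge2$ case, terms with $\tilde Q^x_{i-1}$ from all three summands must be matched, and the identity $F+\tfrac y2+\tfrac z4=\tfrac14$ must be applied with the right coefficients. For $i=1$, the nonhomogeneous $\Curl v^{\ned_0}$ term has to combine correctly with the other two. I will handle this by first carrying out the calculation symbolically for generic $q$ satisfying \eqref{eq:DiffFormel}, and only then reading off $p_1$ and $\vec p_2$.
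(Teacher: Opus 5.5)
Your plan follows the paper's proof. You compute the cross products from \eqref{eq:Gradf} and \eqref{eq:gradg}, rewrite the undifferentiated factor with \eqref{eq:DiffFormel}, read off $F^{i-1}G^{j-1}(q_i^x)'(q_j^{y,i})'(q_k^{z,i,j})'\,\vec r$, and show the rest drops a degree. The paper's vector $\mathcal R_3$ is exactly this bookkeeping, and its cancellations are your two affine identities. Your cross products and rewritten factors are correct.

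One claim is false: products of two $\tilde Q$'s are not automatically of degree at most $i+j+k-3$. In the $e_x$ component, the $g$-summand contributes $\tfrac12 F^{i-1}G^{j}\tilde Q^x_{i-1}(\eta)\tilde Q^{y,i}_{j-1}(\chi)(q_k^{z,i,j})'(z)$. The $h$-summand contributes $\tfrac14 z F^{i-1}G^{j-1}\tilde Q^x_{i-1}(\eta)\tilde Q^{y,i}_{j-1}(\chi)(q_k^{z,i,j})'(z)$. Because of the extra factor $G$, resp.\ $z$, each has degree $i+j+k-2$. Only their sum $\tfrac14 F^{i-1}G^{j-1}\tilde Q^x_{i-1}\tilde Q^{y,i}_{j-1}(q_k^{z,i,j})'$ has degree $i+j+k-3$, by $G+\tfrac z2=\tfrac12$; this is the entry $\tfrac14\tilde Q^x_{i-1}\tilde Q^{y,i}_{j-1}$ of $\mathcal R_3$. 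Only terms containing $\tilde Q^{z,i,j}_{k-1}$ may be discarded a priori. Conversely, $e_z$ needs no matching: only the $h$-summand contributes there, giving $F^{i-1}G^{j-1}(q_i^x)'(q_j^{y,i})'\bigl(z(q_k^{z,i,j})'-\tilde Q^{z,i,j}_{k-1}\bigr)$.

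For $i=1$, rewriting only $kh_{1,j,k}$ and $\Grad g_{1,j}$ is not enough. You must also split $(j+2)g_{1,j}=2g_{1,j}+G^{j-1}\bigl(y(q_j^{y,1})'(\chi)-G\tilde Q^{y,1}_{j-1}(\chi)\bigr)$, as the paper does in Lemma \ref{lemma:grad1gh}. Otherwise, modulo degree $\le j+k-1$, the $e_x$ component differs from $p_1x$ by $\tfrac18(q_k^{z,1,j})'\,x\bigl(2jg_{1,j}-2yG^{j-1}(q_j^{y,1})'-zG^{j-1}\tilde Q^{y,1}_{j-1}\bigr)$. This has nominal degree $j+k$ and reduces only after applying \eqref{eq:DiffFormel} to $jg_{1,j}$ and then $2G+z=1$; the same happens in $e_y$. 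After the split, the $\vec r$-coefficient is $\tfrac18(q_k^{z,1,j})'\bigl((2y+z)G^{j-1}(q_j^{y,1})'+4g_{1,j}\bigr)$, which is the stated $p_1$.

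Note also that $w^{\blacktriangle,I}_{1jk}$ has degree exactly $j+k$: the $y^{j}$-coefficient of the top part of $(2y+z)G^{j-1}(q_j^{y,1})'+4g_{1,j}$ is $(2j+4)$ times the leading coefficient of $q_j^{y,1}$. So for $i=1$ the decomposition gives $w^{\blacktriangle,I}_{1jk}\in\RT_{j+k}$, not $\RT_{i+j+k-2}=\RT_{j+k-1}$. The first assertion of the statement is off by one in that case, so prove the refined decomposition rather than the literal membership.
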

The proof is split into several short lemmas.

Firstly, the gradients of the auxiliary functions are computed. 
\begin{lemma}\label{lemma:gradfgh}
Let $f_i$, $g_{i,j}$ and $h_{i,j,k}$ be defined by~\eqref{eq:defaux3d}. 
Then, we have $\Grad f_i(x,y,z) =    {\left(\frac{1-2y-z}{4}\right)}^{i-1} 
    \begin{pmatrix} (q_{i}^x)'(\eta) \\ \frac12 \tilde{Q}_{i-1}^x(\eta)\\ \frac14 \tilde{Q}_{i-1}^x(\eta) \end{pmatrix}$, $
     \Grad g_{i,j}=   {\left(\frac{1-z}{2}\right)}^{j-1}\begin{pmatrix}0\\ (q_j^{y,i})' {\left(\chi\right)}\\ 
    \frac{1}{2} \tilde{Q}_{j-1}^{y,i}(\chi)
    \end{pmatrix}$ and $\Grad h_{i,j,k}= (q_k^{z,i,j})'(z) \begin{pmatrix} 0 \\ 0 \\ 1  \end{pmatrix}$. 
\end{lemma}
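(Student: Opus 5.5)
The statement is a direct chain-rule computation, and most of it has already been done in the proofs of \cref{cor:parta,cor:partb,cor:partc}. I would verify the three gradients separately and use \eqref{eq:DiffFormel} to remove the factors of $\frac{1}{1-2y-z}$ and $\frac{1}{1-z}$ that appear after differentiating.

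\textbf{Gradient of $f_i$.} Write $f_i = q_i^x(\eta)\,s^i$ with $s=\frac{1-2y-z}{4}$ and $\eta = x/s$. Then $\partial_x \eta = 1/s$, $\partial_y \eta = \frac{2x}{4s^2}=\frac{\eta}{2s}$ and $\partial_z\eta = \frac{\eta}{4s}$. Also $\partial_y s=-\frac12$ and $\partial_z s=-\frac14$.

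The $x$-derivative is $(q_i^x)'(\eta)\,s^{i-1}$. The $y$-derivative is
\[
(q_i^x)'(\eta)\frac{\eta}{2s}s^i - \frac{i}{2} q_i^x(\eta) s^{i-1} = \frac12 s^{i-1}\bigl(\eta (q_i^x)'(\eta) - i q_i^x(\eta)\bigr),
\]
which equals $\frac12 s^{i-1}\tilde{Q}_{i-1}^x(\eta)$ by \eqref{eq:DiffFormel}. The $z$-derivative follows in the same way with $\frac14$ in place of $\frac12$. This is exactly \eqref{eq:Gradf}.

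\textbf{Gradient of $g_{i,j}$.} Write $g_{i,j} = q_j^{y,i}(\chi)\,t^j$ with $t=\frac{1-z}{2}$ and $\chi = y/t$. There is no $x$-dependence, so the first component is $0$. The $y$-derivative is $(q_j^{y,i})'(\chi)\,t^{j-1}$. Using $\partial_z\chi = \frac{\chi}{2t}$ and $\partial_z t=-\frac12$, the $z$-derivative is
\[
\frac12 t^{j-1}\bigl(\chi (q_j^{y,i})'(\chi) - j q_j^{y,i}(\chi)\bigr) = \frac12 t^{j-1}\tilde{Q}_{j-1}^{y,i}(\chi),
\]
again by \eqref{eq:DiffFormel}. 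This matches \eqref{eq:gradg}.

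\textbf{Gradient of $h_{i,j,k}$.} Since $h_{i,j,k}=q_k^{z,i,j}(z)$ depends only on $z$, its gradient is $(q_k^{z,i,j})'(z)\,e_3$.

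The only point needing care is the sign and scaling bookkeeping in the $y$- and $z$-derivatives. There the two chain-rule contributions must be combined into the exact form $\chi q'-jq$ (respectively $\eta q'-iq$) so that \eqref{eq:DiffFormel} applies. I do not expect any genuine obstacle beyond this.
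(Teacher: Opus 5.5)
Your proposal is correct and takes essentially the same route as the paper. The paper's proof only cites \eqref{eq:Gradf} and \eqref{eq:gradg}, which were obtained in the proofs of the preceding lemmas by exactly this chain-rule computation combined with \eqref{eq:DiffFormel}, and it calls the gradient of $h_{i,j,k}$ trivial; you write the chain-rule bookkeeping out explicitly, and the signs and factors $\tfrac12$, $\tfrac14$ are right.
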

\begin{proof}
    The result follows from~\eqref{eq:Gradf} and~\eqref{eq:gradg}. The last equation is trivial.
\end{proof}
The next step computes the summands in~\eqref{eq:Defansatz}.
We start with the first one.
\begin{lemma}
    With the assumptions of \Cref{lemma:gradfgh},
  \begin{eqnarray*}
    i \, f_i \Grad g_{i,j} \times \Grad h_{i,j,k} &=& {\left(x (q_{i}^x)'(\eta)+ \frac{(2y+z-1)}{4}\tilde{Q}_{i-1}^x(\eta)\right)} \\
    &&{\left( \frac{1-2y-z}{4}  \right)}^{i-1} {\left(\frac{1-z}{2}\right)}^{j-1} (q_j^{y,i})'(\chi) (q_k^{z,i,j})'(z) \begin{pmatrix} 1 \\ 0 \\ 0  \end{pmatrix}.
  \end{eqnarray*}  
\end{lemma}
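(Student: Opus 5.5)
The computation is direct. I would use the three gradient formulas from \Cref{lemma:gradfgh}, take the cross product of $\Grad g_{i,j}$ and $\Grad h_{i,j,k}$, and then rewrite $i f_i$ with \eqref{eq:DiffFormel} for $q_i^x$.

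\textbf{Step 1: the cross product.} By \Cref{lemma:gradfgh}, $\Grad h_{i,j,k}=(q_k^{z,i,j})'(z)\,e_3$. Only the $e_2$-component of $\Grad g_{i,j}$ survives the cross product with $e_3$, since its $e_3$-component gives $e_3\times e_3=0$ and its $e_1$-component vanishes. Because $e_2\times e_3=e_1$, we get
\[
\Grad g_{i,j}\times\Grad h_{i,j,k}={\left(\tfrac{1-z}{2}\right)}^{j-1}(q_j^{y,i})'(\chi)\,(q_k^{z,i,j})'(z)\begin{pmatrix}1\\0\\0\end{pmatrix}.
\]
So the $\tilde{Q}^{y,i}_{j-1}$ term drops out entirely.

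\textbf{Step 2: rewriting $i f_i$.} I would treat this exactly as in the triangle proof (the computation leading to \eqref{eq:2dT2}), with $\frac{1-y}{2}$ replaced by $\frac{1-2y-z}{4}$ and with $\eta=\frac{4x}{1-2y-z}$. From \eqref{eq:DiffFormel} we have $i\,q_i^x(\eta)=\eta (q_i^x)'(\eta)-\tilde{Q}^x_{i-1}(\eta)$. Multiplying by ${\left(\frac{1-2y-z}{4}\right)}^{i}$ and absorbing one factor $\frac{1-2y-z}{4}$ into $\eta$ gives
\[
i f_i={\left(\tfrac{1-2y-z}{4}\right)}^{i-1}\left(x (q_i^x)'(\eta)-\tfrac{1-2y-z}{4}\tilde{Q}^x_{i-1}(\eta)\right).
\]
Since $-\frac{1-2y-z}{4}=\frac{2y+z-1}{4}$, this is exactly the bracket in the claim.

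\textbf{Step 3: combining.} Multiplying the results of Steps 1 and 2 yields the stated formula. The only point needing care is the sign convention in $\tilde{Q}$ from \eqref{eq:DiffFormel}, $\chi q'-iq=\tilde{Q}$, which Step 2 uses consistently. I see no real obstacle beyond bookkeeping the cross-product orientation and this sign.
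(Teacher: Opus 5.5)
Your proof is correct and follows the paper's route: you rewrite $i f_i$ via \eqref{eq:DiffFormel}, absorbing one factor $\frac{1-2y-z}{4}$ into $\eta=\frac{4x}{1-2y-z}$, and then take the cross product using the gradients from \Cref{lemma:gradfgh}. You also write out explicitly that $\Grad g_{i,j}\times\Grad h_{i,j,k}$ keeps only the $(q_j^{y,i})'(\chi)$ term in the first component, which the paper leaves implicit.
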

\begin{proof}
We simplify 
\begin{equation*}
\begin{aligned}
    i f_i(x,y,z)=iq_i^x (\eta)  {\left( \frac{1-2y-z}{4}  \right)}^{i}& \overset{\eqref{eq:DiffFormel}}{=}
(\eta (q_{i}^x)'(\eta)- \tilde{Q}_{i-1}^x (\eta)){\left( \frac{1-2y-z}{4}  \right)}^{i}\\
&\overset{\eta=\frac{4x}{1-2y-z}}{=} {\left(x (q_i^x)'(\eta)+ \frac{2y+z-1}{4} \tilde{Q}^x_{i-1}(\eta) \right)} {\left( \frac{1-2y-z}{4}  \right)}^{i-1}.
\end{aligned}
\end{equation*}
The assertion follows now from \Cref{lemma:gradfgh} by using the cross product.
\end{proof}
\begin{lemma}\label{lemma:gnfnh}
    With the assumptions of \Cref{lemma:gradfgh},
  \begin{equation*}
    j \, g_{i,j} \Grad h_{i,j,k} \times \Grad f_{i} = {\left( y (q_j^{y,i})'(\chi)+\frac{z-1}{2} \tilde{Q}_{j-1}^{y,i}(\chi)\right)}
   {\left( \frac{1-2y-z}{4}  \right)}^{i-1} {\left(\frac{1-z}{2}\right)}^{j-1} (q_k^{z,i,j})'(z)    
   \begin{pmatrix} -\frac{\tilde{Q}_{i-1}^x(\eta)}{2} \\ (q_{i}^x)'(\eta) \\ 0  \end{pmatrix}.
  \end{equation*}  
\end{lemma}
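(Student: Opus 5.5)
We mirror the proof of the preceding lemma: first rewrite the scalar factor $j\,g_{i,j}$ with \eqref{eq:DiffFormel}, then take the cross product of the two gradients given in \Cref{lemma:gradfgh}.

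\textbf{Step 1 (rewriting the scalar factor).} Applying \eqref{eq:DiffFormel} to $q_j^{y,i}$ at $\chi=\frac{2y}{1-z}$ gives $j\,q_j^{y,i}(\chi)=\chi (q_j^{y,i})'(\chi)-\tilde{Q}_{j-1}^{y,i}(\chi)$. Multiplying by $\left(\frac{1-z}{2}\right)^{j}$ and absorbing one factor $\frac{1-z}{2}$ into the bracket, we get
\[
j\, g_{i,j}=\left(y (q_j^{y,i})'(\chi)+\frac{z-1}{2}\tilde{Q}_{j-1}^{y,i}(\chi)\right)\left(\frac{1-z}{2}\right)^{j-1}.
\]
Here we used $\chi\frac{1-z}{2}=y$ and $-\frac{1-z}{2}=\frac{z-1}{2}$.

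\textbf{Step 2 (cross product).} By \Cref{lemma:gradfgh}, $\Grad h_{i,j,k}=(q_k^{z,i,j})'(z)\,(0,0,1)^\top$ and $\Grad f_i=\left(\frac{1-2y-z}{4}\right)^{i-1}\left((q_i^x)'(\eta),\tfrac12\tilde{Q}_{i-1}^x(\eta),\tfrac14\tilde{Q}_{i-1}^x(\eta)\right)^\top$. For $a=(a_1,a_2,a_3)^\top$ we have $e_3\times a=(-a_2,a_1,0)^\top$. Hence
\[
\Grad h_{i,j,k}\times\Grad f_i=(q_k^{z,i,j})'(z)\left(\frac{1-2y-z}{4}\right)^{i-1}\left(-\tfrac12\tilde{Q}_{i-1}^x(\eta),\,(q_i^x)'(\eta),\,0\right)^\top .
\]
The third component $\tfrac14\tilde{Q}_{i-1}^x$ of $\Grad f_i$ drops out.

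\textbf{Step 3 (combination).} Multiplying the scalar from Step 1 by the vector from Step 2 gives exactly the stated formula.

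The only point needing care is the sign and orientation of the cross product. The lemma uses the order $\Grad h\times\Grad f$, which is the negative of the $\Grad f\times\Grad h$ appearing in \eqref{eq:Defansatz}, so the first component must indeed carry the minus sign. Everything else is direct substitution.
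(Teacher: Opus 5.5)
Your proof is correct and follows the paper's argument: rewrite $j\,g_{i,j}$ via \eqref{eq:DiffFormel} using $\chi\frac{1-z}{2}=y$, then take the cross product with the gradients from \Cref{lemma:gradfgh}. The only difference is that you write out the computation $e_3\times a=(-a_2,a_1,0)^\top$, which the paper leaves implicit.
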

\begin{proof}
The proof is similar to the proof of the previous one.
We simplify 
\begin{eqnarray*}
    jg_{i,j}(x,y,z)=j q_j^{y,i}(\chi)  {\left( \frac{1-z}{2}  \right)}^{j}& \overset{\eqref{eq:DiffFormel}}{=}
{\left(\chi (q_j^{y,i})' (\chi)- \tilde{Q}_{j-1}^{y,i}(\chi) \right)} {\left( \frac{1-z}{2}  \right)}^{j}\\
&\overset{\chi=\frac{2y}{1-z}}{=} {\left( y (q_j^{y,i})'(\chi)+\frac{z-1}{2} \tilde{Q}_{j-1}^{y,i}(\chi)\right)}
   {\left( \frac{1-z}{2}  \right)}^{j-1}.
\end{eqnarray*}
The assertion follows now from \Cref{lemma:gradfgh} by using the cross product.
\end{proof}
\begin{lemma}
    With the assumptions of \Cref{lemma:gradfgh},
  \begin{eqnarray*}
   k \, h_{i,j,k} \Grad f_{i} \times \Grad g_{i,j} &=& {\left( z (q_k^{z,i,j})'(z)-\tilde{Q}_{k-1}^{z,i,j}(z)\right)}\\
    &&{\left( \frac{1-2y-z}{4}  \right)}^{i-1} {\left(\frac{1-z}{2}\right)}^{j-1} 
   \begin{pmatrix} \frac14 \tilde{Q}_{i-1}^x(\eta)(\tilde{Q}_{j-1}^{y,i}(\chi)-(q_{j}^y)'(\chi) )  \\ 
   -\frac12 (q_{i}^x)'(\eta) \tilde{Q}_{j-1}^{y,i}(\chi) \\ (q_{i}^x)'(\eta) (q_{j}^{y,i})'(\chi)  \end{pmatrix}.
   \end{eqnarray*}  
\end{lemma}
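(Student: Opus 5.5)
We follow the pattern of the two preceding lemmas. First we rewrite the scalar prefactor $k\,h_{i,j,k}$ with the help of \eqref{eq:DiffFormel}, and then we compute the cross product $\Grad f_i\times\Grad g_{i,j}$ from the explicit gradients in \Cref{lemma:gradfgh}.

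For the scalar part, recall that $h_{i,j,k}(z)=q_k^{z,i,j}(z)$ has no collapsed coordinate. Applying \eqref{eq:DiffFormel} directly with $\chi=z$ gives
\[
k\,h_{i,j,k}(z)=z\,(q_k^{z,i,j})'(z)-\tilde{Q}_{k-1}^{z,i,j}(z),
\]
which is exactly the first factor in the claim. Unlike the previous two lemmas, no extra power of a collapsed factor is absorbed here.

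For the vector part, \Cref{lemma:gradfgh} gives
\[
\Grad f_i\times\Grad g_{i,j}={\left(\tfrac{1-2y-z}{4}\right)}^{i-1}{\left(\tfrac{1-z}{2}\right)}^{j-1}\, a\times b,
\]
where $a=\big((q_i^x)'(\eta),\tfrac12\tilde{Q}_{i-1}^x(\eta),\tfrac14\tilde{Q}_{i-1}^x(\eta)\big)^\top$ and $b=\big(0,(q_j^{y,i})'(\chi),\tfrac12\tilde{Q}_{j-1}^{y,i}(\chi)\big)^\top$. We evaluate the cross product componentwise:
\begin{align*}
(a\times b)_1&=\tfrac12\tilde{Q}_{i-1}^x\cdot\tfrac12\tilde{Q}_{j-1}^{y,i}-\tfrac14\tilde{Q}_{i-1}^x\,(q_j^{y,i})'
=\tfrac14\tilde{Q}_{i-1}^x\big(\tilde{Q}_{j-1}^{y,i}-(q_j^{y,i})'\big),\\
(a\times b)_2&=\tfrac14\tilde{Q}_{i-1}^x\cdot 0-(q_i^x)'\cdot\tfrac12\tilde{Q}_{j-1}^{y,i}=-\tfrac12(q_i^x)'\,\tilde{Q}_{j-1}^{y,i},\\
(a\times b)_3&=(q_i^x)'\,(q_j^{y,i})'-\tfrac12\tilde{Q}_{i-1}^x\cdot 0=(q_i^x)'\,(q_j^{y,i})'.
\end{align*}
These three entries coincide with the vector in the statement.

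Multiplying the scalar and vector parts yields the claim. The computation is routine; the only points that need care are the sign and ordering conventions in the cross product and checking that $(a\times b)_1$ combines into the stated factorised form. The notation $(q_j^y)'$ in the statement is understood as $(q_j^{y,i})'$.
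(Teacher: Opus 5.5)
Your proof is correct and follows the paper's argument: rewrite $k\,h_{i,j,k}$ via \eqref{eq:DiffFormel}, then take the cross product of the gradients from \Cref{lemma:gradfgh}, and all three of your component computations check out. Your reading of $(q_j^y)'$ as $(q_j^{y,i})'$ is the intended one.
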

\begin{proof}
    We start with $h_{i,j,k}$. Using~\eqref{eq:DiffFormel} again
        $k h_{i,j,k}(z)= k q_k^{z,i,j}(z)=
        z (q_k^{z,i,j})'(z)- \tilde{Q}_{k-1}^{z,i,j}(z)$
    with $\tilde{Q}^{z,i,j}_{k-1} \in \Po_{k-1}$.
    Using \Cref{lemma:gradfgh}, 
    \begin{equation*}
    \Grad g_{i,j}=   {\left(\frac{1-z}{2}\right)}^{j-1}\begin{pmatrix}0\\ (q_{j}^{y,i})' {\left(\chi\right)}\\ 
    \frac{1}{2} \tilde{Q}_{j-1}^{y,i} (\chi)  \end{pmatrix} \\
    \end{equation*}
    The assertion follows now from taking the cross product.
\end{proof}
Concerning the involved terms for $w^{\blacktriangle,I}_{1jk}$ similar results can be proved.
We summarize them in one lemma.
\begin{lemma}\label{lemma:grad1gh}
    Let the assumptions of \Cref{lemma:gradfgh} be satisfied. Moreover, let
$v^{\ned_0}_{[1,2]}=-\frac18 \begin{pmatrix} 1-2y-z & 2x & x\end{pmatrix}^\top$ be the \nedelec~function.
Then,  $g_{1,j} h_{1,j,k}  \Curl v^{\ned_0}_{[1,2]}, h_{1,j,k} \Grad g_{1,j} \times  v^{\ned_0}_{[1,2]},
 g_{1,j} \Grad h_{1,j,k} \times  v^{\ned_0}_{[1,2]}\in[\Po_{j+k}]^3$, where
\[
    \begin{aligned}
        k g_{1,j} h_{1,j,k}  \Curl v^{\ned_0}_{[1,2]} &= \frac{-z}{8} (q_k^{z,1,j})'(z) q_j^{y,1} \left(\frac{2y}{1-z}\right)
        \left( \frac{1-z}{2}\right)^j &\begin{pmatrix} 0 \\ -2 \\ 4 \end{pmatrix} & +\vec{v}_{1,j+k-1} ,\\
        k h_{1,j,k} \Grad g_{1,j} \times  v^{\ned_0}_{[1,2]} &= \frac{-z}{8} (q_k^{z,1,j})'(z) \left( \frac{1-z}{2}\right)^{j-1}& \begin{pmatrix} x ((q_j^{y,1})'-\tilde{Q}_{j-1}^{y,1})(\chi) ,\\
        -\frac12(2y+z) \tilde{Q}_{j-1}^{y,1}(\chi)\\ (2y+z) (q_j^{y,1})'(\chi)  \end{pmatrix}&+\vec{v}_{2,j+k-1} \\
        -(j+2) g_{1,j} \Grad h_{1,j,k} \times  v^{\ned_0}_{[1,2]} &= -\frac18 (q_k^{z,1,j})'(z)
        u(y,z) &\begin{pmatrix} 
        2x \\ 2y+z \\ 0\end{pmatrix} &+\vec{v}_{3,j+k-1}
    \end{aligned}
    \]
    with $\vec{v}_{l,s}\in[\Po_s]^3$, $l=1,2,3$ and
    $u(y,z)=\left( 2 q_j^{y,1} (\chi)\left( \frac{1-z}{2}\right)^j + (y (q_j^{y,1})'(\chi)+\frac{z}{2}\tilde{Q}_{j-1}^{y,1}(\chi)\left( \frac{1-z}{2}\right)^{j-1} \right)$.
\end{lemma}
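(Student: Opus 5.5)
The plan is a direct computation: write each of the three products componentwise using the explicit form of $v^{\ned_0}_{[1,2]}$ and \Cref{lemma:gradfgh} with $i=1$. In each product, the factor that carries no derivative ($h_{1,j,k}$, and in the third term $g_{1,j}$) is then rewritten by \eqref{eq:DiffFormel}. After that, every summand of lower total degree is pushed into the remainders $\vec v_{l,j+k-1}$.

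Throughout, the degree count rests on one fact: $(q_j^{y,1})'(\chi)\left(\frac{1-z}{2}\right)^{j-1}$ and $\tilde{Q}_{j-1}^{y,1}(\chi)\left(\frac{1-z}{2}\right)^{j-1}$ are polynomials of degree at most $j-1$ in $(y,z)$, because the homogenising power absorbs the denominator of $\chi$. Also $g_{1,j}\in\Po_j$, $h_{1,j,k}\in\Po_k$, and $v^{\ned_0}_{[1,2]}$ is linear. This gives the membership in $[\Po_{j+k}]^3$ immediately, since each product has $j$ or $j-1$ degrees from the $g$-part, $k$ or $k-1$ from the $h$-part, and at most one from $v^{\ned_0}_{[1,2]}$ or its curl.

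For the first identity, compute $\Curl v^{\ned_0}_{[1,2]}=-\frac18(0,-2,4)^\top$, a constant. Write $k h_{1,j,k}=z(q_k^{z,1,j})'(z)-\tilde{Q}^{z,1,j}_{k-1}(z)$. The $\tilde{Q}_{k-1}$ part times $g_{1,j}$ lies in $\Po_{j+k-1}$, and this gives the claim.

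For the second identity, take the cross product of $\Grad g_{1,j}=\left(\frac{1-z}{2}\right)^{j-1}(0,(q_j^{y,1})',\frac12\tilde{Q}_{j-1}^{y,1})^\top$ with $-\frac18(1-2y-z,2x,x)^\top$. This yields $-\frac18\left(\frac{1-z}{2}\right)^{j-1}$ times the vector
\[
\bigl(x((q_j^{y,1})'-\tilde{Q}_{j-1}^{y,1}),\ \tfrac12\tilde{Q}_{j-1}^{y,1}(1-2y-z),\ -(q_j^{y,1})'(1-2y-z)\bigr).
\]
Split $1-2y-z=1-(2y+z)$. The parts coming from the constant $1$ have degree $j-1$, so after multiplying by $kh_{1,j,k}$ they go into the remainder. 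Replacing $kh_{1,j,k}$ by $z(q_k^{z,1,j})'$ in the same way as before then gives the stated leading term.

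For the third identity, use $\Grad h_{1,j,k}\times v=(q_k^{z,1,j})'(-v_2,v_1,0)^\top$. This makes $-(j+2)g_{1,j}\Grad h_{1,j,k}\times v^{\ned_0}_{[1,2]}$ equal to
\[
-\tfrac18(j+2)g_{1,j}(q_k^{z,1,j})'\,(2x,\,2y+z,\,0)^\top
\]
plus a term with the constant $(0,1,0)$ of degree $j+k-1$. Next write $(j+2)g_{1,j}=2g_{1,j}+jg_{1,j}$ and expand $jg_{1,j}$ as in the proof of \Cref{lemma:gnfnh}, namely
\[
jg_{1,j}=\Bigl(y(q_j^{y,1})'+\tfrac{z-1}{2}\tilde{Q}_{j-1}^{y,1}\Bigr)\left(\tfrac{1-z}{2}\right)^{j-1}.
\]
Dropping the $-\frac12\tilde{Q}_{j-1}^{y,1}\left(\frac{1-z}{2}\right)^{j-1}$ piece, which has degree $j-1$, produces exactly $u(y,z)$.

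The main obstacle is purely bookkeeping. One must check in each case that every discarded term really has total degree at most $j+k-1$ after multiplication by the remaining factors, and must track the signs in the cross products carefully.
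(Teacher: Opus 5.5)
Your proposal is correct and follows the same route as the paper. It is a direct componentwise computation using \Cref{lemma:gradfgh} with $i=1$ and the identity \eqref{eq:DiffFormel}, which replaces $k h_{1,j,k}$ by $z(q_k^{z,1,j})'(z)$ modulo lower degree, and for the third term the splitting $(j+2)g_{1,j}=2g_{1,j}+j g_{1,j}$ with $j g_{1,j}$ rewritten as in \Cref{lemma:gnfnh}. You also spell out the cross products, the constant curl $-\frac18(0,-2,4)^\top$ and the degree bookkeeping, which the paper's two-line proof leaves to the reader.
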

\begin{proof}
The first two assertions are a direct consequence of \Cref{lemma:gradfgh} and the structure of the 
N\'ed\'elec function $v^{\ned_0}_{[1,2]}$ by using \eqref{eq:DiffFormel}.
For the last equation, we simplify the part $j g_{1,j}$ as in \Cref{lemma:gnfnh}.
\end{proof}

Now, we are in the position to prove the main result \Cref{main:Hdiv} of this section.
\begin{proof}
We start with the case $i\geq 2$.
    Note that 
    \[
    \mathcal{R}_2:=\tilde{Q}_{k-1}^{z,i,j}(z)
   {\left( \frac{1-2y-z}{4}  \right)}^{i-1} {\left(\frac{1-z}{2}\right)}^{j-1} 
   \begin{pmatrix} \frac14 \tilde{Q}_{i-1}^x(\eta)(\tilde{Q}_{j-1}^{y,i}(\chi)-(q_{j}^y)'(\chi) )  \\ 
   -\frac12 (q_{i}^x)'(\eta) \tilde{Q}_{j-1}^{y,i}(\chi) \\ (q_{i}^x)'(\eta) (q_{j-1}^{y,i})'(\chi)  \end{pmatrix}
    \]
    is a polynomial of degree $i+j+k-3$. Therefore, we consider
\[
w_{0,ijk}(x,y,z)=i \, f_i \Grad g_{i,j} \times \Grad h_{i,j,k} - 
   j\, g_{i,j} \Grad f_i \times \Grad h_{i,j,k} + k \, h_{i,j,k} \Grad f_{i} \times \Grad g_{i,j}+\mathcal{R}_2
\]
Using the previous lemmas
\[
w_{0,ijk}(x,y,z)={\left( \frac{1-2y-z}{4}  \right)}^{i-1} {\left(\frac{1-z}{2}\right)}^{j-1} (q_{k}^{(z,i,j)})'(z) \mathcal{R}_3(x,y,z)
\]
with
\begin{equation*}
\begin{aligned}
\mathcal{R}_3(x,y,z)&:={\left(x (q_{i}^x)'(\eta)+ \frac{(2y+z-1)}{4}\tilde{Q}_{i-1}^x(\eta)\right)}
   (q_{j}^{y,i})'(\chi)    \begin{pmatrix} 1 \\ 0 \\ 0  \end{pmatrix}\\
   &+{\left( y (q_{j}^{y,i})'(\chi)-\frac{1-z}{2} \tilde{Q}_{j-1}^{y,i}(\chi)\right)}
     \begin{pmatrix} -\frac{\tilde{Q}_{i-1}^x(\eta)}{2} \\ (q_i^{x})'(\eta) \\ 0  \end{pmatrix} 
    + 
   z\begin{pmatrix} \frac14 \tilde{Q}_{i-1}^x(\eta){(\tilde{Q}_{j-1}^{y,i}(\chi)-(q_{j}^y)'(\chi))}  \\ 
   -\frac12 (q_{i}^x)'(\eta) \tilde{Q}_{j-1}^{y,i}(\chi) \\ (q_{i}^x)'(\eta) (q_{j}^{y,i})'(\chi)  \end{pmatrix}\\
   &= (q_{i}^x)'(\eta) (q_{j}^{y,i})'(\chi) \begin{pmatrix} x \\ y \\z  \end{pmatrix} +
   \begin{pmatrix} \frac14 \tilde{Q}_{i-1}^x(\eta){(\tilde{Q}_{j-1}^{y,i}(\chi)-(q_j^{y,i})'(\chi))}  \\ 
   -\frac12 (q_i^x)'(\eta) \tilde{Q}_{j-1}^{y,i}(\chi) \\ 0    \end{pmatrix}.
\end{aligned}
\end{equation*}
This gives
\begin{equation*}
\begin{aligned}
w_{0,ijk}(x,y,z)= (q_{i}^x)'(\eta){\left( \frac{1-2y-z}{4}  \right)}^{i-1} (q_{j}^{y,i})'(\chi) {\left(\frac{1-z}{2}\right)}^{j-1} (q_{k}^{z,i,j})'(z) \vec{r}\\
+ \underbrace{{\left( \frac{1-2y-z}{4}  \right)}^{i-1} {\left(\frac{1-z}{2}\right)}^{j-1} 
\begin{pmatrix} \frac14 \tilde{Q}_{i-1}^x(\eta)(\tilde{Q}_{j-1}^{y,i}(\chi)-(q_j^{y,i})'(\chi))  \\ 
   -\frac12 (q_i^x)'(\eta) \tilde{Q}_{j-1}^{y,i}(\chi) \\ 0    \end{pmatrix}
(q_{k}^{z,i,j})'(z)}_{\in \Po_{i+j+k-3}},
\end{aligned}
\end{equation*}
and proves the main result for $i\geq 2$.
The case $i=1$ is a direct consequence of \cref{lemma:grad1gh} by ignoring the minor terms
$\vec{v}_{l,j+k-1}$, $l=1,2,3$.
\end{proof}
This construction is satisfied for any polynomial basis in the auxiliary functions \eqref{eq:defaux3d}, 
which follows the construction principle in \eqref{Div:TetBasis2}. But there are basis functions 
which have advantages in the condition number and sparsity of the element matrices.
The optimal choice with regard to sparsity are the Jacobi polynomials
\begin{equation}
\label{def:aux2Jacmod}
g_{i,j}(y,z)=\phat_j^{2i-1}\left(\frac{2y}{1-z}\right)\left( \frac{1-z}{2}\right)^j,
\textrm{and }
h_{i,j,k}(z)=\phat_k^{2i+2j-2}(z)
\end{equation}
with the Jacobi weights $2i-1$ and $2i+2j-2$ applied in the definitions of $g_{i,j}$ and $h_{i,j,k}$, respectively, \cite{beuchler2012a}.
The aim of the next section is to develop biorthogonal functions to them in closed formulas.
In order to get full biorthogonality, we have to modify the weights in the Jacobi polynomials.
In particular the choices \eqref{def:aux2Jac},  as for $H(\Curl)$, are used.
Again, the system matrix is sparse with $\mathcal{O}(p^3)$ nonzero entries, although the constant is larger than the optimal choice \eqref{def:aux2Jacmod}. Similar properties can also be observed for the condition number.\\
This leads to the following basis functions
\begin{equation}\label{eq:DefHdiv}
  \begin{aligned}
    w_{ijk}^{\blacktriangle,II,\ned}&:= \Curl v_{ijk}^{\blacktriangle,II,\ned} =i f_i \nabla g_{i,j} \times \nabla h_{i,j,k}+ 
     j g_{i,j} \nabla h_{i,j,k} \times \nabla f_i-(i+j)  h_{i,j,k} \nabla f_{i} \times \nabla g_{i,j}\\
    w_{ijk}^{\blacktriangle,III,\ned}&:=\Curl v_{ijk}^{\blacktriangle,III,\ned} =-i f_i \nabla g_{i,j} \times \nabla h_{i,j,k}+(i+k)
      g_{i,j} \nabla h_{i,j,k} \times \nabla f_i-k  h_{i,j,k} \nabla f_{i} \times \nabla g_{i,j} \\
    w_{1jk}^{\blacktriangle,IV}&:= \Curl v_{1jk}^{\blacktriangle,IV} = \Curl {\left(v^{\ned_0}_{[1,2]} g^\ned_{j} h^{\ned,j}_{k}\right)},\\
     w_{ijk}^{\blacktriangle,I}&:=i f_i \nabla g_{i,j} \times \nabla h_{i,j,k}+ 
     j g_{i,j} \nabla h_{i,j,k} \times \nabla f_i+k  h_{i,j,k} \nabla f_{i} \times \nabla g_{i,j},  
     \\ 
    w_{1jk}^{\blacktriangle,I}&:= (j+2)g^\ned_{j} \left(\nabla h^{\ned,j}_{k}\times v^{\ned_0}_{[1,2]}\right)
    -k h^{\ned,j}_{k} \left(\nabla  g^\ned_{j}\times v^{\ned_0}_{[1,2]}\right)-k g^\ned_{j} h^{\ned,j}_{k} \Curl v^{\ned_0}_{[1,2]} 
    \quad\textrm{and}\\
   w_{10k}^{\blacktriangle,I}&:= 4 w_1^{\blacktriangle,F_1}(x,y,z) \phat^4_k(z),
    \end{aligned}
\end{equation}
where $g^\ned_{j}(y,z)=\phat_j^{3}\left(\frac{2y}{1-z}\right)\left( \frac{1-z}{2}\right)^j$ and
$h^{\ned,j}_{k}(z)=\phat_k^{2j+3}(z)$.

\section{Biorthogonal functions}\label{sec:Biorthogonal}
We follow the same ansatz as in~\cite{haubold2024} to derive the biorthogonal functions to the $H(\Div)$ basis functions. 
First we derive the biorthogonal functions to the auxiliary set of basis functions and then combine them. 

We recall the auxiliary functions from \Cref{sec:newned} and their gradients of the previous section.
In the following, we investigate the functions \eqref{eq:DefHdiv}.
Using~\eqref{Div:TetBasis}, \eqref{Div:TetBasis2}, the element based basis functions of types $I$, $II$ and $III$ for $i\geq 2$ are linear combinations 
of the functions 
\[
\widetilde{w}_{ijk}^I := f_i(\Grad g_{i,j} \times \Grad h_{i,j,k}),\quad
\widetilde{w}_{ijk}^{II} := g_{i,j}(\Grad h_{i,j,k} \times \Grad f_i),  \textrm{ and }
\widetilde{w}_{ijk}^{III} := h_{i,j,k}(\Grad f_i \times \Grad g_{i,j}).
\]
In the same way, the element based functions of types $w^{\blacktriangle,s}_{ijk}$, $s\in\{I,IV\}$
are linear combinations of
\[
\widetilde{w}_{1jk}^{IV}=g_{1,j}(y,z) h_{1,j,k}(z) \Curl v^{\ned_0}_{[1,2]} + h_{1,j,k}(z) \nabla (g_{1,j}(y,z))\times v^{\ned_0}_{[1,2]} 
\quad\textrm{and}\quad
\widetilde{w}_{1jk}^{I}=g_{1,j}(y,z) \nabla (h_{1,j,k}(z)) \times v^{\ned_0}_{[1,2]} 
\]

Therefore, the functions $\widetilde{w}_{ijk}^{(s)}$, $s\in\{I,II,III\}$ and $\widetilde{w}_{1jk}^{(s)}$, $s\in\{I,IV\}$ are considered in a first step.

Then, one obtains from the computations
of the previous section the relations
\begin{equation}\label{eq:aux_def}
    \begin{aligned}
    \widetilde{w}^I_{ijk} &\coloneqq f_i(\Grad g_{i,j} \times \Grad h_{i,j,k}) =&&{\left(\frac{1-2y-z}{4}\right)}^{i} {\left(\frac{1-z}{2}\right)}^{j-1} \lhat_i{\left(\frac{4x}{1-2y-z}\right)} \begin{pmatrix}
            P_{j-1}^{(2i,0)} {\left(\frac{2y}{1-z}\right)} P_{k-1}^{(2i+2j,0)}(z)\\0\\0
        \end{pmatrix}\\
    \widetilde{w}^{II}_{ijk} &\coloneqq g_{i,j}(\Grad h_{i,j,k} \times \Grad f_i) =&&{\left(\frac{1-2y-z}{4}\right)}^{i-1} {\left(\frac{1-z}{2}\right)}^{j} \phat_{j}^{2i}{\left(\frac{2y}{1-z}\right)}P_{k-1}^{(2i+2j,0)}(z) \begin{pmatrix}
            -\frac12 L_{i-2} {\left( \frac{4x}{1-2y-z}\right)} \\ L_{i-1} {\left( \frac{4x}{1-2y-z}\right)} \\ 0
            \end{pmatrix}\\
    \widetilde{w}^{III}_{ijk} &\coloneqq h_{i,j,k}(\Grad f_i \times \Grad g_{i,j}) =&& {\left(\frac{1-2y-z}{4}\right)}^{i-1} {\left(\frac{1-z}{2}\right)}^{j-1} \phat^{2i+2j}_k(z) \\ & &&\begin{pmatrix}
            \frac12 L_{i-2}{\left(\frac{4x}{1-2y-z}\right)}{\left[ {\left(\frac{y}{1-z}-\frac12 \right)} P_{j-1}^{(2i,0)}{\left(\frac{2y}{1-z}\right)} - \frac{j}{2} \phat_{j}^{2i}{\left(\frac{2y}{1-z}\right)}\right]}\\
           -L_{i-1}{\left(\frac{4x}{1-2y-z}\right)} \left[{\left(\frac{y}{1-z}\right)} P_{j-1}^{(2i,0)}{\left(\frac{2y}{1-z}\right)} - \frac{j}{2} \phat_{j}^{2i}{\left(\frac{2y}{1-z}\right)}\right]\\
           L_{i-1}{\left(\frac{4x}{1-2y-z}\right)}P_{j-1}^{(2i,0)} {\left(\frac{2y}{1-z}\right)} .
        \end{pmatrix}.
        \end{aligned}
        \end{equation}
Note that the last functions can be decomposed as 
\begin{equation}\label{eq:aux_def3}
\widetilde{w}^{III}_{ijk} ={\left(\frac{1-2y-z}{4}\right)}^{i-1} {\left(\frac{1-z}{2}\right)}^{j-1} \phat^{2i+2j}_k(z) 
\widetilde{r}^{III}_{ijk}
\end{equation}
with
\begin{eqnarray*}
\widetilde{r}^{III}_{ijk}&=&P_{j-1}^{(2i,0)}
        {\left(\frac{2y}{1-z}\right)} 
        \begin{pmatrix}
            -\frac14 L_{i-2} {\left( \frac{4x}{1-2y-z}\right)} \\ 0 \\ L_{i-1} {\left( \frac{4x}{1-2y-z}\right)} 
        \end{pmatrix} \\
        &&+
        {\left[ {\left(\frac{y}{1-z} \right)} P_{j-1}^{(2i,0)}{\left(\frac{2y}{1-z}\right)} - \frac{j}{2} \phat_{j}^{2i}{\left(\frac{2y}{1-z}\right)}\right]}
        \begin{pmatrix}
            -\frac12 L_{i-2} {\left( \frac{4x}{1-2y-z}\right)} \\ L_{i-1} {\left( \frac{4x}{1-2y-z}\right)} \\ 0
            \end{pmatrix}.
\end{eqnarray*}
Using  $v^{\ned_0}_{[1,2]} = -\frac18 \begin{pmatrix} 1 -2y-z\\2x\\ x \end{pmatrix} $
and the weights $\alpha_1=3$, $\beta_{1j}=2j+3$ for the involved Jacobi polynomials, one obtains
\begin{equation}
\label{def:tildew1jk}
\begin{aligned}
\widetilde{w}^{I}_{1jk}&= \frac{1}{8} \phat_j^{3}\left( \frac{2y}{1-z}\right) \left( \frac{1-z}{2}\right)^j 
P_{k-1}^{(2j+3,0)}(z) \begin{pmatrix} 2x \\ 2y+z-1 \\ 0 \end{pmatrix} \\
\widetilde{w}^{IV}_{1jk}&= \phat_k^{2j+3}(z) \left( \frac{1-z}{2}\right)^{j-1} \left\{
-\frac{1-z}{8} \phat_j^3\left( \frac{2y}{1-z}\right) \begin{pmatrix} 0 \\ -1 \\ 2\end{pmatrix} 
+\frac18 \begin{pmatrix}  1-2y-z \\ 2x \\x \end{pmatrix} \times \begin{pmatrix} 
0 \\ P_{j-1}^{(3,0)}\left( \frac{2y}{1-z}\right)\\ g_{9,jk}(y,z)\end{pmatrix}
\right\}.\\
\end{aligned}
\end{equation}
with the function $g_{9,jk}=\frac12 \left( \frac{2y}{1-z} P_{j-1}^{(3,0)}\left(\frac{2y}{1-z}\right)
-j \phat^3_j \left(\frac{2y}{1-z}\right)\right)$.
Next,  the substitutions $\eta=\frac{4x}{1-2y-z}$ and $\chi=\frac{2y}{1-z}$ are applied to
the last equation. Using \eqref{eq:BeuPill17} for the last, \eqref{eq:BeuPill17a} for the second and \eqref{eq:Rec_BP} together with \eqref{eq:NedJac} for the first component allows us to simplify
\begin{equation}\label{eq:twIV}
  \widetilde{w}^{IV}_{1jk}  =\frac18 \phat_k^{2j+3}(z) \left( \frac{1-z}{2}\right)^{j} 
\begin{pmatrix} \frac{-4x}{1-z} P_{j-1}^{(2,0)}(\chi)
\\ \frac{2(2j+2)}{2j+1} P_{j-1}^{(1,0)}(\chi) + \frac{2}{2j+1} P_j^{(1,0)}(\chi) \\ -4 P_{j}^{(1,0)}(\chi) \end{pmatrix} .
\end{equation}

As we can see, the system $\lbrace \widetilde{w}^I, \widetilde{w}^{II}, \widetilde{w}^{III}\rbrace$ is an upper triangular system, which means that the biorthogonal system $\lbrace \widetilde{b}^I,\widetilde{b}^{II}, \widetilde{b}^{III} \rbrace$ is given by a lower triangular system. 
The orthogonality is meant in the sense (up to some scaling)
\[
    \langle \widetilde{w}^{l}_{i_1,j_1,k_1} , \widetilde{b}^{m}_{i_2,j_2,k_2} \rangle = \int_{\blacktriangle} w^{l}_{i_1,j_1,k_1}(x,y,z)~ \widetilde{b}_{i_2,j_2,k_2}^{m}(x,y,z)~\mathrm{d}\vec{r} = \delta_{l,m} \delta_{i_1,i_2} \delta_{j_1,j_2} \delta_{k_1,k_2}.
\]
We can directly choose $\widetilde{b}^{III}$ since only $\widetilde{w}^{III}$ has a third component, i.e.
\begin{equation*}
\widetilde{b}^{III}_{ijk} = {\left(\frac{1-2y-z}{4}\right)}^{i} {\left(\frac{1-z}{2}\right)}^{j-1} L_{i-1}{\left(\frac{4x}{1-2y-z}\right)} P_{j-1}^{(2i,0)} {\left(\frac{2y}{1-z}\right)} P^{(2i+2j-1,1)}_{k-1}(z) ~ \begin{pmatrix} 0 \\ 0 \\ 1 \end{pmatrix}.
\end{equation*}
Then, using the abbreviations $\omega_x={\left(\frac{1-2y-z}{4} \right)}$, $\omega_y={\left( \frac{1-z}{2}\right)}$,
$\eta=\frac{4x}{1-2y-z}$ and $\chi=\frac{2y}{1-z}$
\begin{equation*}
\begin{aligned}
    \langle \widetilde{w}^{III}_{i_1,j_1,k_1} , \widetilde{b}^{III}_{i_2,j_2,k_2} \rangle=&\int_{\blacktriangle} \widetilde{w}^{III}_{i_1,j_1,k_1}(x,y,z)~ \widetilde{b}_{i_2,j_2,k_2}^{III}(x,y,z)~\mathrm{d}\vec{r} \\
    =&\int_{\blacktriangle} \omega_x^{i_1+i_2-1} L_{i_1-1}(\eta) L_{i_2-1}(\eta) \omega_y^{j_1+j_2-2} 
    P_{j_1-1}^{(2i_1,0)}(\chi) P_{j_2-1}^{(2 i_2,0)}(\chi) \hat{P}_{k_1}^{2i_1+2j_1}(z) P_{k_2-1}^{(2i_2+2j_2-1,1)}(z) \\
    \overset{\textrm{Duffy}}{=} &\int_{-1}^1 L_{i_1-1}(\eta) L_{i_2-1}(\eta) \;\mathrm{d}\eta 
    \int_{-1}^1 {\left(\frac{1-\chi}{2}\right)}^{i_1+i_2-1+1} P_{j_1-1}^{(2i_1,0)}(\chi) P_{j_2-1}^{(2 i_2,0)}(\chi) \;\mathrm{d}\chi\\
    &\int_{-1}^1 \omega_y^{i_1-1+i_2+j_1-1+j_2-1+2} \hat{P}_{k_1}^{2i_1+2j_1}(z) P_{k_2-1}^{(2i_2+2j_2-1,1)}(z) \;\mathrm{d}z
    \end{aligned}
\end{equation*}
Using the orthogonality of the Legendre and Jacobi polynomials~\eqref{Orthogonality},
the first two integrals are simplified to
\begin{equation*}
    \begin{aligned}
        \langle \widetilde{w}^{III}_{i_1,j_1,k_1} , \widetilde{b}^{III}_{i_2,j_2,k_2} \rangle&=&
        \delta_{i_1,i_2}  \int_{-1}^1 {\left(\frac{1-\chi}{2}\right)}^{2i_1} P_{j_1-1}^{(2i_1,0)}(\chi) 
        P_{j_2-1}^{(2 i_1,0)}(\chi) \;\mathrm{d}\chi \\
        &&\int_{-1}^1 \omega_y^{i_1-1+i_2+j_1-1+j_2-1+2} \hat{P}_{k_1}^{2i_1+2j_1}(z) P_{k_2-1}^{(2i_2+2j_2-1,1)}(z) \;\mathrm{d}z \\
        &=&\delta_{i_1,i_2} \delta_{j_1,j_2}  \int_{-1}^1 \omega_y^{2i_1+2j_1-1} \hat{P}_{k_1}^{2i_1+2j_1}(z) P_{k_2-1}^{(2i_1+2j_1-1,1)}(z) \;\mathrm{d}z.
    \end{aligned}
\end{equation*}
Finally, the relation~\eqref{dual:IntLeg} is applied.
Together with~\eqref{Orthogonality2} one easily obtains the relation
\begin{equation}
    \label{Ortho:33}
    \begin{aligned}
    \langle \widetilde{w}^{l}_{i_1,j_1,k_1} , \widetilde{b}^{III}_{i_2,j_2,k_2} \rangle&=
    \delta_{i_1,i_2} \delta_{j_1,j_2}  \int_{-1}^1 {\left(\frac{1-z}{2}\right)}^{2i_1+2j_1-1} \frac{1+z}{2} P_{k_1-1}^{(2i_1+2j_1-1,1)}(z) P_{k_2-1}^{(2i_1+2j_1-1,1)}(z) \;\mathrm{d}z \\
    &=
    c_{i_1,j_1,k_1,III} \delta_{i_1,i_2} \delta_{j_1,j_2} \delta_{k_1,k_2} \delta_{m,III}  \quad \textrm{for } m=I,II,III.
    \end{aligned}
\end{equation}
Next, we derive $\widetilde{b}^{II}$. Due to the orthogonality condition, we know that the second component ${(\widetilde{b}^{II}_{ijk})}_2$ is given by
\begin{equation*}
    {(\widetilde{b}^{II}_{ijk})}_2 = {\left(\frac{1-2y-z}{4}\right)}^{i-1} {\left(\frac{1-z}{2}\right)}^{j} L_{i-1}{\left(\frac{4x}{1-2y-z}\right)} P_{j-1}^{(2i-1,1)}{\left(\frac{2y}{1-z}\right)} P_{k-1}^{(2i+2j,0)}(z).
\end{equation*}
Using the same arguments as in the proof of~\eqref{Ortho:33}, it follows that
\[
        \langle {(\widetilde{w}^{II}_{i_1,j_1,k_1})}_2 , {(\widetilde{b}^{II}_{i_2,j_2,k_2})}_2 \rangle =
   c_{i_1,j_1,k_1,II} \delta_{i_1,i_2} \delta_{j_1,j_2} \delta_{k_1,k_2}.
    \]

The first component needs to be zero, due to orthogonality with $w^{I}$, and the third component needs to be chosen such that
\begin{equation}\label{eq:b2}
\begin{aligned}
    0 = \langle \widetilde{w}_{i_1,j_1,k_1}^{III},\widetilde{b}_{i_2,j_2,k_2}^{II} \rangle = 
    \int_\blacktriangle & \omega_x^{i_1-1} \omega_y^{j_1-1} L_{i_1-1}(\eta) \phat^{2i_1+2j_1}_{k_1}(z) 
    \\&\cdot \underbrace{\biggl(\frac12 \left[-\frac{\chi}{2} P_{j-1}^{(2i_1,0)}(\chi) + j_1 \phat_{j_1}^{2i_1}(\chi) \right] {(\widetilde{b}_{i_2j_2k_2}^{II})}_2 + P_{j_1-1}^{(2i_1,0)}(\chi){(\widetilde{b}_{i_2j_2k_2}^{II})}_3\biggr)}_{:=I^{II,III}_{i_1j_1k_1,i_2j_2k_2}} \mathrm{d} \vec{r}
    \end{aligned}
\end{equation}
We follow the same arguments as in~\cite{haubold2024}. Let
\begin{equation*}
    {(\widetilde{b}_{ijk}^{II})}_3 = {\left(\frac{1-2y-z}{4}\right)}^{i-1} {\left(\frac{1-z}{2}\right)}^{j} L_{i-1}{\left(\frac{4x}{1-2y-z}\right)}  P_{k-1}^{(2i+2j,0)}(z) \widehat{d}_{ijk}(y,z),
\end{equation*}
where $\widehat{d}_{ijk}(y,z)$ is chosen such that the term $I^{II,III}_{i_1j_1k_1,i_2j_2k_2}$ in \eqref{eq:b2} becomes zero. 
Next, the definition of ${(\widetilde{b}_{ijk}^{II})}_2$ is inserted into the bracket.
Using the Duffy transformation, the tensorial structure and the orthogonality~\eqref{Orthogonality2} in $\eta$ direction,
one obtains
\[
I^{II,III}_{i_1j_1k_1,i_2j_2k_2}=0 \quad\forall i_1\neq i_2.
\]
Moreover, the integration in $z$-direction  involve the same factors in ${(\widetilde{b}_{ijk}^{II})}_2$ and 
${(\widetilde{b}_{ijk}^{II})}_3$.
Therefore,
we are able to reduce the problem to the integration in $y$ direction, or after Duffy-transformation, in $\chi$ direction.
This yields the following condition
\begin{equation*}
    0=\int_{-1}^1 {\left(\frac{1-\chi}{2}\right)}^{i}\biggl(\left[-\frac{\chi}{2} P_{j_1-1}^{(2i,0)}{\left(\chi\right)} + \frac{j_1}{2} \phat_{j_1}^{2i}{\left(\chi\right)} \right] {\left(\frac{1-\chi}{2}\right)}^{i-1} P_{j_2-1}^{(2i-1,1)}{\left(\chi\right)} + P_{j_1-1}^{(2i,0)}{\left(\chi\right)}\widehat{d}_{ij_2k_2}(\chi)\biggr) \;\mathrm{d}\chi,
\end{equation*}
with $i_1=i_2=i$. This motivates the ansatz
$\widehat{d}_{i_2j_2k_2}(\chi) = \frac{\chi}{2} {\left(\frac{1-\chi}{2}\right)}^{i-1} P_{j_2-1}^{(2i-1,1)}(\chi) - c_{ij_2} {\left(\frac{1-\chi}{2}\right)}^{i} P_{j_2-1}^{(2i,0)}(\chi)$. 
Here, the first term matches the first part of the square bracket.
Concerning the second part of the square bracket, the condition
\begin{equation*}
\begin{aligned}
0=&\int_{-1}^1 {\left(\frac{1-\chi}{2}\right)}^{2i-1} \frac{j_1}{2} \phat_{j_1}^{2i}\left(\chi\right) P_{j_2-1}^{(2i-1,1)}(\chi)-
c_{ij_2}{\left(\frac{1-\chi}{2}\right)}^{2i}P_{j_1-1}^{(2i,0)}{\left(\chi\right)}  P_{j_2-1}^{(2i,0)}(\chi) \;\mathrm{d}\chi\\
\overset{\eqref{dual:IntLeg}}{=}& \int_{-1}^1 {\left(\frac{1-\chi}{2}\right)}^{2i-1} \left(\frac{1+\chi}{2}\right) P_{j_1-1}^{(2i-1,1)}\left(\chi\right) P_{j_2-1}^{(2i-1,1)}(\chi)-
c_{ij_2}{\left(\frac{1-\chi}{2}\right)}^{2i}P_{j_1-1}^{(2i,0)}{\left(\chi\right)}  P_{j_2-1}^{(2i,0)}(\chi)\;\mathrm{d}\chi
\end{aligned}
\end{equation*}
has to be satisfied. 
If $j_1\neq j_2$, the term is zero due to the orthogonality relation of the Jacobi polynomials. If $j_1=j_2$, the term is zero if
one chooses $c_{i,j_2} = \frac{j_2}{2i +j_2 -1}$.
This yields the final results
\begin{equation}
    \label{Ortho:22}
    \begin{aligned}
    \langle \widetilde{w}^{II}_{i_1,j_1,k_1} , \widetilde{b}^{II}_{i_2,j_2,k_2} \rangle&= c_{i_1,j_1,k_1,II}\delta_{i_1,i_2} \delta_{j_1,j_2} 
    \delta_{k_1,k_2} \textrm{ and} \quad
    \langle \widetilde{w}^{III}_{i_1,j_1,k_1} , \widetilde{b}^{II}_{i_2,j_2,k_2} \rangle&= 0.
    \end{aligned}
\end{equation}
The third biorthogonal function $\widetilde{b}^{I}_{ijk}$ can be derived in the same manner. The first component can be chosen due to the orthogonality relation $\langle \widetilde{w}^{I}_{ijk},\widetilde{b}^{I}_{i_2j_2k_2}\rangle$, i.e.
\begin{equation*}
    {(b^{I}_{ijk})}_1 ={\left(\frac{1-2y-z}{4}\right)}^{i-1} {\left(\frac{1-z}{2}\right)}^{j} P_{i-2}^{(1,1)}{\left(\frac{4x}{1-2y-z}\right)} P_{j-1}^{(2i,0)} {\left(\frac{2y}{1-z}\right)} P_{k-1}^{(2i+2j,0)}(z)
\end{equation*}
For the second component we apply \cref{lemma:ProdIntJacLeg}.
Our aim is to build a function which becomes orthogonal to all $\tilde{w}_{ijk}^{II}$ and 
$\tilde{w}_{ijk}^{III}$. 
Note that both functions can be written as
\begin{equation*}
\widetilde{w}_{ijk}^{II/III}=r_{jk}^{(i)}(y,z) \begin{pmatrix}
            -\frac12 L_{i-2} {\left( \frac{4x}{1-2y-z}\right)} \\ L_{i-1} {\left( \frac{4x}{1-2y-z}\right)} \\ 0
            \end{pmatrix}+s_{jk}^{(i)}(y,z) \begin{pmatrix}
            -\frac14 L_{i-2} {\left( \frac{4x}{1-2y-z}\right)} \\ 0 \\ L_{i-1} {\left( \frac{4x}{1-2y-z}\right)} 
            \end{pmatrix}
\end{equation*}
with some polynomials $r_{jk}^{(i)}(y,z)$ and $s_{jk}^{(i)}(y,z)$, where $s_{jk}^{(i)}=0$ for functions of type $II$, see \eqref{eq:aux_def3}. 
Our aim is to use the corollary \ref{lemma:ProdIntJacLeg} for the integration in $x$-direction.
\begin{lemma}\label{lemma:Vektor}
With some polynomial $t_{jk}^{(i)}(y,z)$, let
\[
\widetilde{b}^{(h)}_{ijk}=t_{jk}^{(i)}(y,z)\begin{pmatrix}
    2i~P_{i-2}^{(1,1)}{\left(\frac{4x}{1-2y-z}\right)} & (i+1) P_{i-1}^{(1,1)}{\left(\frac{4x}{1-2y-z}\right)} & \frac{1}{2}(i+1) P_{i-1}^{(1,1)}{\left(\frac{4x}{1-2y-z}\right)}  \end{pmatrix}^\top.
\]
Then, $\langle \widetilde{w}_{i_1,j_1,k_1}^{II/III}, \widetilde{b}^{(h)}_{i_2,j_2,k_2}\rangle =0$ for all $i_2\geq 2$.
\end{lemma}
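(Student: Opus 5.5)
The plan is to expand the scalar product pointwise, integrate first in the $x$-direction, and reduce the $x$-integral to the first relation of \Cref{lemma:ProdIntJacLeg}. As noted before the lemma, every $\widetilde{w}^{II/III}_{i_1j_1k_1}$ can be written as
\[
r^{(i_1)}_{j_1k_1}(y,z)\,\vec{e}_1(\eta)+s^{(i_1)}_{j_1k_1}(y,z)\,\vec{e}_2(\eta),
\]
where $\vec{e}_1=\bigl(-\tfrac12 L_{i_1-2}(\eta),\,L_{i_1-1}(\eta),\,0\bigr)^\top$, $\vec{e}_2=\bigl(-\tfrac14 L_{i_1-2}(\eta),\,0,\,L_{i_1-1}(\eta)\bigr)^\top$ and $\eta=\frac{4x}{1-2y-z}$. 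The coefficients $r$ and $s$ depend only on $y$ and $z$; they contain the powers of $\omega_x$ and $\omega_y$, the $\chi$-dependent Jacobi factors, and the $z$-factors from \eqref{eq:aux_def} and \eqref{eq:aux_def3}. The test function is $\widetilde{b}^{(h)}_{i_2j_2k_2}=t^{(i_2)}_{j_2k_2}(y,z)\,\vec{c}(\eta)$, where $\vec{c}$ is the displayed vector of Jacobi polynomials in $\eta$. So the whole $x$-dependence of both factors enters only through $\eta$.

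Next we compute the two scalar products with $\vec{c}=\bigl(2i_2P^{(1,1)}_{i_2-2},\,(i_2+1)P^{(1,1)}_{i_2-1},\,\tfrac12(i_2+1)P^{(1,1)}_{i_2-1}\bigr)^\top$. This gives
\[
\vec{e}_1\cdot\vec{c}=-i_2P^{(1,1)}_{i_2-2}(\eta)L_{i_1-2}(\eta)+(i_2+1)P^{(1,1)}_{i_2-1}(\eta)L_{i_1-1}(\eta),\qquad
\vec{e}_2\cdot\vec{c}=\tfrac12\,\vec{e}_1\cdot\vec{c}.
\]
The factor $\tfrac12$ in the third component of $\vec{c}$ is what makes the two patterns collapse onto the same combination. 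Hence the integrand equals $t\,(r+\tfrac12 s)\,\vec{e}_1\cdot\vec{c}$.

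Then we apply the Duffy transformation, or simply substitute $x\mapsto\eta$ for fixed $(y,z)$. The Jacobian $\frac{1-2y-z}{4}=\omega_x$ does not depend on $\eta$, and $\eta$ ranges over $[-1,1]$. Therefore
\[
\langle\widetilde{w}^{II/III}_{i_1j_1k_1},\widetilde{b}^{(h)}_{i_2j_2k_2}\rangle
=\int \omega_x\, t\,\Bigl(r+\tfrac12 s\Bigr)\,\mathrm{d}y\,\mathrm{d}z\;\cdot\int_{-1}^1 \vec{e}_1(\eta)\cdot\vec{c}(\eta)\,\mathrm{d}\eta .
\]
The last integral is exactly the first relation of \Cref{lemma:ProdIntJacLeg} with $k=i_2\ge2$ and $i=i_1\ge2$, so it vanishes for all $i_1,i_2\ge2$, regardless of $j_1,k_1,j_2,k_2$.

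The only delicate point is bookkeeping. One must check that $r$ and $s$ indeed contain no hidden $x$-dependence. For type $III$ this uses the decomposition \eqref{eq:aux_def3}, in which the bracket $\bigl[\frac{y}{1-z}P_{j-1}^{(2i,0)}-\frac j2\phat_j^{2i}\bigr]$ depends only on $\chi$. One must also check that the substitution keeps the integrand separable, so that the $(y,z)$-integral is finite and is multiplied by zero. Everything else is the elementary dot-product computation above.
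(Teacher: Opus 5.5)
Your proposal is correct and follows the paper's proof. Like the paper, you substitute $\eta$ for $x$ at fixed $(y,z)$ (the Duffy step), which reduces everything to the $\eta$-integral $\int_{-1}^1\bigl(-i_2P^{(1,1)}_{i_2-2}L_{i_1-2}+(i_2+1)P^{(1,1)}_{i_2-1}L_{i_1-1}\bigr)\,\mathrm{d}\eta$, and this vanishes by the first relation of Corollary~\ref{lemma:ProdIntJacLeg}. Your identity $\vec{e}_2\cdot\vec{c}=\tfrac12\,\vec{e}_1\cdot\vec{c}$ also spells out the type-$III$ case $s^{(i)}_{jk}\neq 0$, which the paper covers only with ``by the same arguments''.
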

\begin{proof}
A direct computation gives
\begin{equation*}
\label{eq:BiortI+II}
\begin{aligned}
 \langle \widetilde{w}^{II}_{i_1,j_1,k_1} , \widetilde{b}^{(h)}_{i_2,j_2,k_2} \rangle &\overset{\textrm{Duffy}}{=}&&
 \int_{-1}^1 -\frac{1}{2} L_{i_1-2}(\eta) 2i_2~P_{i_2-2}^{(1,1)} (\eta) +  L_{i_1-1}(\eta) (i_2+1) P_{i_2-1}^{(1,1)} (\eta)\;\mathrm{d}\eta\\
 &&&\int_{-1}^1 g_1(\chi)\;\mathrm{d}\chi \int_{-1}^1 g_2(z)\;\mathrm{d}z \\
 &\overset{\textrm{Cor. \ref{lemma:ProdIntJacLeg}}}{=} & 0 &,
\end{aligned}
\end{equation*}
where $g_1$ and $g_2$ are some polynomials.
The general case with $s_{jk}^{(i)}\neq 0$ follows by the same arguments.
\end{proof}
\begin{remark}\label{rem:BiII+III}
The result is based on the first orthogonality relation of Corollary \ref{lemma:ProdIntJacLeg}.
Using the second relation with $L_1(\eta)=\eta$ and $L_0(\eta)=1$, 
also for polynomials of the form
\begin{equation}\label{eq5p12}
\widetilde{w}^{II/III}_{1jk}=r_{jk}^{(i)}(y,z) \begin{pmatrix}
            -\frac{\eta}{2} \\ 1 \\ 0
            \end{pmatrix}+s_{jk}^{(i)}(y,z) \begin{pmatrix}
            -\frac{\eta}{4}  \\ 0 \\ 1 
            \end{pmatrix}
\end{equation}
the result
$\langle \widetilde{w}_{1,j_1,k_1}^{II/III}, \widetilde{b}^{(h)}_{i_2,j_2,k_2}\rangle =0$ with $i_2\geq 2$ follows.
\end{remark}

The application of  Lemma \ref{lemma:Vektor} motivates the choice
\[
\widetilde{b}^{I}_{ijk}(x,y,z) = {\left(\frac{1-2y-z}{4}\right)}^{i-1} {\left(\frac{1-z}{2}\right)}^{j} P_{j-1}^{(2i,0)} {\left(\frac{2y}{1-z}\right)} P_{k-1}^{(2i+2j,0)}(z) \begin{pmatrix}
    2i~P_{i-2}^{(1,1)}{\left(\frac{4x}{1-2y-z}\right)} \\ (i+1) P_{i-1}^{(1,1)}{\left(\frac{4x}{1-2y-z}\right)} \\ \frac{1}{2}(i+1) P_{i-1}^{(1,1)}{\left(\frac{4x}{1-2y-z}\right)}  \end{pmatrix}.
\]
Finally, the functions are scaled such that $\langle \widetilde{w}^{m}_{ijk},\widetilde{b}^{m}_{ijk}\rangle$
does not depend on $m$. With $\eqref{Orthogonality}$, the resulting biorthogonal functions  (up to some factor) are given by the relations
\begin{equation}\label{eq:aux_bi}
\begin{aligned}
    -2\widetilde{b}^{I}_{ijk}(x,y,z) &= {\left(\frac{1-2y-z}{4}\right)}^{\checka{i-1}} {\left(\frac{1-z}{2}\right)}^{\checka{j}} P_{j-1}^{(2i,0)} {\left(\frac{2y}{1-z}\right)} P_{k-1}^{(2i+2j,0)}(z) \begin{pmatrix}
    2i~P_{i-2}^{(1,1)}{\left(\frac{4x}{1-2y-z}\right)} \\ (i+1) P_{i-1}^{(1,1)}{\left(\frac{4x}{1-2y-z}\right)} ,\\ \frac{1}{2}(i+1) P_{i-1}^{(1,1)}{\left(\frac{4x}{1-2y-z}\right)}  \end{pmatrix}\\
    \frac{\widetilde{b}^{II}_{ijk}(x,y,z)}{2i+j-1} &= {\left(\frac{1-2y-z}{4}\right)}^{i-1} {\left(\frac{1-z}{2}\right)}^{j} L_{i-1}{\left(\frac{4x}{1-2y-z}\right)}  P_{k-1}^{(2i+2j,0)}(z) \begin{pmatrix} 0 \\ P_{j-1}^{(2i-1,1)}\left(\frac{2y}{1-z}\right) \\ \tilde{R}_j(y,z)\end{pmatrix},\\
    \frac{\widetilde{b}^{III}_{ijk}(x,y,z)}{2i+2j+k-1} &= {\left(\frac{1-2y-z}{4}\right)}^{i} {\left(\frac{1-z}{2}\right)}^{j-1} L_{i-1}{\left(\frac{4x}{1-2y-z}\right)} P_{j-1}^{(2i,0)} {\left(\frac{2y}{1-z}\right)} P^{(2i+2j-1,1)}_{k-1}(z)\begin{pmatrix} 0 \\ 0 \\ 1 \end{pmatrix}
\end{aligned}
\end{equation}
with $\tilde{R}_j(y,z)=\frac{y}{1-z} P_{j-1}^{(2i-1,1)}{\left( \frac{2y}{1-z}\right)} - \frac{j}{2i+j-1}
\frac{1-2y-z}{2(1-z)} P_{j-1}^{(2i,0)}\left(\frac{2y}{1-z}\right)$.
We summarize the results in the following theorem:
\begin{theorem}
    The auxiliary basis function $\widetilde{w}^{I}_{ijk}(x,y,z),\widetilde{w}^{II}_{ijk}(x,y,z)$ and $\widetilde{w}^{III}_{ijk}(x,y,z)$ as in~\eqref{eq:aux_def} and the functions $\widetilde{b}^{I}_{ijk}(x,y,z),\widetilde{b}^{II}_{ijk}(x,y,z)$ and $\widetilde{b}^{III}_{ijk}(x,y,z)$ as in~\eqref{eq:aux_bi} are biorthogonal to each other, i.e.
    \begin{equation}\label{eq:biI-III}
        \langle \widetilde{w}^{m_1}_{i_1j_1k_1},\widetilde{b}^{m_2}_{i_2j_2k_2}\rangle = \int_\blacktriangle \tilde{w}^{m_1}_{i_1j_1k_1}(x,y,z) \cdot b^{m_2}_{i_2j_2k_2}(x,y,z)~\mathrm{d}\vec{x} = c_{i_1,j_1,k_1,m_1} \delta_{m_1,m_2} \delta_{i_1,i_2} \delta_{j_1,j_2} \delta_{k_1,k_2},
    \end{equation}
where $c_{ijkm}=\frac{16}{(2i-1)(2i+2j-1)(2i+2j+2k-1)}$ for $m=I,II,III$.
\end{theorem}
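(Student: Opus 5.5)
The biorthogonality relation \eqref{eq:biI-III} has to be checked for all nine pairs $(m_1,m_2)\in\{I,II,III\}^2$. I will organize the check by the vector structure of the functions. Each integral is reduced to a product of one-dimensional integrals by the Duffy transformation $(x,y,z)\mapsto(\eta,\chi,z)$. Its Jacobian $\omega_x\omega_y=\left(\frac{1-\chi}{2}\right)\left(\frac{1-z}{2}\right)^2$ (up to a constant) is absorbed into the powers of $\omega_x,\omega_y$. The one-dimensional integrals are then evaluated with \eqref{Orthogonality}, \eqref{Orthogonality2}, \eqref{dual:IntLeg}, Corollary \ref{lemma:ProdIntJacLeg} and Lemma \ref{lemma:Vektor}.

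\textbf{Zero pairs from the sparsity pattern.} Several pairs vanish because of which components are nonzero.
\begin{itemize}
\item $\widetilde{w}^I$ has only a first component, while $\widetilde{b}^{II}$ and $\widetilde{b}^{III}$ have vanishing first components. This gives $\langle\widetilde{w}^I,\widetilde{b}^{II}\rangle=\langle\widetilde{w}^I,\widetilde{b}^{III}\rangle=0$ at once.
\item $\widetilde{w}^{II}$ has no third component, so $\langle\widetilde{w}^{II},\widetilde{b}^{III}\rangle=0$.
\item The pairs $(I,I)$, $(II,II)$, $(III,III)$ and $(III,II)$ were treated in \eqref{Ortho:33} and \eqref{Ortho:22}. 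There I only recheck that the powers of $\omega_x,\omega_y$ match after the Duffy transform, so that the weights $(1-\chi)^{2i}$, $(1-z)^{2i+2j}$ (resp.\ $(1-\chi)^{2i-1}(1+\chi)$, $(1-z)^{2i+2j-1}(1+z)$) arise.
\item For $(I,I)$ only the first component of $\widetilde{b}^I$ matters. Writing $\lhat_{i_1}$ via \eqref{dual:IntLeg} as $\frac{\eta^2-1}{2(i_1-1)}P^{(1,1)}_{i_1-2}$, the $\eta$-integral becomes a $(1-\eta^2)$-weighted $P^{(1,1)}$ orthogonality. The $\chi$ and $z$ integrals are the same Jacobi orthogonalities as for $\widetilde{b}^{III}$ with weights $2i$ and $2i+2j$.
\end{itemize}

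\textbf{Remaining pairs.} The pairs $(II,I)$ and $(III,I)$ are exactly Lemma \ref{lemma:Vektor}: $\widetilde{b}^I$ has the form $\widetilde{b}^{(h)}$ with $t^{(i)}_{jk}$ equal to the $(y,z)$-prefactor. The $x$-integration therefore vanishes by the first identity of Corollary \ref{lemma:ProdIntJacLeg}, applied to both the $r$- and the $s$-part of the decomposition \eqref{eq:aux_def3}. One must check that the third-component weight $\frac12(i+1)$ pairs correctly with $-\frac14 L_{i-2}$, $L_{i-1}$ in the $s$-part. It does, since $-\frac14\cdot 2i$ and $1\cdot\frac12(i+1)$ are half of the $r$-part coefficients.

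For $(III,II)$ I will redo the reduction leading to the condition on $\widehat d$ with the closed form $\tilde R_j$. After the Duffy transform,
\[
\frac{y}{1-z}=\frac{\chi}{2},\qquad \frac{1-2y-z}{2(1-z)}=\frac{1-\chi}{2},
\]
so $\tilde R_j$ matches the ansatz for $\widehat d$ with $c_{ij}=\frac{j}{2i+j-1}$. The $\eta$-integral gives $\delta_{i_1i_2}$, and the $z$-factors of the two components agree. The $\chi$-integral then splits into a $(1-\chi)^{2i-1}(1+\chi)$-orthogonality of $P^{(2i-1,1)}$ and a $(1-\chi)^{2i}$-orthogonality of $P^{(2i,0)}$. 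These cancel for $j_1=j_2$ by the choice of $c_{ij}$, via \eqref{Orthogonality}, and vanish individually for $j_1\ne j_2$.

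\textbf{Constants.} Finally I compute the diagonal values. Each is a product of three normalisation constants from \eqref{Orthogonality}: $\frac{2}{2i-1}$ from the $\eta$-direction, $\frac{2^{2i+1}}{2i+2j-1}$-type from the $\chi$-direction, and $\frac{2^{2i+2j+1}}{2i+2j+2k-1}$-type from the $z$-direction. These are divided by the Duffy powers of $2$ and by the factors $2(i-1)$, $j$, $k$ from \eqref{dual:IntLeg}, and multiplied by the prefactors $-2$, $2i+j-1$, $2i+2j+k-1$ in \eqref{eq:aux_bi}. The prefactors were chosen exactly so that the $m$-dependent factors $\frac{(n+1)}{(n+\alpha+1)}$ from the $\beta=1$ case of \eqref{Orthogonality} cancel, yielding $\frac{16}{(2i-1)(2i+2j-1)(2i+2j+2k-1)}$.

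\textbf{Main obstacle.} I expect the bookkeeping of powers of $2$ and of $(1-\chi)/2$, $(1-z)/2$ in this last step to be the most error-prone. I will carry it out uniformly for each $m$, checking first the $\eta$- and $\chi$-integrals and then the $z$-integral.
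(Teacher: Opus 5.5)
Your proposal is correct and follows the paper's argument. The zero pairs come from the component structure, $(II,I)$ and $(III,I)$ come from Lemma~\ref{lemma:Vektor} via the first identity of Corollary~\ref{lemma:ProdIntJacLeg} (your check of the $s$-part is a useful addition), $(III,II)$ comes from the cancellation built into $\tilde R_j$ with $c_{ij}=\frac{j}{2i+j-1}$, and the diagonal values come from the Duffy factorisation with \eqref{Orthogonality} and \eqref{dual:IntLeg}.

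There is one bookkeeping slip in the type-$I$ constant. Since \eqref{eq:aux_bi} displays $-2\widetilde{b}^I$, the type-$I$ factor is $-\frac12$, not $-2$. Moreover, for $m=I$ the $\eta$-integral is $\int\lhat_i\,2iP^{(1,1)}_{i-2}\,\mathrm{d}\eta=-\frac{8}{2i-1}$, not $\frac{2}{2i-1}$. The product is then $\frac{4}{2i-1}\cdot\frac{2}{2i+2j-1}\cdot\frac{2}{2i+2j+2k-1}$, which gives the claimed $\frac{16}{(2i-1)(2i+2j-1)(2i+2j+2k-1)}$. For $m=II,III$ the corresponding extra factor $2$ comes instead from the $\beta=1$ integral in $\chi$ and $z$, respectively.
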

\begin{proof}
The result follows from the previous arguments, see \eqref{Ortho:22}, \eqref{Ortho:33}.
\end{proof}

We still need to show that the auxiliary basis functions $\widetilde{w}^{I}_{1jk},\widetilde{w}^{IV}_{1jk}$  and $w^\blacktriangle_{10k}$ are biorthogonal to the basis functions $\widetilde{b}^I_{ijk},\widetilde{b}^{II}_{ijk}, \widetilde{b}^{III}_{ijk}$.
Note that the lowest order Raviart-Thomas functions are given by 
\begin{equation*}
    \begin{aligned}
               w_1^{\blacktriangle,F_1}(x,y,z)&=-\frac18 \begin{pmatrix}-x\\-y\\ 1-z \end{pmatrix} =  \frac18 \frac{1-z}{2}\left(\chi \begin{pmatrix} -\frac12\eta \\ 1 \\0 \end{pmatrix}+
        2\begin{pmatrix} \frac14 \eta \\ 0 \\ -1\end{pmatrix}
        \right)
    \end{aligned}.
\end{equation*}
The following result will be used several times.
\begin{lemma}
Let 
\begin{equation}\label{eq5p14}
\widetilde{w}^{(h)}_{ijk}(x,y,z)=\begin{pmatrix} g_{1,ijk}(x,y,z) \\ g_{2,ijk}(y,z) \\ g_{3,ijk}(y,z)\end{pmatrix}
\end{equation}
be some polynomial, where the $x$ dependence is only in the first direction.
Then, we have
\begin{equation}\label{eq5p13}
\langle \widetilde{w}^{(h)}_{i_1,j_1,k_1}, \widetilde{b}_{i_2,j_2,k_2}^{II,III} \rangle = 0
\quad\textrm{for all } i_2\geq 2.
\end{equation}
\end{lemma}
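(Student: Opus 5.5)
The plan is to exploit the fact that, for $i_2\geq 2$, every component of $\widetilde{b}^{II}_{i_2j_2k_2}$ and $\widetilde{b}^{III}_{i_2j_2k_2}$ in \eqref{eq:aux_bi} carries the factor $\omega_x^{i_2-1}L_{i_2-1}(\eta)$ (or $\omega_x^{i_2}L_{i_2-1}(\eta)$ for type $III$), while the first component vanishes identically. The first component $g_{1,ijk}(x,y,z)$ of $\widetilde{w}^{(h)}$ therefore never enters the scalar product. Only the second and third components contribute, and by assumption these depend on $(y,z)$ alone.

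Concretely, I will write
\[
\langle \widetilde{w}^{(h)}_{i_1j_1k_1},\widetilde{b}^{m}_{i_2j_2k_2}\rangle=\int_\blacktriangle g_{2,i_1j_1k_1}(y,z)\,(\widetilde{b}^m_{i_2j_2k_2})_2+g_{3,i_1j_1k_1}(y,z)\,(\widetilde{b}^m_{i_2j_2k_2})_3\,\mathrm{d}\vec{r},\qquad m\in\{II,III\}.
\]
Next I apply the Duffy transformation $(x,y,z)\mapsto(\eta,\chi,z)$, as in the proof of \eqref{Ortho:33}. The Jacobian is a product of powers of $\omega_x$ and $\omega_y$, which are functions of $(\chi,z)$ only. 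The integrand then factors: the only $\eta$-dependent term is $L_{i_2-1}(\eta)$, and everything else (including $g_2,g_3$, the remaining Jacobi factors, $\tilde R_{j_2}(y,z)$ and the Jacobian) is a function of $(\chi,z)$. Hence the integral contains the factor
\[
\int_{-1}^1 L_{i_2-1}(\eta)\,\mathrm{d}\eta=\int_{-1}^1 L_{i_2-1}(\eta)L_0(\eta)\,\mathrm{d}\eta=0,
\]
since $i_2-1\geq 1$, by the orthogonality \eqref{Orthogonality} of the Legendre polynomials. This proves \eqref{eq5p13}.

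The main point needing care, and the step I would check first, is the claim that no hidden $\eta$-dependence survives in the $(\chi,z)$ factors. I must verify that $\tilde R_j(y,z)$ and the weights $\omega_x^{i_2-1}$, $\omega_x^{i_2}$ become, after the substitution $y=\chi(1-z)/2$, functions of $(\chi,z)$ only. This holds because $\omega_x=\frac{1-\chi}{2}\,\omega_y$. I must also check that the product of polynomial and weight stays integrable, which is immediate since everything is polynomial on the simplex. With that settled, the separation of variables is exact and the conclusion follows for every $i_1,j_1,k_1$. In particular, it applies to $w_1^{\blacktriangle,F_1}\phat^4_k$ and to $\widetilde{w}^{IV}_{1jk}$ in \eqref{eq:twIV}, which have exactly the form \eqref{eq5p14}.
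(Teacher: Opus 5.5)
Your proposal is correct and follows the paper's own argument. After the Duffy transformation, the only $\eta$-dependence left in the integrand is the factor $L_{i_2-1}(\eta)$, since the first component of $\widetilde{b}^{II,III}$ vanishes and so $g_1$ never enters; the result then follows from $\int_{-1}^1 L_{i_2-1}(\eta)\,\mathrm{d}\eta=0$ for $i_2\geq 2$. Your version is slightly more careful than the paper's, which writes the remaining $(\chi,z)$-integral as a product of one-dimensional integrals, whereas you correctly note that only the splitting-off of the $\eta$-factor is needed.
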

\begin{proof}
The result follows from the Duffy trick
\[
\langle \widetilde{w}^{(h)}_{i_1,j_1,k_1}, \widetilde{b}_{i_2,j_2,k_2}^{II,III} \rangle = 
\int_{-1}^1 L_{i_2-1}(\eta) \;\mathrm{d}\eta \int_{-1}^1 g_4(\chi) \;\mathrm{d}\chi
\int_{-1}^1 g_5(z) \;\mathrm{d}z=0
\]
due the orthogonality of the Legendre polynomials with some polynomials $g_5$ and $g_4$.
\end{proof}
Now, the orthogonalities can be proved:
\begin{itemize}
\item We start  with the orthogonality of $w^\blacktriangle_{10k}$ to \eqref{eq:aux_bi}.
The orthogonality to the functions of types $II$ and $III$ follows from \eqref{eq5p13} since
$w^\blacktriangle_{10k}=\hat{P}_k^3(z) w_1^{\blacktriangle,F_1}(x,y,z)$ is of the form \eqref{eq5p14}.
Due to the form of $w_1^{\blacktriangle,F_1}(x,y,z)$, the basis function $w^\blacktriangle_{1,0,k}$
is also of the form \eqref{eq5p12}. The orthogonality to the biorthogonal functions of type $I$ 
is a direct consequence of Remark \ref{rem:BiII+III}.
\item The orthogonality of $\widetilde{w}^{IV}_{1jk}$ follows by similar arguments.
Using \eqref{def:tildew1jk}, it is of the form \eqref{eq5p14}.
The orthogonality to the functions of types $II$ and $III$ follows again from \eqref{eq5p13}.
Concerning the orthogonality to the functions of type $I$, we note that 
\begin{equation}\label{eq5p15}
\begin{pmatrix} 0 \\-1 \\ 2  \end{pmatrix} \cdot \begin{pmatrix}
    2i~P_{i-2}^{(1,1)}(\eta) \\ (i+1) P_{i-1}^{(1,1)}(\eta) \\ \frac{1}{2}(i+1) P_{i-1}^{(1,1)}(\eta )\end{pmatrix} =0.
\end{equation}
Moreover, the second summand is  of the form \eqref{eq5p12}. The orthogonality to the biorthogonal functions of type $I$ 
is a direct consequence of Remark \ref{rem:BiII+III} together with \eqref{eq5p15}.
\item It remains to check the orthogonality of $\widetilde{w}^{I}_{1jk}$ to \eqref{eq:aux_bi}
which follows by the same arguments.
\end{itemize}

It remains to write down the low order functions dual to $w^{\blacktriangle}_{10k}$ and
$\widetilde{w}_{1jk}^{I/IV}$. Both are of the form
\[
\widetilde{w}^{(h)}_{ijk}(x,y,z)=\begin{pmatrix} 0 \\ g_{2,ijk}(y,z) \\ g_{3,ijk}(y,z)\end{pmatrix}
\]
which gives the orthogonality to all $\widetilde{b}^{\ell}_{ijk}$ with $i\geq 2$, $\ell\in\{I,II,III\}$.
The dual functions to the $w^{\blacktriangle}_{10k}$ are given by $\widetilde{b}_{10k}^{II}$ in \eqref{eq:aux_bi}.
The other functions are not special cases of the functions \eqref{eq:aux_bi}.
Here, some modifications in the weights are necessary.
Namely, we introduce our dual functions via the expressions
\begin{equation*}
\begin{aligned}
\widetilde{b}_{1jk}^{IV}(x,y,z)&= (2j+k+2) P_{j}^{(1,0)}\left(\frac{2y}{1-z}\right) \left(\frac{1-z}{2}\right)^j P_{k-1}^{(2j+2,1)}(z)\begin{pmatrix}0\\0\\1
\end{pmatrix} 
\quad \textrm{and} \quad \\
\widetilde{b}_{1jk}^I&=(j+2)\left(\frac{1-z}{2}\right)^{j} p_{k-1}^{(2j+3,0)}(z) 
\begin{pmatrix} 0 \\  P_{j-1}^{(2,1)}\left(\frac{2y}{1-z}\right) \\
 \frac12 P_{j-1}^{(2,1)}\left(\frac{2y}{1-z}\right)+ P_{j}^{(1,1)}\left(\frac{2y}{1-z}\right)\end{pmatrix}.
 \end{aligned}
\end{equation*}
\begin{itemize}
    \item 
    The orthogonality to the basis functions $\widetilde{w}_{ijk}^{\ell}$, $\ell=I,II,III$, $i\geq 2$
follows by the orthogonality of the Legendre polynomials.
\item By construction, the functions $\widetilde{b}_{1jk}^{IV}$ are orthogonal to $\widetilde{w}_{1j'k'}^{I}$.
\item The relation
\begin{equation}\label{eq:biIV}
\langle \widetilde{b}_{1jk}^{IV}, \widetilde{w}_{1j'k'}^{IV} \rangle=
-\frac{2}{(j+1)(j+k+1)}\delta_{j,j'}\delta_{k,k'}
\end{equation}
follows from the definition of $\widetilde{b}_{1jk}^I$, \eqref{eq:twIV} and the orthogonality relations.
\item In the same way, 
\begin{equation}\label{eq:biI}
\langle \widetilde{b}_{1jk}^{I}, \widetilde{w}_{1j'k'}^{I} \rangle= -\frac{2}{(j+1)(j+k+1)}\delta_{j,j'}\delta_{k,k'}
\end{equation}
is proved.
\item It remains to check $\langle \widetilde{b}_{1jk}^{I}, \widetilde{w}_{1j'k'}^{IV} \rangle=0$.
Here, using the definition of the functions and the Duffy trick with $\chi=\frac{2y}{1-z}$ gives
a factorization into the one dimensional integrals
\begin{eqnarray*}
\langle \widetilde{b}_{1jk}^{I}, \widetilde{w}_{1j'k'}^{IV} \rangle&=& S_{j,j'}
\int_{-1}^1 \phat_{k'}^{2j'+3}(z) p_{k-1}^{(2j+3,0)}(z)  (1-z)^{j+j'+2} \dz 
\end{eqnarray*}
with
\begin{eqnarray*}
S_{j,l}&=&\int_{-1}^1 (1-\chi) \left(\frac{2}{2l+1} \left( 2(l+1)P_{l-1}^{(1,0)}(\chi)+P_{l}^{(1,0)}(\chi) \right) 
P_{j-1}^{(2,1)}(\chi)\right.\\
&&\left.-4P_{l}^{(1,0)}(\chi) \left(\frac12 P_{j-1}^{(2,1)}+P_j^{(1,1)}(\chi) \right) \right)
\;\mathrm{d}\chi \\
&=& \frac{4}{2l+1} \int_{-1}^1 (1-\chi) \underbrace{\left((l+1)P_{l-1}^{(1,0)}(\chi)-l P_{l}^{(1,0)}(\chi)\right)}_{=\frac{2l+1}{2} (1-\chi)P_{l-1}^{(2,0)} } P_{j-1}^{(2,1)}(\chi) \;\mathrm{d}\chi \\
&&-4 \int_{-1}^1 (1-\chi) P_{l}^{(1,0)}(\chi) P_{j}^{(1,1)}(\chi) \;\mathrm{d}\chi \\
&=&  2\int_{-1}^1 (1-\chi)^2 P_{l-1}^{(2,0)}(\chi) P_{j-1}^{(2,1)}(\chi) \;\mathrm{d}\chi 
-4 \int_{-1}^1 (1-\chi) P_{l}^{(1,0)}(\chi) P_{j}^{(1,1)}(\chi) \;\mathrm{d}\chi \\
&=& 0.
\end{eqnarray*}
by using  \eqref{eq:NedJac} and Corollary \ref{folg}.
This proves the result $\langle \widetilde{b}_{1jk}^{I}, \widetilde{w}_{1j'k'}^{IV} \rangle=0$.
\end{itemize}
In a last step, we have to determine the biorthogonal functions of 
the original basis to \eqref{Dual:TetBasisJac}.
This can be done with the aid of Lemma 4.3 of \cite{haubold2024}.
Note that
\[
\begin{pmatrix} w^{\blacktriangle,I}_{ijk} \\ w^{\blacktriangle,II}_{ijk} \\ w^{\blacktriangle,III}_{ijk}
\end{pmatrix}=\begin{pmatrix} i & j & k \\ i &j & -(i+j) \\ -i & i+k & -k\end{pmatrix}
\begin{pmatrix}  \widetilde{w}_{ijk}^{I}\\  \widetilde{w}_{ijk}^{II} \\  \widetilde{w}_{ijk}^{III} \end{pmatrix}
\quad\textrm{and}\quad
\begin{pmatrix} w^{\blacktriangle,I}_{1jk} \\ w^{\blacktriangle,IV}_{1jk} \end{pmatrix}=\begin{pmatrix} j+2 & -k
\\ 1 &1 \end{pmatrix}
\begin{pmatrix}  \widetilde{w}_{1jk}^{I}\\  \widetilde{w}_{1jk}^{IV} 
\end{pmatrix}
\]
Using the scaling \eqref{eq:biI-III} and \eqref{eq:biI}, \eqref{eq:biIV}, we introduce
\begin{equation}\label{eq:finalBi}
\begin{pmatrix}
b^{\blacktriangle,I}_{ijk} \\ b^{\blacktriangle,II}_{ijk} \\
b^{\blacktriangle,III}_{ijk}\end{pmatrix}
=\tilde{c}_{i,j,k}\begin{pmatrix}
i & i & i \\
k & 0 & -i \\
-j & i & 0
\end{pmatrix}
\begin{pmatrix}
\widetilde{b}_{ijk}^{I} \\ \widetilde{b}_{ijk}^{II} \\ \widetilde{b}_{ijk}^{III}
\end{pmatrix}
\textrm{ and }
\begin{pmatrix}
b^{\blacktriangle,I}_{1jk} \\ b^{\blacktriangle,IV}_{1jk} \end{pmatrix}
=-\frac{(j+1)(j+k+1)}{2(j+k+2)}\begin{pmatrix}
    1 & -1\\
    k & j+2 
\end{pmatrix}
\begin{pmatrix}
\widetilde{b}_{ijk}^{I} \\ \widetilde{b}_{ijk}^{IV} 
\end{pmatrix}
\end{equation}
with the values $\tilde{c}_{i,j,k}=\frac{(2i-1)(2i+2j-1)(2i+2j+2k-1)}{16i(i+j+k)}$.
We summarize all observations in the following theorem.
\begin{theorem}
Let $b_{1jk}^{\blacktriangle,\ell}$, $\ell=I,IV$ and
$b^{\blacktriangle,\ell}_{ijk}(x,y,z)$, $\ell=I,II,III$ be defined by ~\eqref{eq:finalBi}.
Finally, let $-\frac{(2k+3)(k+3)}{8}b_{10k}^{\blacktriangle}=p_{k-1}^{(3,1)}(z) \begin{pmatrix} 0 \\ 0 \\1  \end{pmatrix}$.
Then, they are biorthogonal to the interior basis functions defined by \eqref{eq:DefHdiv}.
\end{theorem}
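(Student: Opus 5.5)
The proof reduces everything to the auxiliary biorthogonality already established, followed by a block-wise linear-algebra step in the spirit of Lemma 4.3 of \cite{haubold2024}. Write the interior primal functions \eqref{eq:DefHdiv} as $W=M\widetilde W$ and the duals as $B=N\widetilde B$. Here $M$ is block diagonal, with one $3\times 3$ block $M_{ijk}$ for each triple $i\ge 2$ and one $2\times 2$ block $M_{1jk}$ for each pair $(j,k)$ with $i=1$; these are exactly the matrices displayed before \eqref{eq:finalBi}. The Gram matrix is then $\langle W,B\rangle = M\,G\,N^\top$ with $G=\langle\widetilde W,\widetilde B\rangle$. The task is to show that $G$ is block diagonal with the correct blocks, and that $N^\top = G^{-1}M^{-1}$ up to the normalisation.

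\textbf{Step 1: structure of $G$.} For $i_1,i_2\ge 2$ the auxiliary theorem \eqref{eq:biI-III} gives $G = c_{ijk}\,I_3$ on each block and zero off the block diagonal. For the mixed pairs, with $i=1$ on one side and $i\ge 2$ on the other, the itemized argument applies. Equation \eqref{eq5p13} together with Remark \ref{rem:BiII+III} and \eqref{eq5p15} shows that $\widetilde w^{I}_{1jk}$, $\widetilde w^{IV}_{1jk}$ and $w_{10k}$ are orthogonal to all $\widetilde b^{\ell}_{i_2j_2k_2}$ with $i_2\ge 2$. Conversely, the low-order duals $\widetilde b^{I/IV}_{1jk}$ and $b_{10k}$ have vanishing first component and no $x$-dependence, so after the Duffy transform the $\eta$-integral is $\int L_{i-1}(\eta)\,\mathrm d\eta=0$ (or $\int L_{i-2}P^{(1,1)}\ldots$ with the first component zero) against every $\widetilde w^\ell_{ijk}$ with $i\ge2$. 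Within the $i=1$ block, \eqref{eq:biIV} and \eqref{eq:biI} give the diagonal entries $-\tfrac{2}{(j+1)(j+k+1)}$. The cross terms vanish: $\langle \widetilde b^I_{1jk},\widetilde w^{IV}_{1j'k'}\rangle=0$ by Corollary \ref{folg}, and $\langle\widetilde b^{IV}_{1jk},\widetilde w^{I}_{1j'k'}\rangle=0$ because $\widetilde w^I_{1jk}$ has no third component.

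\textbf{Step 2: the $w_{10k}$ family.} This needs a separate check. The dual $p^{(3,1)}_{k-1}(z)e_3$ paired with $4w_1^{F_1}\widehat P^4_k(z)$ reduces via \eqref{dual:IntLeg} to a weighted $P^{(3,1)}$-orthogonality in $z$, which gives the constant $-\tfrac{8}{(2k+3)(k+3)}$. Its pairing with the other $i=1$ functions must also vanish. Against $\widetilde w^I_{1jk}$ this is immediate, since there is no third component. Against $\widetilde w^{IV}_{1jk}$, the third component is $-4P^{(1,0)}_j(\chi)\,\omega_y^j$, and the $\chi$-integral against a constant vanishes by Jacobi orthogonality for $j\ge1$. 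Conversely, $\widetilde b^{I/IV}_{1jk}$ paired with $w_{10k'}$ vanishes by orthogonality in $\chi$, using $j\ge 1$.

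\textbf{Step 3: the matrix identity.} Verify blockwise that
\[
\tilde c_{i,j,k}\,c_{ijk}\begin{pmatrix} i & j & k \\ i &j & -(i+j) \\ -i & i+k & -k\end{pmatrix}\begin{pmatrix} i & k & -j\\ i & 0 & i\\ i & -i & 0\end{pmatrix}= I_3 ,
\]
and similarly for the $2\times2$ block with prefactor $-\tfrac{(j+1)(j+k+1)}{2(j+k+2)}$. Direct multiplication gives diagonal entries $i(i+j+k)$ and zero off-diagonal entries; for example, the $(1,2)$ entry is $ik-ik=0$. Combined with $\tilde c\,c=\tfrac1{i(i+j+k)}$, this gives the identity. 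This step is routine.

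The main obstacle is Step 1 for the $i=1$ block, in particular making sure that the representation \eqref{eq:twIV} of $\widetilde w^{IV}_{1jk}$ is consistent with the weights used in $\widetilde b^{IV}_{1jk}$, so that the $z$-integrals factor correctly. I would first redo the Duffy factorization for $\langle\widetilde b^{IV}_{1jk},\widetilde w^{IV}_{1j'k'}\rangle$ explicitly. The $\chi$-part is the $P_j^{(1,0)}$ norm with weight $(1-\chi)$. The $z$-part is handled by \eqref{dual:IntLeg}, which converts $\widehat P^{2j+3}_{k'}$ into $(1+z)P^{(2j+2,1)}_{k'-1}$, after which orthogonality gives $\delta_{kk'}$.
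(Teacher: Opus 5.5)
Your overall route is the paper's: establish the auxiliary biorthogonality, then invert the block transformations, which the paper does by citing Lemma 4.3 of \cite{haubold2024}. Your explicit checks are correct: $M_{ijk}N_{ijk}^\top=i(i+j+k)I_3$, $M_{1jk}N_{1jk}^\top=(j+k+2)I_2$, $\tilde c_{i,j,k}c_{ijk}=\frac{1}{i(i+j+k)}$, and the diagonal values \eqref{eq:biI}, \eqref{eq:biIV}. You are also right that the $w_{10k}$ cross terms need their own check, which the paper leaves implicit.

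\textbf{Gap in Step 2.} One step of that check fails as you justify it. You say $\langle \widetilde b^{I}_{1jk}, w_{10k'}\rangle$ vanishes ``by orthogonality in $\chi$''. Use $w_{10k'}=\frac12\phat^4_{k'}(z)\,(x,\,y,\,-(1-z))^\top$, $y=\omega_y\chi$, $1-z=2\omega_y$, and the Duffy Jacobian $\omega_y^2\frac{1-\chi}{2}$. The $\chi$-factor of this pairing is then
\[
-\frac14\int_{-1}^1 (1-\chi)^2 P^{(2,1)}_{j-1}(\chi)\,\mathrm{d}\chi-\frac12\int_{-1}^1(1-\chi)P^{(1,1)}_{j}(\chi)\,\mathrm{d}\chi .
\]
These weights lack the factor $(1+\chi)$, so Jacobi orthogonality against constants does not apply. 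In fact both integrals are nonzero. By the lemma preceding Corollary~\ref{folg}, taking $P_0^{(\alpha,0)}=1$ with $\alpha=2$ and $\alpha=1$, they equal $\frac{8(-1)^{j-1}}{j+2}$ and $\frac{4(-1)^{j}}{j+2}$. The pairing vanishes only through the cancellation $\frac{2(-1)^j}{j+2}-\frac{2(-1)^j}{j+2}=0$. That cancellation comes from the specific third component $\frac12P^{(2,1)}_{j-1}+P^{(1,1)}_j$ of $\widetilde b^{I}_{1jk}$. Replace your one-line justification with this computation. For $\widetilde b^{IV}_{1jk}$ your argument is fine, since there the $\chi$-factor is $\int_{-1}^1\frac{1-\chi}{2}P^{(1,0)}_j\,\mathrm{d}\chi=0$ for $j\ge 1$.

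\textbf{Normalization of $b_{10k}$.} Your constant $\langle w_{10k},p^{(3,1)}_{k-1}e_3\rangle=-\frac{8}{(2k+3)(k+3)}$ is correct. With the scaling as written in the statement, though, it gives $\langle w_{10k},b_{10k}\rangle=\frac{64}{(2k+3)^2(k+3)^2}$. The exactly dual function is $b_{10k}=-\frac{(2k+3)(k+3)}{8}\,p^{(3,1)}_{k-1}e_3$, so the factor in the statement is inverted. You should either say so or state that this family is biorthogonal only up to that scaling.
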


\begin{remark}
    The choice of the Jacobi weights in \eqref{eq:aux_bi} is minimal. 
    The optimal sparsity choice of the Jacobi weights $2i-1$ and $2i+2j-2$ in \eqref{eq:aux_def}
    implies an reduction of the exponent of the weight functions in \eqref{eq:aux_bi}. But this would also lead to the appearance of poles in \eqref{eq:aux_bi} and thus the biorthogonal functions would become rational. In this case, to the knowledge of the authors, there is no biorthogonal polynomial in closed form.\\
    Using the sparsity optimal choice and \eqref{eq:finalBi} leads to a very sparse system. 
\end{remark}

\bibliographystyle{plain}
\bibliography{Biorthogonal}

\end{document}